\documentclass{article}

\usepackage{microtype}
\usepackage{graphicx}
\usepackage{subcaption}
\usepackage{booktabs} 

\usepackage{hyperref}



\usepackage[accepted]{icml2026}

\usepackage{amsmath}
\usepackage{amssymb}
\usepackage{mathtools}
\usepackage{amsthm}
\usepackage{amsfonts}
\usepackage{makecell}

\icmltitlerunning{Adaptive Sharpness-Aware Minimization with a Polyak-type Step size}

\newcommand{\R}{\mathbb{R}}
\newcommand{\N}{\mathbb{N}}
\newcommand{\E}{\mathbb{E}}
\renewcommand{\O}{\mathcal{O}}

\usepackage{color}
\usepackage[dvipsnames]{xcolor}

\usepackage{colortbl}
\definecolor{bgcolor}{rgb}{0.8,1,1}
\definecolor{bgcolor2}{rgb}{0.8,1,0.8}
\definecolor{niceblue}{rgb}{0.0,0.19,0.56}

\usepackage{mdframed}
\usepackage[colorinlistoftodos,bordercolor=orange,backgroundcolor=orange!20,linecolor=orange,textsize=scriptsize]{todonotes}

\usepackage{thmtools}
\allowdisplaybreaks[4]

\definecolor{shadecolor}{gray}{0.9}
\declaretheoremstyle[
headfont=\normalfont\bfseries,
notefont=\mdseries, notebraces={(}{)},
bodyfont=\normalfont,
postheadspace=0.5em,
spaceabove=1pt,
mdframed={
  skipabove=8pt,
  skipbelow=8pt,
  hidealllines=true,
  backgroundcolor={shadecolor},
  innerleftmargin=4pt,
  innerrightmargin=4pt}
]{shaded}

\declaretheorem[style=shaded,within=section]{definition}
\declaretheorem[style=shaded,sibling=definition]{theorem}
\declaretheorem[style=shaded,sibling=definition]{proposition}

\declaretheorem[style=shaded,sibling=definition]{corollary}

\declaretheorem[style=shaded,sibling=definition]{lemma}
\declaretheorem[style=shaded,sibling=definition]{remark}

\usepackage{pifont}

\usepackage{bm}
\usepackage{tikz}
\usepackage{float}
\usepackage{csquotes}
\usepackage{enumerate}
\usepackage{enumitem}
\usepackage{wrapfig}
\usepackage{multirow}
\usepackage{aligned-overset}

\renewcommand{\a}{\alpha}

\newcommand{\g}{\gamma}

\newcommand{\e}{\varepsilon} 

\renewcommand{\k}{\kappa}
\renewcommand{\l}{\lambda}
\newcommand{\m}{\mu}

\renewcommand{\r}{\rho}
\newcommand{\s}{\sigma}
\renewcommand{\t}{\tau}

\newcommand{\A}{\Alpha}

\newcommand{\q}{\quad}

\usepackage[capitalize,noabbrev]{cleveref}

\begin{document}

\twocolumn[
  \icmltitle{Adaptive Sharpness-Aware Minimization with a Polyak-type Step size: \\A Theory-Grounded Scheduler}
  
  \icmlsetsymbol{equal}{*}

  \begin{icmlauthorlist}
    \icmlauthor{Dimitris Oikonomou}{minds,cs}
    \icmlauthor{Nicolas Loizou}{minds,ams}
  \end{icmlauthorlist}

  \icmlaffiliation{minds}{Mathematical Institute for Data Science (MINDS), Johns Hopkins University, Baltimore, MD, USA}
  \icmlaffiliation{cs}{Department of Computer Science, Johns Hopkins University, Baltimore, MD, USA}
  \icmlaffiliation{ams}{Department of Applied Mathematics and Statistics, Johns Hopkins University, Baltimore, MD, USA}

  \icmlcorrespondingauthor{Dimitris Oikonomou, Nicolas Loizou}{doikono1@jh.edu, nloizou@jhu.edu}


  \vskip 0.3in
]

\printAffiliationsAndNotice{}  

\begin{abstract}
    Sharpness-Aware Minimization (SAM) has established itself as a powerful and widely adopted optimizer for training machine learning models. By explicitly minimizing the sharpness of the loss landscape, SAM often improves generalization while delivering strong empirical performance. However, SAM and its variants, like most training algorithms, are sensitive to the choice of learning rate, which is typically selected through extensive hyperparameter tuning or predefined schedulers. In this work, motivated by recent advances on the effectiveness of stochastic Polyak step sizes for Stochastic Gradient Descent (SGD), we derive Polyak schedulers tailored to SAM-style updates, yielding novel adaptive algorithms in both deterministic and stochastic settings. In the smooth setting, we prove linear convergence for strongly convex objectives and an $\mathcal{O}(1/T)$ convergence rate for convex objectives in the deterministic case. In the stochastic setting, we establish analogous convergence guarantees up to a neighborhood of the optimum. Numerical experiments demonstrate that the proposed Polyak schedulers achieve performance comparable to or better than carefully tuned SAM baselines, while substantially reducing the need for learning-rate tuning.
\end{abstract}

\section{Introduction}
\label{sec:intro}
A central challenge in modern machine learning is to explain and predict the generalization behavior of Deep Neural Networks (DNNs) \citep{zhang2016understanding,hardt2016train,neyshabur2017exploring,wilson2017marginal,neyshabur2018towards,zhang2021understanding}. In the deep learning regimes that arise in practice, the empirical risk landscape often contains many stationary points that fit the training data \citep{liu2020bad}. While these solutions may achieve similarly low training loss, their performance on unseen data can differ substantially. This suggests that the choice of optimization algorithm can influence which solution is reached and, consequently, the model’s generalization performance~\citep{foret2021sharpness}.

A useful way to view this phenomenon is through the local geometry of the loss landscape: empirical evidence indicates that the sharpness of the training loss, i.e., how much the loss changes under small perturbations of the parameters, often correlates with generalization performance \citep{keskar2016large,dziugaite2018entropy,jiang2019fantastic,singh2025avoiding}. This observation has motivated the development of methods that seek to control or reduce sharpness as a means to improve generalization \citep{wu2020adversarial,foret2021sharpness,zheng2021regularizing,andriushchenko2023sharpness,xie2024improving,xie2024sampa,tahmasebi2024universal}. Sharpness-Aware Minimization (SAM) and related variants are prominent examples of such algorithms~\citep{foret2021sharpness} which avoid sharp minima by evaluating the loss under a small perturbation of the weights in each iteration and then take a step that improves this perturbed objective. In many settings, SAM-style updates yield better generalization across architectures and benchmarks, but they remain highly sensitive to the choice of the learning rate. 

In this work, we focus on alleviating this issue and designing SAM variants that do not require tuning their learning rate. In particular, we consider both deterministic and stochastic optimization problems (formulation of the training objective). In the deterministic setting, our goal is to minimize the objective function
\begin{align}
    \label{eq:det-main-prob}
    \min_{x\in\R^d} f(x),
\end{align}
where $f:\R^d\to\R$ is differentiable, lower bounded, $L$-smooth, and convex (or $\m$-strongly convex). In the finite-sum setting (empirical risk minimization), the function $f$ has the form
\begin{align}
    \label{eq:stoch-main-prob}
    f(x)=\frac{1}{n}\sum_{i=1}^nf_i(x),
\end{align}
where each component function $f_i:\R^d\to\R$ is differentiable, lower bounded, convex, and $L_i$-smooth. Let $X^*$ denote the set of minimizers of $f$. Throughout this work, we assume that $X^*\neq\emptyset$ and let $x^*\in X^*$ with $f^*=f(x^*)=\min_{x\in\R^d}f(x)$. The formulation \eqref{eq:stoch-main-prob} is the cornerstone of many machine learning tasks \citep{hastie2009elements}, where the vector $x$ represents the model parameters, $f_i(x)$ is the loss associated with the training point $i$, and the goal is to minimize the empirical risk $f(x)$ across all training points. 

To solve the empirical risk minimization problem \eqref{eq:stoch-main-prob}, \citet{foret2021sharpness} introduced the empirical sharpness at $x$ as $\max_{\|\e\|\leq\r}\left[f(x+\e)-f(x)\right]$ and reformulated the finite-sum objective in \eqref{eq:stoch-main-prob} as the min--max problem:
\begin{align*}
  \min_{x\in\R^d}\ \max_{\|\e\|\leq\r}f(x+\e),
\end{align*}
where $\e$ denotes a perturbation constrained to lie in a neighborhood of radius $\r$. We refer to $\r$ as the \emph{sharpness radius}. The advantage of this viewpoint is that it effectively penalizes the empirical sharpness, thereby encouraging convergence to flatter minima. Approximating the inner maximization via a first-order Taylor expansion of $f$ around $x$ and optimizing over $\e$ yields the (Normalized) Sharpness-Aware Minimization (SAM) update:
\begin{align*}
    \label{eq:nsam}
    e^t&=x^t+\r_t\frac{\nabla f_{S_t}(x^t)}{\|\nabla f_{S_t}(x^t)\|},\\
    x^{t+1}&=x^t-\g_t\nabla f_{S_t}(e^t),
    \tag{SAM}
\end{align*}
where $S_t\subseteq[n]$ is a randomly sampled mini-batch of fixed size $|S_t|=\t$, drawn independently at each iteration $t$. We use the notation $f_{S_t}(x)=\frac{1}{\t}\sum_{i\in S_t}f_i(x)$ and $\nabla f_{S_t}(x)=\frac{1}{\t}\sum_{i\in S_t}\nabla f_i(x)$. The normalization in the inner step ensures that the perturbation has controlled magnitude, since $\|e^t-x^t\|=\r_t$. This constraint keeps the perturbed point close to $x^t$ and is known to yield more stable optimization, see \citet{dai2024crucial} for further discussion.

Building on \ref{eq:nsam}, Unnormalized Sharpness-Aware Minimization (USAM), given by
\begin{align*}
    \label{eq:usam}
    e^t&=x^t+\r_t\nabla f_{S_t}(x^t),\\
    x^{t+1}&=x^t-\g_t\nabla f_{S_t}(e^t).
    \tag{USAM}
\end{align*}
was proposed in \citet{andriushchenko2022towards} and further analyzed in \citet{shin2025critical,dai2024crucial}. Unlike \ref{eq:nsam}, the iterate perturbation in \ref{eq:usam} is not normalized, and therefore $e^t$ may lie substantially farther from $x^t$. In turn, the resulting updates can be significantly more aggressive, which may render \ref{eq:usam} less stable in practice. Nevertheless, in the original \ref{eq:usam} work, \citet{andriushchenko2022towards} argue that such normalization is not essential for obtaining generalization improvements, and instead focus on \ref{eq:usam} because it has better theoretical guarantees. In our work, we follow this approach and develop a theory primarily for the \ref{eq:usam} update rule. 

In this work, for \ref{eq:nsam} and \ref{eq:usam}, we use the same mini-batch $S_t$ to compute the extrapolated point $e^t$ and to form the update $x^{t+1}$. This choice is standard in the SAM literature, though some works consider alternative sampling schemes, see~\citet{andriushchenko2022towards}. Moreover, note that by setting $S_t=[n]$ the update rules recover the deterministic (full-batch) versions of \ref{eq:nsam} and \ref{eq:usam}. Finally, when $\r_t=0$, both \ref{eq:nsam} and \ref{eq:usam} reduce to standard SGD~\citep{gower2019sgd,gower2021sgd}.

\begin{table*}[t]
    \centering
    \caption{Summary of various convergence guarantees for SAM and its variants. All the rates are stated for smooth objectives. The term $N$ indicates convergence up to a neighborhood of the solution; its definition varies across works (see the corresponding papers for details). }
    \resizebox{\textwidth}{!}{%
    \begin{tabular}{lllll}
        \toprule
        \textbf{Work} & \textbf{Problem Class} & \textbf{Step Size $\g_t$} & \textbf{Assumptions on Noise} & \textbf{Rate} \\
        \midrule
        \multicolumn{5}{l}{\textbf{\textit{Deterministic Setting}}} \\
        \midrule
        \citep{dai2024crucial} & $\m$-Strongly Convex & Constant & - & $f(x^T)-f^*\leq\O\left(\left(1-\g(2-L\r)\m \right)^T+N\right)$ \\
        \midrule
        \multirow{3}{*}{\citep{si2023practical}} & $\m$-Strongly Convex & Decreasing & - & $f(x^T)-f^*\leq\O\left(\exp(-T)+1/T^2\right)$ \\
         & Convex & Decreasing & - & $\frac{1}{T}\sum_{t=0}^{T-1}\|\nabla f(x^t)\|^2\leq\O\left(1/T+1/\sqrt{T}\right)$ \\
         & Non-Convex & Constant & - & $\frac{1}{T}\sum_{t=0}^{T-1}\|\nabla f(x^t)\|^2\leq\O\left(1/T+N\right)$ \\
        \midrule
        \citep{oikonomou2025sharpness} & $\m$-PL & Constant & - & $f(\overline{x}^T)-f^*\leq\O\left(\left(1-\g\m \right)^T\right)$ \\
        \midrule
        \rowcolor{green!30} & Convex & \emph{Adaptive} & - & $f(\overline{x}^t)-f^*\leq\O\left(1/T\right)$ \\
        \rowcolor{green!30}\multirow{-2}{*}{Ours} & $\m$-Strongly Convex & \emph{Adaptive} & - & $\|x^T-x^*\|^2\leq\O\left((1-\m(1-L\r)^2/(4L))^T\right)$ \\
        
        \midrule
        \multicolumn{5}{l}{\textbf{\textit{Stochastic Setting}}} \\
        \midrule
        \multirow{3}{*}{\citep{si2023practical}} & $\m$-Strongly Convex & Decreasing & Bounded Variance & $\E[f(x^T)-f^*]\leq\O\left(\exp(-T)+1/T+N\right)$ \\
         & Convex & Decreasing & Bounded Variance & $\frac{1}{T}\sum_{t=0}^{T-1}\E\|\nabla f(x^t)\|^2\leq\O\left(1/T+1/\sqrt{T}+N\right)$ \\
         & Non-Convex & Decreasing & Bounded Variance & $\frac{1}{T}\sum_{t=0}^{T-1}\E\|\nabla f(x^t)\|^2\leq\O\left(1/T+1/\sqrt{T}+N\right)$ \\
        \midrule
        \citep{sun2024adasam} & Non-convex & \emph{Adaptive} & \makecell[l]{Bounded Variance, \\ Bounded Gradients} & $\frac{1}{T}\sum_{t=0}^{T-1}\E\|\nabla f(x^t)\|^2\leq\O\left(1/\sqrt{T}\right)$ \\
        \midrule
        \multirow{3}{*}{\citep{oikonomou2025sharpness}} & Non-convex & Decreasing & Expected Residual & $\frac{1}{T}\sum_{t=0}^{T-1}\E\|\nabla f(x^t)\|^2\leq\O\left(1/\sqrt{T}\right)$ \\
        & $\m$-PL & Constant & Expected Residual & $\E[f(x^T)-f^*]\leq\O\left((1-\g\m)^T+N\right)$ \\
        & $\m$-PL & Decreasing  & Expected Residual & $\E[f(x^T)-f^*]\leq\O\left(1/T\right)$ \\
        \midrule
        \citep{cheng2025lightsam} & Non-convex & \emph{Adaptive} & Growth Condition & $\frac{1}{T}\sum_{t=0}^{T-1}\E\|\nabla f(x^t)\|\leq\O\left(\log T/T^{1/4}\right)$ \\
        \midrule
        \rowcolor{green!30} & Convex & \emph{Adaptive} & - & $\E[f(\overline{x}^T)-f^*]\leq\O\left(1/T+N\right)$ \\
        \rowcolor{green!30}\multirow{-2}{*}{Ours} & $\m$-Strongly Convex & \emph{Adaptive} & - & $\E\|x^T-x^*\|^2\leq\O\left((1-\m(1-L_{\max}\r)^2/(4L_{\max}))^T+N\right)$ \\
        \bottomrule
    \end{tabular}%
    }
    \label{tab:comparison}
\end{table*}

\paragraph{On Convergence of SAM-type methods.}
A substantial body of work has analyzed the convergence behavior of \ref{eq:nsam} and \ref{eq:usam} across a range of settings. In the deterministic regime, \citet{dai2024crucial} show that, for smooth and strongly convex objectives, \ref{eq:nsam} converges at a linear rate to a neighborhood of the optimum. Also in the deterministic case, \citet{khanh2024fundamental} derive several basic guarantees for both \ref{eq:nsam} and \ref{eq:usam}, including properties such as stationarity of accumulation points and convergence of gradient norms to zero. Beyond the purely deterministic case, \citet{si2023practical} provide convergence results for \eqref{eq:nsam} in both deterministic and stochastic settings, covering convex, strongly convex, and non-convex objectives. In the stochastic regime, \citet{andriushchenko2022towards} establish convergence results for \ref{eq:usam} under PL objectives as well as for general non-convex problems. More recently, \citet{oikonomou2025sharpness} develop a broad framework by studying a Unified SAM formulation that encompasses both \ref{eq:nsam} and \ref{eq:usam}; under a relaxed condition (Expected Residual), they prove convergence guarantees in the stochastic regime, including linear rates for PL objectives and sublinear rates in the general non-convex setting. 

In practice, a major limitation of SAM and its variants is the substantial effort required to tune its hyperparameters, which can be both costly and time-consuming. This has led to increasing interest in adaptive SAM-type methods, which adjust their parameters on the fly using information gathered during the iterations. For instance, \citet{naganuma2024smoothness} propose an adaptive step size rule for SAM, based on the adaptive step size strategy of \citet{malitsky2020adaptive}, but do not establish convergence guarantees. \citet{sun2024adasam} introduce an adaptive learning-rate scheme for SAM together with momentum-based acceleration, and prove an $\O\left(1/\sqrt{T}\right)$ convergence rate for $\frac{1}{T}\sum_{t=0}^{T-1}\E\|\nabla f(x^t)\|^2$ on smooth objectives under bounded-variance and bounded-gradient assumptions. More recently, \citet{cheng2025lightsam} develop variants that combine SAM with Adagrad \citep{duchi2011adaptive,ward2020adagrad} and Adam \citep{kingma2014adam}, obtaining an $\O\left(\log T/T^{1/4}\right)$ rate for $\frac{1}{T}\sum_{t=0}^{T-1}\E\|\nabla f(x^t)\|$ in the smooth case under a certain type of growth condition. 

Prior adaptive analyses, primarily developed for non-convex objectives, either have additional assumptions \citep{sun2024adasam} or obtain slower rates \citep{cheng2025lightsam}, highlighting the difficulty of the theoretical analysis for adaptive step sizes for SAM. In this work, we take inspiration from the recently introduced and highly efficient adaptive Polyak step sizes for SGD and investigate their applicability and extensions to SAM and its variants. See also \Cref{tab:comparison} for a summary of our results and comparison with closely related works. 

\subsection{Main Contributions}
\label{sec:main-contrib}
Our contributions are summarized as follows.

\paragraph{Polyak step sizes for Sharpness-Aware Minimization.}
Motivated by the success of Polyak step sizes for subgradient methods and for SGD, and by the lack of principled Polyak-type adaptivity for SAM, we propose Polyak-inspired schedulers for \ref{eq:usam}. Starting from the \ref{eq:usam} update, we show how ideas from the Polyak step size literature can be adapted to the sharpness-aware setting, yielding a closed-form and fully adaptive step size expressed only in terms of quantities available at the current iteration, namely the loss and the gradient evaluated at the perturbed point. When the sharpness radius is set to zero, our schedulers reduce to the classical Polyak step size and its stochastic analogue SPS$_{\max}$ from \citet{loizou2021stochastic}, thereby unifying standard and sharpness-aware gradient methods within a single framework. To the best of our knowledge, this is the first work to connect stochastic Polyak step sizes with SAM-type updates, providing a new route to adaptive sharpness-aware optimization.

\paragraph{Deterministic convergence guarantees.}
For deterministic (full-batch) \ref{eq:usam} equipped with the proposed Polyak scheduler, we establish non-asymptotic convergence guarantees under standard smoothness and (strong) convexity assumptions. In particular, we prove linear convergence in squared distance $\|x^t-x^*\|^2$ for $\m$-strongly convex objectives, matching the known rate for GD with the Polyak step size, and a sublinear $\mathcal{O}(1/T)$ rate for general convex objectives, with constants that explicitly quantify the effect of the sharpness radius. Furthermore, we relax the constant-radius condition by allowing a non-increasing radius schedule $(\r_t)_{t\geq0}$ with $\r_t\to0$, and show that \ref{eq:usam} with the Polyak scheduler still ensures $\|\nabla f(x^t)\|\to0$ for deterministic convex objectives. 

\paragraph{Stochastic convergence guarantees.}
In the finite-sum setting, we show that \ref{eq:usam} with the proposed Stochastic Polyak scheduler converges up to a neighborhood of the solution of \eqref{eq:stoch-main-prob}. Concretely, for $\m$-strongly convex objectives we prove linear convergence in expected squared distance $\E\|x^t-x^*\|^2$ to a neighborhood whose size is quantified by a certain variance measure, while for general convex objectives we obtain sublinear bounds on $\E[f(\overline{x}^T)-f^*]$. These guarantees recover the deterministic rates when $S_t=[n]$. This provides the first convergence theory for adaptive \ref{eq:usam} based on Polyak-type step sizes, and it avoids extra conditions commonly imposed in prior analyses of adaptive SAM-type methods, such as bounded variance, bounded gradients, or growth conditions. Moreover, as a side results of our theory, we show that the neighborhood of convergence disappears in interpolated regimes and we derive a novel convergence analysis for constant step size \ref{eq:usam} in the stochastic regime, again without requiring any extra assumptions. 

\paragraph{Numerical evaluation on deep learning benchmarks.}
We complement our theoretical developments with an empirical study of our Polyak schedulers for SAM on standard image classification benchmarks. Using ResNet architectures on CIFAR-10 and CIFAR-100, we compare the proposed methods against tuned SAM and SAM with cosine annealing, examining generalization performance and robustness to hyperparameter choices. We also include synthetic experiments that empirically verify the convergence guarantees predicted by our theory. Our results indicate that the Polyak schedulers can significantly reduce the learning-rate tuning burden in SAM while maintaining or improving test accuracy, supporting the practical relevance of our approach for deep learning applications.

\section{Polyak Schedulers for USAM}
\label{sec:polyak-usam}
\begin{table*}[t]
    \centering
    \caption{Correspondence between Polyak step sizes for SGD and USAM. Here $f^*=\inf_{x\in\R^d} f(x)$, and $\ell_{S_t}^*$ is any lower bound for the mini-batch objective $f_{S_t}$ (typically $\ell_{S_t}^*=0$ for non-negative losses).}
    \begin{tabular}{l|cc}
        \toprule
        \textbf{Setting} & \textbf{SGD} & \textbf{USAM} \\
        \midrule
        Deterministic 
        & $\displaystyle \g_t=\frac{f(x^t)-f^*}{\|\nabla f(x^t)\|^2}$ 
        & $\displaystyle \g_t=\frac{f(e^t)-f^*-\r_t\langle\nabla f(e^t),\nabla f(x^t)\rangle}{\|\nabla f(e^t)\|^2}$ \\
        Stochastic 
        & $\displaystyle \g_t=\min\left\{\frac{f_{S_t}(x^t)-\ell_{S_t}^*}{\|\nabla f_{S_t}(x^t)\|^2},\g_b\right\}$ 
        & $\displaystyle \g_t=\min\left\{\frac{f_{S_t}(e^t)-\ell_{S_t}^*-\r_t\langle\nabla f_{S_t}(e^t),\nabla f_{S_t}(x^t)\rangle}{\|\nabla f_{S_t}(e^t)\|^2},\g_b\right\}$ \\
        \bottomrule
    \end{tabular}
    \label{tab:correspondence}
\end{table*}

This section reviews previous Polyak step size rules and introduces their counterparts for \ref{eq:usam}. We first recall the classical Polyak step size (PS) and its stochastic analogue (SPS$_{\max}$). We then derive Polyak-style learning rates for deterministic and stochastic \ref{eq:usam} using analogous techniques, and summarize basic properties that will be used in the convergence analysis.

\subsection{Polyak Step Sizes: Background}
\label{sec:polyak-background}
The Polyak step size, introduced in \citet{polyak1969minimization}, is a classical adaptive choice for gradient descent (GD), $x^{t+1}=x^t-\g_t\nabla f(x^t)$, in convex optimization. It is given by
\begin{align}
    \label{eq:ps}
    \g_t=\frac{f(x^t)-f^*}{\|\nabla f(x^t)\|^2},
    \tag{GD-PS}
\end{align}
and arises naturally by selecting $\g_t$ to minimize an upper bound on $\|x^{t+1}-x^*\|^2$ in the standard GD analysis. Beyond its original context, Polyak-type rules have also been used to analyze deterministic sub-gradient methods under various assumptions, often yielding favorable convergence guarantees \citep{boyd2003subgradient, davis2018subgradient, hazan2019revisiting}. While \ref{eq:ps} depends on the optimal value $f^*=f(x^*)$, this quantity (or a tight lower bound) is available in several applications, such as feasibility over convex sets and positive semidefinite matrix completion \citep{boyd2003subgradient}.

Motivated by these guarantees, \citet{loizou2021stochastic} proposed a stochastic adaptation of Polyak's rule for SGD, $x^{t+1}=x^t-\g_t\nabla f_{S_t}(x^t)$. Their stochastic Polyak step size (SPS$_{\max}$) retains the main appeal of \ref{eq:ps}, namely reduced dependence on problem parameters such as smoothness or strong convexity constants, while achieving rates comparable to standard SGD and showing strong empirical performance in over-parameterized regimes. More specifically, \citet{loizou2021stochastic} proposed the SPS$_{\max}$ given by:
\begin{align}
    \label{eq:sps}
    \g_t=\min\left\{\frac{f_{S_t}(x^t)-\ell_{S_t}^*}{\|\nabla f_{S_t}(x^t)\|^2},\g_b\right\}.
    \tag{SPS$_{\max}$}
\end{align}
Here, $\ell_{S_t}^*$ is any lower bound on $f_{S_t}$. Note that in most learning problems where the losses are non-negative, one may simply take $\ell_{S_t}^*=0$.\footnote{The original formulation in \citet{loizou2021stochastic} uses the mini-batch optimal value $f_{S_t}^*=\inf_{x\in\R^d}f_{S_t}(x)$. Following \citet{orvieto2022dynamics}, one can replace it with a valid lower bound $\ell_{S_t}^*$ without changing the essence of the convergence guarantees.} The cap $\g_b>0$ prevents excessively large steps and is typically used to ensure stability and convergence to a neighborhood in the stochastic setting.

\subsection{Polyak Scheduler for USAM}
\label{sec:polyak-usam-det}
Let us now derive a Polyak-type step size tailored to \ref{eq:usam} in the deterministic setting. The construction follows the classical \ref{eq:ps} derivation: we upper bound the quantity $\|x^t-x^*\|^2$ and select $\g_t$ to minimize this bound. We also show that, under standard smoothness and convexity assumptions and for sufficiently small sharpness radius $\rho$, the resulting step size is automatically non-negative.

Expanding the squared distance yields:\\
\resizebox{1.0\columnwidth}{!}{
    \begin{minipage}{\linewidth}
        \begin{align*}
            \|&x^{t+1}-x^*\|^2-\|x^t-x^*\|^2\\
            &=-2\g_t\langle\nabla f(e^t),x^t-x^*\rangle+\g_t^2\|\nabla f(e^t)\|^2 \\
            &=-2\g_t\left(\langle\nabla f(e^t),e^t-x^*\rangle-\langle\nabla f(e^t),e^t-x^t\rangle\right)+\g_t^2\|\nabla f(e^t)\|^2\\
            &\leq-2\g_t\left(f(e^t)-f^*-\langle\nabla f(e^t),e^t-x^t\rangle\right)+\g_t^2\|\nabla f(e^t)\|^2\\
            &=-2\g_t\left(f(e^t)-f^*-\r_t\langle\nabla f(e^t),\nabla f(x^t)\rangle\right)+\g_t^2\|\nabla f(e^t)\|^2,
        \end{align*}
    \end{minipage}
}

where the inequality follows from the fact that $f$ is convex and the last equality follows from the \ref{eq:usam} update rule. Minimizing the resulting quadratic upper bound over $\g_t\geq0$ gives $\g_t=\frac{\left[f(e^t)-f^*-\r_t\langle\nabla f(e^t),\nabla f(x^t)\rangle\right]_+}{\|\nabla f(e^t)\|^2}$, where $[z]_+=\max\{z,0\}$. The ReLU safeguard guarantees $\g_t\geq0$ by construction. In the smooth convex regime of interest, it is in fact unnecessary: for sufficiently small $\r_t$, the numerator is automatically non-negative, and the ReLU can be dropped without changing the analysis.

\begin{proposition}[Non-negativity and lower bound]
    \label{prop:det-ss}
    Let $f$ be convex and $L$-smooth. If $\r_t\leq 1/L$, then
    \begin{align}
        \label{eq:gamma-ineq}
        \g_t\geq\frac{1-L\r_t}{2L(1+L\r_t)}\geq0.
    \end{align}
    In particular, in this regime the ReLU safeguard is redundant, and the Polyak Scheduler for the deterministic \ref{eq:usam} can be written as
    \begin{align}
        \label{eq:usam-ps}
        \g_t=\frac{f(e^t)-f^*-\r_t\langle\nabla f(e^t),\nabla f(x^t)\rangle}{\|\nabla f(e^t)\|^2}.
        \tag{Polyak Scheduler}
    \end{align}
\end{proposition}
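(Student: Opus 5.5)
The plan is to lower bound the numerator $N_t:=f(e^t)-f^*-\r_t\langle\nabla f(e^t),\nabla f(x^t)\rangle$ by a multiple of $\|\nabla f(x^t)\|^2$, and then to compare $\|\nabla f(x^t)\|$ with $\|\nabla f(e^t)\|$ using smoothness. Throughout, write $g^t=\nabla f(x^t)$, so that $e^t-x^t=\r_t g^t$.

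\textbf{Step 1: rewrite the numerator through smoothness.} I would apply the $L$-smoothness upper bound at the base point $e^t$, evaluated at $x^t$:
\begin{align*}
f(x^t)\leq f(e^t)+\langle\nabla f(e^t),x^t-e^t\rangle+\frac{L}{2}\|x^t-e^t\|^2 = f(e^t)-\r_t\langle\nabla f(e^t),g^t\rangle+\frac{L\r_t^2}{2}\|g^t\|^2 .
\end{align*}
Rearranging and subtracting $f^*$ gives $N_t\geq f(x^t)-f^*-\frac{L\r_t^2}{2}\|g^t\|^2$. This is the step that absorbs the awkward inner-product term, and it avoids any need to control the sign of $\langle\nabla f(e^t),g^t\rangle$.

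\textbf{Step 2: lower bound the suboptimality.} For convex $L$-smooth $f$ we have $f(x)-f^*\geq\frac{1}{2L}\|\nabla f(x)\|^2$; this follows from the smoothness bound at $x-\frac1L\nabla f(x)$ together with $f^*\leq f(\cdot)$. Combined with Step 1, this gives
\begin{align*}
N_t\geq \frac{1-L^2\r_t^2}{2L}\|g^t\|^2=\frac{(1-L\r_t)(1+L\r_t)}{2L}\|g^t\|^2 ,
\end{align*}
which is non-negative exactly when $\r_t\leq 1/L$.

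\textbf{Step 3: pass to $\|\nabla f(e^t)\|$.} By Lipschitzness of the gradient, $\|\nabla f(e^t)\|\leq\|g^t\|+L\|e^t-x^t\|=(1+L\r_t)\|g^t\|$. Hence $\|g^t\|^2\geq\|\nabla f(e^t)\|^2/(1+L\r_t)^2$. Substituting into Step 2 and dividing by $\|\nabla f(e^t)\|^2$ yields
\begin{align*}
\g_t\geq\frac{1-L\r_t}{2L(1+L\r_t)}\geq0 .
\end{align*}
Since the numerator is already non-negative, the ReLU does nothing, and the Polyak Scheduler takes the stated form.

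The only delicate point is well-definedness when $\nabla f(e^t)=0$. The reverse triangle inequality gives $\|\nabla f(e^t)\|\geq(1-L\r_t)\|g^t\|$. So for $\r_t<1/L$ the denominator vanishes only when $x^t$ is already optimal, and then the method has stopped. In the boundary case $\r_t=1/L$, I would adopt the convention that the step is taken to be zero (or the iteration terminates) whenever $\nabla f(e^t)=0$. Beyond this bookkeeping, I expect no real obstacle: the key choice is using smoothness at $e^t$ in Step 1, rather than convexity, to handle the cross term.
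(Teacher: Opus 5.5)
Your proposal is correct and follows the paper's proof essentially step for step. The paper also applies the descent lemma at $e^t$ (its \eqref{eq:lem-smooth-2}), then uses $f(x^t)-f^*\geq\frac{1}{2L}\|\nabla f(x^t)\|^2$, and finally uses $\|\nabla f(e^t)\|\leq(1+L\r_t)\|\nabla f(x^t)\|$.

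One small simplification: your convention for the boundary case $\r_t=1/L$ is unnecessary. By convexity, monotonicity of $\nabla f$ gives $\langle\nabla f(e^t),\nabla f(x^t)\rangle\geq\|\nabla f(x^t)\|^2$, hence $\|\nabla f(e^t)\|\geq\|\nabla f(x^t)\|$. So $\nabla f(e^t)=0$ can only happen when $x^t$ is already optimal.
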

The proof can be found in \Cref{sec:proofs-polyak-usam}. Furthermore, we have the following descent property, which will be used repeatedly in the convergence analysis.

\begin{proposition}[Descent property of \ref{eq:usam-ps}]
    \label{prop:det-descent}
    Let $f$ be convex and $L$-smooth, and suppose $\r_t\leq1/L$. Then the iterates generated by deterministic \ref{eq:usam} with \ref{eq:usam-ps} satisfy, for all $t\geq0$,
    \begin{align}
        \label{eq:det-descent}
        \textstyle
        \|x^{t+1}-x^*\|^2\leq\|x^t-x^*\|^2-\frac{(1-L\r_t)^2}{2L}\left(f(x^t)-f^*\right).
    \end{align}
    Notably, the sequence $\{\|x^t-x^*\|\}_{t\geq0}$ is non-increasing.
\end{proposition}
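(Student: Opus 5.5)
The plan is to start from the exact one-step expansion already derived before \Cref{prop:det-ss} and then bound the numerator of the step size from below by a multiple of $f(x^t)-f^*$. Write $g_x=\nabla f(x^t)$, $g_e=\nabla f(e^t)$, and $N_t=f(e^t)-f^*-\r_t\langle g_e,g_x\rangle$, so that $\g_t=N_t/\|g_e\|^2$. The convexity computation preceding \Cref{prop:det-ss} gives
\[
\|x^{t+1}-x^*\|^2-\|x^t-x^*\|^2\leq -2\g_tN_t+\g_t^2\|g_e\|^2 .
\]
Since $\g_t^2\|g_e\|^2=\g_tN_t$ by the definition of the step size, the right-hand side collapses to $-\g_tN_t$. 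By \Cref{prop:det-ss} (valid since $\r_t\leq 1/L$), we have $\g_t\geq\frac{1-L\r_t}{2L(1+L\r_t)}$. It therefore suffices to show $N_t\geq(1-L^2\r_t^2)(f(x^t)-f^*)$. Multiplying the two bounds then yields exactly $\frac{(1-L\r_t)^2}{2L}(f(x^t)-f^*)$, because $(1-L^2\r_t^2)=(1-L\r_t)(1+L\r_t)$.

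The main step is this lower bound on $N_t$. A naive use of plain convexity, $f(e^t)\geq f(x^t)+\r_t\|g_x\|^2$, does not control the cross term $\r_t\langle g_e,g_x\rangle$ tightly enough. Instead I will use the sharper inequality for convex $L$-smooth functions,
\[
f(e^t)\geq f(x^t)+\langle g_x,e^t-x^t\rangle+\tfrac{1}{2L}\|g_e-g_x\|^2 .
\]
Set $d=g_e-g_x$ and substitute $e^t-x^t=\r_tg_x$. This gives
\[
N_t\geq f(x^t)-f^*+\r_t\|g_x\|^2+\tfrac{1}{2L}\|d\|^2-\r_t\|g_x\|^2-\r_t\langle d,g_x\rangle .
\]
The $\r_t\|g_x\|^2$ terms cancel. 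Young's inequality gives $\r_t\langle d,g_x\rangle\leq\frac{1}{2L}\|d\|^2+\frac{L\r_t^2}{2}\|g_x\|^2$, which absorbs the $\|d\|^2$ term exactly and leaves
\[
N_t\geq f(x^t)-f^*-\tfrac{L\r_t^2}{2}\|g_x\|^2 .
\]
Finally, for convex $L$-smooth $f$ we have $\|g_x\|^2\leq 2L(f(x^t)-f^*)$, so $N_t\geq(1-L^2\r_t^2)(f(x^t)-f^*)$, which is nonnegative when $\r_t\leq1/L$.

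Combining these pieces gives \eqref{eq:det-descent}. Monotonicity of $\|x^t-x^*\|$ then follows because the subtracted term is nonnegative: $(1-L\r_t)^2\geq0$ and $f(x^t)\geq f^*$. The only delicate point I expect is choosing the right inequality in the middle step. Both the smoothness-enhanced convexity bound and the precise Young weighting $\frac{1}{2L}$ are needed so that the $\|d\|^2$ terms cancel and the constant comes out as $(1-L^2\r_t^2)$; a cruder choice produces $1-2L^2\r_t^2$, which can be negative.
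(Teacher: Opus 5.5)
Your argument is correct, and its skeleton matches the paper's. The expansion collapses to $-\g_tN_t=-N_t^2/\|\nabla f(e^t)\|^2$, and everything hinges on $N_t\geq(1-L^2\r_t^2)(f(x^t)-f^*)$. The paper bounds $N_t^2$ and $\|\nabla f(e^t)\|^2\leq 2L(1+L\r_t)^2(f(x^t)-f^*)$ separately. You instead write the term as $\g_t\cdot N_t$ and reuse \Cref{prop:det-ss}, which is the same set of estimates regrouped.

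The genuine difference is how you reach $N_t\geq f(x^t)-f^*-\frac{L\r_t^2}{2}\|\nabla f(x^t)\|^2$. The paper gets it in one line from the descent lemma \eqref{eq:smooth-more} applied at $(x,y)=(x^t,e^t)$, i.e.\ by linearizing at the perturbed point $e^t$ rather than at $x^t$. This gives $\langle\nabla f(e^t),e^t-x^t\rangle\leq f(e^t)-f(x^t)+\frac{L}{2}\|e^t-x^t\|^2$ directly (this is \eqref{eq:lem-main-3}) and uses smoothness alone. Your route, via the co-coercivity bound $f(e^t)\geq f(x^t)+\langle\nabla f(x^t),e^t-x^t\rangle+\frac{1}{2L}\|d\|^2$ followed by Young with weight $\frac{1}{2L}$, is valid. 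However, it spends convexity on this step and then discards exactly what convexity bought: $\min_d\bigl\{\frac{1}{2L}\|d\|^2-\r_t\langle d,\nabla f(x^t)\rangle\bigr\}=-\frac{L\r_t^2}{2}\|\nabla f(x^t)\|^2$, so you land precisely on the descent-lemma bound.

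So the step you flag as delicate is routine once you expand around $e^t$. The paper's version also makes visible that convexity is used only in the initial expansion $\langle\nabla f(e^t),e^t-x^*\rangle\geq f(e^t)-f^*$. Like the paper, you implicitly need $\nabla f(x^t)\neq 0$ for $\g_t$ to be defined; the paper disposes of that case by stopping at a minimizer.
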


\subsection{Stochastic Polyak Scheduler for USAM}
\label{sec:polyak-usam-stoch}
We now extend the above construction to the stochastic setting, in direct analogy with the passage from \ref{eq:ps} to \ref{eq:sps}. Let $S_t$ be a random mini-batch of fixed size $\t$. We propose the following capped Polyak-type step size for stochastic \ref{eq:usam}:
\begin{align}
    \label{eq:usam-sps}
    \g_t=\min\left\{\frac{f_{S_t}(e^t)-\ell_{S_t}^*-\r_t\langle\nabla f_{S_t}(e^t),\nabla f_{S_t}(x^t)\rangle}{\|\nabla f_{S_t}(e^t)\|^2},\g_b\right\}.
    \tag{Stochastic Polyak Scheduler}
\end{align}
Here, the parameter $\g_b>0$ has the same purpose as in the original \ref{eq:sps}, and it is a bound that restricts \ref{eq:usam-sps} from being too big and is essential to ensure convergence to a neighborhood of the solution. Additionally, $\ell_{S_t}^*$ is any lower bound of $f_{S_t}$ (typically $\ell_{S_t}^*=0$ for non-negative losses), mirroring the relaxation for \ref{eq:sps}.

Moreover, if each component loss $f_i$ is $L_i$-smooth and we set $L_{\max}=\max_{i\in[n]} L_i$, then, defining $L_{S_t}=\frac{1}{\t}\sum_{i\in S_t}L_i\le L_{\max}$, the same smoothness argument as in \Cref{prop:det-ss} yields that whenever $\r_t\le 1/L_{\max}$, we have $\frac{f_{S_t}(e^t)-\ell_{S_t}^*-\r_t\langle\nabla f_{S_t}(e^t),\nabla f_{S_t}(x^t)\rangle}{\|\nabla f_{S_t}(e^t)\|^2}\geq\frac{1-L_{S_t}\r_t}{2L_{S_t}(1+L_{S_t}\r_t)}\geq\frac{1-L_{\max}\r_t}{2L_{\max}(1+L_{\max}\r_t)}$, and therefore
\begin{align}
    \g_b\geq\g_t\geq\min\left\{\frac{1-L_{\max}\r_t}{2L_{\max}(1+L_{\max}\r_t)},\g_b\right\}.
\end{align}
As a final remark, observe that in both the deterministic and stochastic settings, setting $\r_t=0$ reduces \ref{eq:usam-ps} and \ref{eq:usam-sps} to the classical Polyak rules \ref{eq:ps} and \ref{eq:sps}, respectively; see \Cref{tab:correspondence} for a summary of this correspondence.

\section{Convergence Analysis}
\label{sec:convergence}
This section presents convergence guarantees for deterministic and stochastic \ref{eq:usam} equipped with \ref{eq:usam-ps} and \ref{eq:usam-sps}. Complete proofs are deferred to Appendix \ref{sec:proofs-convergence}.

\subsection{Deterministic}
\label{sec:convergence-det}
We first consider the deterministic (full-batch) setting.

\begin{theorem}[Strongly convex case]
    \label{thm:det-st-convex}
    Let $f$ be $\m$-strongly convex and $L$-smooth. Suppose that $\r_t=\r\leq\frac{1}{L}$. Then the iterates generated by deterministic \ref{eq:usam} with \ref{eq:usam-ps} satisfy, for all $t\geq0$,
    \begin{align*}
        \|x^t-x^*\|^2\leq\left(1-\frac{\m(1-L\r)^2}{4L}\right)^t\|x^0-x^*\|^2.
    \end{align*}
\end{theorem}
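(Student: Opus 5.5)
The plan is to combine the descent inequality of \Cref{prop:det-descent} with the quadratic growth implied by strong convexity. Since $\m$-strong convexity implies convexity, and the radius is constant with $\r_t=\r\leq 1/L$, \Cref{prop:det-descent} applies at every iteration. It gives
\begin{align*}
    \|x^{t+1}-x^*\|^2\leq\|x^t-x^*\|^2-\frac{(1-L\r)^2}{2L}\left(f(x^t)-f^*\right).
\end{align*}

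Next I lower bound the suboptimality gap by the squared distance. Strong convexity, with $\nabla f(x^*)=0$ at the unique minimizer, yields
\begin{align*}
    f(x^t)-f^*\geq\frac{\m}{2}\|x^t-x^*\|^2.
\end{align*}
Substituting this into the descent inequality gives the one-step contraction
\begin{align*}
    \|x^{t+1}-x^*\|^2\leq\left(1-\frac{\m(1-L\r)^2}{4L}\right)\|x^t-x^*\|^2.
\end{align*}

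I also check that the contraction factor lies in $[0,1)$. Because $\m\leq L$ and $(1-L\r)^2\leq1$, the subtracted term is at most $1/4$, so the factor lies in $[3/4,1)$. Unrolling the recursion by induction over $t$ then gives the stated bound.

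The only substantive work is already contained in \Cref{prop:det-descent}, which I am allowed to assume. Within this proof there is no real obstacle beyond confirming that its hypotheses hold for every $t$. That is immediate because $\r$ is fixed and satisfies $\r\leq 1/L$.
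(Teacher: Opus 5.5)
Your proposal is correct and follows the paper's proof exactly: apply the descent inequality of \Cref{prop:det-descent} with the constant radius $\r$, lower bound $f(x^t)-f^*$ by $\frac{\m}{2}\|x^t-x^*\|^2$ via quadratic growth, and unroll the resulting one-step contraction. One small slip: at $\r=1/L$ the factor equals $1$, so it lies in $[3/4,1]$ rather than $[3/4,1)$; this does not matter, because unrolling only needs the factor to be nonnegative.
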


\Cref{thm:det-st-convex} establishes a linear convergence rate in squared distance to the minimizer. When $\r=0$, deterministic \ref{eq:usam} reduces to GD and \ref{eq:usam-ps} reduces to the classical Polyak step size \ref{eq:ps}. In this case, our theorem's contraction factor becomes $1-\m/(4L)$, which matches the standard $\O(\m/L)$ rate for GD with the Polyak step size. 

We next turn to the general convex case.
\begin{theorem}[Convex case]
    \label{thm:det-convex}
    Let $f$ be convex and $L$-smooth. Suppose that $\r_t=\r\leq\frac{1}{L}$. Then the iterates generated by deterministic \ref{eq:usam} with \ref{eq:usam-ps} satisfy, for all $T\geq1$,
    \begin{align*}
        f(\overline{x}^T)-f^*
        \leq\frac{2L\|x^0-x^*\|^2}{T(1-L\r)^2},
    \end{align*}
    where $\overline{x}^{T}=\frac{1}{T}\sum_{t=0}^{T-1}x^t$ is the Cesaro average.
\end{theorem}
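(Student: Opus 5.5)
The plan is to telescope the descent inequality of \Cref{prop:det-descent} and finish with Jensen's inequality.

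Since $\r_t=\r\leq 1/L$ for every $t$, \Cref{prop:det-descent} applies at each iteration and gives
\begin{align*}
    \frac{(1-L\r)^2}{2L}\left(f(x^t)-f^*\right)\leq\|x^t-x^*\|^2-\|x^{t+1}-x^*\|^2 .
\end{align*}
Summing this over $t=0,\dots,T-1$, the right-hand side telescopes to $\|x^0-x^*\|^2-\|x^T-x^*\|^2\leq\|x^0-x^*\|^2$. Dividing by $T$ then yields
\begin{align*}
    \frac{1}{T}\sum_{t=0}^{T-1}\left(f(x^t)-f^*\right)\leq\frac{2L\|x^0-x^*\|^2}{T(1-L\r)^2}.
\end{align*}

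Finally, convexity of $f$ and Jensen's inequality give $f(\overline{x}^T)\leq\frac{1}{T}\sum_{t=0}^{T-1}f(x^t)$, and this concludes the proof.

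The only subtlety is the boundary case $\r=1/L$. There the constant $(1-L\r)^2$ vanishes and the bound reads $+\infty$, so the statement should be read with $\r<1/L$ or accepted as trivial in that case. The real work sits in \Cref{prop:det-descent}, which we are taking as given. For reference, it follows by substituting the lower bound of \Cref{prop:det-ss} on the step size into the quadratic upper bound from the derivation, using $\g_t^2\|\nabla f(e^t)\|^2=\g_t\cdot(\text{numerator})$. One then relates the numerator to $f(x^t)-f^*$ through smoothness and the bound $\|\nabla f(x^t)\|^2\le 2L(f(x^t)-f^*)$.
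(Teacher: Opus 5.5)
Your proposal is correct and follows the paper's proof exactly: you telescope the descent inequality \eqref{eq:det-descent} of \Cref{prop:det-descent} over $t=0,\dots,T-1$, drop the $-\|x^T-x^*\|^2$ term, divide by $T$, and finish with Jensen's inequality. Your write-up also keeps the factor $1/T$, which the paper's displayed final line accidentally omits, and your remark that the bound is vacuous at $\r=1/L$ is accurate.
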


Similarly to the previous theorem, when $\r=0$ in \Cref{thm:det-convex} the rate reduces to $\frac{2L\|x^0-x^*\|^2}{T}$, matching the standard $O(1/T)$ rate for GD with \ref{eq:ps}.

\begin{remark}[Effect of the radius in the deterministic bounds]
    The right-hand side in \Cref{thm:det-convex} is increasing in $\r\in[0,1/L)$, and the same is true for the bound in \Cref{thm:det-st-convex}. Thus, from the perspective of these optimization guarantees, the sharpest bound is obtained at $\r=0$, in which case \ref{eq:usam} reduces to gradient descent and our rates recover the standard ones for GD with the Polyak step size. This observation should not be interpreted as a statement against sharpness-aware updates: in SAM-type methods, choosing $\r>0$ is primarily motivated by generalization considerations rather than by worst-case convex optimization rates.
\end{remark}

The deterministic results above use a constant radius $\r$ that depends on the smoothness constant through the condition $\r\le 1/L$. A natural way to alleviate this restriction is to allow a decreasing sequence of radii. The following theorem shows that, as long as $\r_t\downarrow0$, the method drives the gradient norm to zero.

\begin{theorem}[Decreasing radius implies vanishing gradients]
    \label{thm:dec-rho}
    Let $f$ be convex and $L$-smooth. Consider deterministic \ref{eq:usam} with \ref{eq:usam-ps}, where the radii $(\r_t)_{t\geq0}$ satisfy $\r_t\downarrow0$, meaning that $\r_t\ge 0$, $\r_{t+1}\leq\r_t$ for all $t$, and $\r_t\to 0$. Then $\sum_{t=0}^{\infty}\left(f(x^t)-f^*\right)<\infty$, and consequently $f(x^t)\to f^*$ as $t\to\infty$.
\end{theorem}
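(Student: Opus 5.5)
Since $\r_t\downarrow0$, there is a finite index $t_0$ with $\r_t\leq\frac{1}{2L}$ for all $t\geq t_0$. The plan is to apply \Cref{prop:det-descent} from $t_0$ onward, where its hypothesis $\r_t\leq1/L$ holds. This gives a uniform descent constant, and a telescoping sum finishes the argument.

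\textbf{Step 1: a uniform descent constant.} For $t\geq t_0$ we have $(1-L\r_t)^2\geq 1/4$. \Cref{prop:det-descent} then gives
\begin{align*}
\|x^{t+1}-x^*\|^2\leq\|x^t-x^*\|^2-\frac{1}{8L}\left(f(x^t)-f^*\right).
\end{align*}

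\textbf{Step 2: telescoping.} Summing from $t_0$ to $T-1$ yields
\begin{align*}
\sum_{t=t_0}^{T-1}\left(f(x^t)-f^*\right)\leq 8L\|x^{t_0}-x^*\|^2 .
\end{align*}
The right-hand side does not depend on $T$, so the tail series converges.

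\textbf{Step 3: the initial segment.} The terms with $t<t_0$ are finitely many, so we only need each $x^t$, and hence each $f(x^t)$, to be finite. This requires $\g_t$ to be well defined for every $t$. If $\nabla f(e^t)=0$, then $e^t$ is a minimizer; we treat this as the stopping or fixed-point case, or equivalently take the ReLU-safeguarded step $\g_t$. With the safeguard $\g_t\geq0$, the iterates stay finite, so $\sum_{t<t_0}(f(x^t)-f^*)$ is a finite sum. Adding it to the tail bound gives $\sum_{t=0}^\infty(f(x^t)-f^*)<\infty$.

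\textbf{Step 4: conclusion.} The terms $f(x^t)-f^*$ are nonnegative and their series converges, so $f(x^t)-f^*\to0$. If vanishing gradients are also wanted, $L$-smoothness gives $\|\nabla f(x^t)\|^2\leq 2L(f(x^t)-f^*)$, so $\nabla f(x^t)\to0$ as well.

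\textbf{Main obstacle.} The only delicate point is the early phase where $\r_t>1/L$. There the step size need not be nonnegative without the ReLU, and \Cref{prop:det-descent} does not apply. I will not try to control the distance in that phase. Its only role is to produce a finite $x^{t_0}$, and finiteness is all the telescoping bound needs.
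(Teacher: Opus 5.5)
Your proposal is correct and is essentially the paper's proof. Like the paper, you choose $t_0$ with $\r_t\leq\frac{1}{2L}$ for all $t\geq t_0$, apply \Cref{prop:det-descent} with the uniform constant $\frac{1}{8L}$, telescope from $t_0$, and add the finitely many initial terms. Your extra remarks are sound refinements that the paper leaves implicit (the paper even writes its prefix sum with $\|\nabla f(x^t)\|^2$ by a slip), and the degenerate case is harmless, since $\nabla f(e^t)=0$ forces $\nabla f(x^t)=0$ by \eqref{eq:lem-main-1}.
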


\subsection{Stochastic}
\label{sec:convergence-stoch}
We now consider the stochastic finite-sum problem \eqref{eq:stoch-main-prob}. The results below show that stochastic \ref{eq:usam} with \ref{eq:usam-sps} converges up to a neighborhood whose size is quantified by the following variance-type measure.

\paragraph{Variance measure.}
To quantify the limiting neighborhood, we define
\begin{align}
    \label{eq:variance}
    \s^2:=\E_{S_t}\!\left[f_{S_t}(x^*)-\ell_{S_t}^*\right]=f(x^*)-\E_{S_t}[\ell_{S_t}^*],
\end{align}
where $\ell_{S_t}^*$ is any lower bound on the mini-batch objective $f_{S_t}$. This notion is standard in the stochastic Polyak step size literature \citep{loizou2021stochastic,wang2023generalized,zhang2025adaptive}. Since each $f_i$ is lower bounded, $\s^2<\infty$. We say that \eqref{eq:stoch-main-prob} is \emph{interpolated} if $\s^2=0$, meaning that there exists $x^*\in X^*$ such that $f(x^*)=f_{S_t}(x^*)=\ell_{S_t}^*$. This condition is satisfied in many over-parameterized learning settings; see, e.g., \citet{liang2020just} for nonparametric regression and \citet{ma2018power,zhang2021understanding} for discussions related to deep networks.

Now we are ready for stochastic statements. We start with the strongly convex case. 
\begin{theorem}[Strongly convex case]
    \label{thm:stoch-st-convex}
    Let each $f_i$ be convex and $L_i$-smooth, and set $L_{\max}=\max_{i\in[n]}L_i$. Suppose that $f$ is $\m$-strongly convex and that $\r_t=\r\leq\frac{1}{L_{\max}}$. Let $\a=(1-L_{\max}\r)^2\min\left\{\frac{1}{2L_{\max}},\g_b\right\}$. Then the iterates of \ref{eq:usam} with \ref{eq:usam-sps} satisfy, for all $t\geq0$,
    \begin{align*}
        \textstyle
        \E\|x^t-x^*\|^2\leq\left(1-\frac{\m\a}{2}\right)^t\|x^0-x^*\|^2+\frac{2(2\g_b-\a)}{\m\a}\,\s^2.
    \end{align*}
\end{theorem}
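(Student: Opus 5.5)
We follow the stochastic Polyak step size analysis of \citet{loizou2021stochastic}, adapted to the perturbed point $e^t$ exactly as in the derivation of \ref{eq:usam-ps}. Fix $t$ and condition on $x^t$.

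\textbf{One-step inequality.} Expanding the update and using convexity of $f_{S_t}$ at $e^t$ (as in \Cref{sec:polyak-usam-det}, with $f_{S_t}(x^*)$ in place of $f^*$) gives
\begin{align*}
\|x^{t+1}-x^*\|^2\leq\|x^t-x^*\|^2-2\g_t\bigl(f_{S_t}(e^t)-f_{S_t}(x^*)-\r\langle\nabla f_{S_t}(e^t),\nabla f_{S_t}(x^t)\rangle\bigr)+\g_t^2\|\nabla f_{S_t}(e^t)\|^2.
\end{align*}
Write $N_t:=f_{S_t}(e^t)-\ell_{S_t}^*-\r\langle\nabla f_{S_t}(e^t),\nabla f_{S_t}(x^t)\rangle$. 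Since $\g_t\le N_t/\|\nabla f_{S_t}(e^t)\|^2$, we have $\g_t^2\|\nabla f_{S_t}(e^t)\|^2\le\g_t N_t$. Substituting, and splitting $f_{S_t}(x^*)=\ell_{S_t}^*+(f_{S_t}(x^*)-\ell_{S_t}^*)$,
\begin{align*}
\|x^{t+1}-x^*\|^2\leq\|x^t-x^*\|^2-\g_t N_t+2\g_t\bigl(f_{S_t}(x^*)-\ell_{S_t}^*\bigr).
\end{align*}
The last term is at most $2\g_b(f_{S_t}(x^*)-\ell_{S_t}^*)$ because that quantity is nonnegative.

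\textbf{Lower bound on $\g_tN_t$.} Rewrite $-\g_tN_t=-\g_t(N_t-(f_{S_t}(x^*)-\ell_{S_t}^*))-\g_t(f_{S_t}(x^*)-\ell_{S_t}^*)$, so that the $\s^2$ term collects coefficient $2\g_b-\g_t\le 2\g_b-\a$. Then bound the ``gap'' part $\g_t\bigl(f_{S_t}(e^t)-f_{S_t}(x^*)-\r\langle\cdot,\cdot\rangle\bigr)$ from below by a multiple of $f_{S_t}(x^t)-f_{S_t}(x^*)$.

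Reusing the smoothness argument behind \Cref{prop:det-ss} and \Cref{prop:det-descent} with $f_{S_t}$, $L_{S_t}\le L_{\max}$, and $f_{S_t}(x^*)$ as reference value, one should get
\begin{align*}
f_{S_t}(e^t)-f_{S_t}(x^*)-\r\langle\nabla f_{S_t}(e^t),\nabla f_{S_t}(x^t)\rangle\ge\tfrac{(1-L_{\max}\r)^2}{1}\cdot c\,(f_{S_t}(x^t)-f_{S_t}(x^*)).
\end{align*}
Here $c$ is an absolute constant, and the bound uses $\|\nabla f_{S_t}(x^t)\|^2\le 2L_{S_t}(f_{S_t}(x^t)-f_{S_t}(x^*))$, which relies on $x^*$ possibly not minimizing $f_{S_t}$. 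Combined with $\g_t\ge\min\{\frac{1-L_{\max}\r}{2L_{\max}(1+L_{\max}\r)},\g_b\}$, this should yield a term $-\a(f_{S_t}(x^t)-f_{S_t}(x^*))$ after absorbing factors.

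\textbf{Main obstacle.} This step is the hard one. The deterministic proof uses $\nabla f(x^*)=0$, which fails for $f_{S_t}$. I would first try to repeat the \Cref{prop:det-descent} computation using only convexity plus $L_{S_t}$-smoothness of $f_{S_t}$, bounding $f_{S_t}(e^t)\ge f_{S_t}(x^t)+\r\|\nabla f_{S_t}(x^t)\|^2$ and $\|\nabla f_{S_t}(e^t)-\nabla f_{S_t}(x^t)\|\le L\r\|\nabla f_{S_t}(x^t)\|$. If this fails, I would put the slack into the $\s^2$ term, which explains the factor $2$ and the $2\g_b-\a$ coefficient.

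\textbf{Taking expectations.} Taking expectation over $S_t$, we use $\E[f_{S_t}(x^t)-f_{S_t}(x^*)]=f(x^t)-f^*\ge\frac{\m}{2}\|x^t-x^*\|^2$ by strong convexity, and $\E[f_{S_t}(x^*)-\ell_{S_t}^*]=\s^2$. One subtlety is that $\g_t$ is correlated with $S_t$; handle this by bounding $\g_t$ deterministically from above and below before taking expectations. This gives
\begin{align*}
\E\|x^{t+1}-x^*\|^2\le(1-\tfrac{\m\a}{2})\E\|x^t-x^*\|^2+(2\g_b-\a)\s^2.
\end{align*}
Unrolling and summing the geometric series $\sum_k(1-\m\a/2)^k\le 2/(\m\a)$ gives the stated bound.
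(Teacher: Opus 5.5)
Your one-step inequality is correct, and the trick $\g_t^2\|\nabla f_{S_t}(e^t)\|^2\le\g_tN_t$ is a compact replacement for the paper's two-case split. It is valid because $N_t\ge0$ when $\r\le1/L_{\max}$, and it gives $\|x^{t+1}-x^*\|^2\le\|x^t-x^*\|^2-\g_tN_t+2\g_b(f_{S_t}(x^*)-\ell_{S_t}^*)$.

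The gap is in the step you flag as the main obstacle, and your decomposition $N_t=\text{gap}+(f_{S_t}(x^*)-\ell_{S_t}^*)$ creates it. First, the inequality $\|\nabla f_{S_t}(x^t)\|^2\le2L_{S_t}(f_{S_t}(x^t)-f_{S_t}(x^*))$ that you plan to use is false. The point $x^*$ minimizes $f$, not $f_{S_t}$, so the right-hand side can be negative while the left-hand side cannot. Second, even granting a bound $\text{gap}\ge c\,(f_{S_t}(x^t)-f_{S_t}(x^*))$, you could not then replace the random $\g_t$ by its lower bound, because $f_{S_t}(x^t)-f_{S_t}(x^*)$ has no sign. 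Bounding $\g_t$ deterministically before taking expectations works only when $\g_t$ multiplies a nonnegative quantity. Your fallback of putting the slack into $\s^2$ is never carried out, so the coefficient $2\g_b-\a$ is asserted, not derived.

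The missing idea is to measure progress against $\ell_{S_t}^*$ rather than $f_{S_t}(x^*)$, and not to split $N_t$ at all.

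\begin{itemize}
\item Since $\ell_{S_t}^*\le\inf f_{S_t}$ and $f_{S_t}$ is $L_{\max}$-smooth, the bound $\|\nabla f_{S_t}(x^t)\|^2\le2L_{\max}(f_{S_t}(x^t)-\ell_{S_t}^*)$ does hold.
\item Combined with the smoothness upper bound in \eqref{eq:lem-main-3} applied to $f_{S_t}$, this gives $N_t\ge(1-L_{\max}^2\r^2)(f_{S_t}(x^t)-\ell_{S_t}^*)\ge0$.
\item Both factors of $\g_tN_t$ are now nonnegative. Using $\g_t\ge\min\{\frac{1-L_{\max}\r}{2L_{\max}(1+L_{\max}\r)},\g_b\}$ and $1-L_{\max}^2\r^2\ge(1-L_{\max}\r)^2$, you get $\g_tN_t\ge\a\,(f_{S_t}(x^t)-\ell_{S_t}^*)$ pointwise.
\item Taking conditional expectation, $\E_t[f_{S_t}(x^t)-\ell_{S_t}^*]=(f(x^t)-f^*)+\s^2$. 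This extra $-\a\s^2$ is what turns $2\g_b\s^2$ into $(2\g_b-\a)\s^2$; it does not come from adjusting $\g_t$.
\end{itemize}

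From there your quadratic-growth and unrolling steps go through unchanged. This is the paper's argument.
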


\Cref{thm:stoch-st-convex} shows that, for strongly convex objectives, \ref{eq:usam} with \ref{eq:usam-sps} converges linearly to a neighborhood whose size is controlled by $\s^2$. Notably, the guarantee does not require additional assumptions such as bounded gradients or growth conditions. When $\r=0$, stochastic \ref{eq:usam} reduces to SGD and \ref{eq:usam-sps} reduces to \ref{eq:sps}. In this case, the bound of \Cref{thm:stoch-st-convex} matches (up to constants) the SPS$_{\max}$ guarantee of \citet{loizou2021stochastic} for SGD on smooth strongly convex objectives.

When interpolation holds ($\s^2=0$), the neighborhood term vanishes. In this case, the step size cap is not needed and one may take $\g_b=\infty$.
\begin{corollary}[Interpolation]
    Assume interpolation ($\s^2=0$) and let the assumptions of \Cref{thm:stoch-st-convex} hold. Then the iterates of \ref{eq:usam} with \ref{eq:usam-sps} and $\g_b=\infty$ satisfy, for all $t\geq0$,
    \begin{align*}
        \textstyle
        \E\|x^t-x^*\|^2\leq\left(1-\frac{\m(1-L_{\max}\r)^2}{4L_{\max}}\right)^t\|x^0-x^*\|^2.
    \end{align*}
\end{corollary}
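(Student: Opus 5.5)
Our approach is to re-derive the one-step recursion behind \Cref{thm:stoch-st-convex} with $\g_b=\infty$ and $\s^2=0$, rather than plug $\g_b=\infty$ into the final bound. Plugging in directly is not legitimate: $\a$ becomes $(1-L_{\max}\r)^2/(2L_{\max})$, but the neighborhood term $\frac{2(2\g_b-\a)}{\m\a}\s^2$ is of the form $\infty\cdot 0$. We therefore redo the per-iteration estimate with the uncapped step
\begin{align*}
\g_t=\frac{f_{S_t}(e^t)-\ell_{S_t}^*-\r\langle\nabla f_{S_t}(e^t),\nabla f_{S_t}(x^t)\rangle}{\|\nabla f_{S_t}(e^t)\|^2}.
\end{align*}
By the stochastic analogue of \Cref{prop:det-ss}, this step is nonnegative and bounded below by $\frac{1-L_{\max}\r}{2L_{\max}(1+L_{\max}\r)}$.

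\textbf{Step 1 (one-step bound).} Repeat the expansion of \Cref{sec:polyak-usam-det} with $f_{S_t}$ in place of $f$, using convexity of $f_{S_t}$ at $e^t$:
\begin{align*}
\|x^{t+1}-x^*\|^2\le\|x^t-x^*\|^2-2\g_t\bigl(f_{S_t}(e^t)-f_{S_t}(x^*)-\r\langle\nabla f_{S_t}(e^t),\nabla f_{S_t}(x^t)\rangle\bigr)+\g_t^2\|\nabla f_{S_t}(e^t)\|^2.
\end{align*}
Interpolation means $f_{S_t}(x^*)=\ell_{S_t}^*$ almost surely; this holds because $\s^2$ is an expectation of nonnegative terms and equals $0$. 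Hence the bracket is exactly the Polyak numerator, and the choice of $\g_t$ makes the last two terms collapse to $-\g_t\bigl(f_{S_t}(e^t)-\ell_{S_t}^*-\r\langle\cdot,\cdot\rangle\bigr)$.

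\textbf{Step 2 (lower bound by suboptimality).} We need to show
\begin{align*}
\g_t\bigl(f_{S_t}(e^t)-\ell_{S_t}^*-\r\langle\nabla f_{S_t}(e^t),\nabla f_{S_t}(x^t)\rangle\bigr)\ge\frac{(1-L_{\max}\r)^2}{2L_{\max}}\bigl(f_{S_t}(x^t)-f_{S_t}(x^*)\bigr).
\end{align*}
This is exactly the per-batch inequality underlying \Cref{prop:det-descent}, with $L$ replaced by $L_{S_t}\le L_{\max}$ and $f^*$ replaced by $f_{S_t}(x^*)=\ell_{S_t}^*$; under interpolation $x^*$ minimizes each $f_{S_t}$. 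We reuse that argument: smoothness gives a lower bound on the numerator in terms of $f_{S_t}(x^t)-\ell^*_{S_t}$ and an upper bound on $\|\nabla f_{S_t}(e^t)\|$, and the constant is then monotone in $L_{S_t}$. This step is the only real obstacle, namely checking that the deterministic descent argument transfers batchwise without using the cap. We expect it to, since that argument never used a cap.

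\textbf{Step 3 (expectation and contraction).} Take conditional expectation over $S_t$. Since $\E_{S_t}[f_{S_t}(x^t)-f_{S_t}(x^*)]=f(x^t)-f^*$, and strong convexity with $\nabla f(x^*)=0$ gives $f(x^t)-f^*\ge\frac{\m}{2}\|x^t-x^*\|^2$, we obtain
\begin{align*}
\E_{S_t}\|x^{t+1}-x^*\|^2\le\Bigl(1-\frac{\m(1-L_{\max}\r)^2}{4L_{\max}}\Bigr)\|x^t-x^*\|^2.
\end{align*}
Taking total expectation and unrolling the recursion yields the claim.
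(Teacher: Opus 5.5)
Your proposal is correct and follows essentially the paper's route. It is Case 1 of the paper's stochastic descent inequality, which is the only case when $\g_b=\infty$: under interpolation the term $2\g_t(f_{S_t}(x^*)-\ell_{S_t}^*)$ vanishes pointwise before any cap is needed, and then $A_t^2/\|\nabla f_{S_t}(e^t)\|^2\geq\frac{(1-L_{\max}\r)^2}{2L_{\max}}(f_{S_t}(x^t)-\ell_{S_t}^*)$ combined with quadratic growth gives the contraction. You are also right that $\g_b=\infty$ cannot be literally substituted into the theorem's $\infty\cdot 0$ neighborhood term, and your batchwise re-derivation is the rigorous version of what the paper only asserts in one line.
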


Another consequence of \Cref{thm:stoch-st-convex} is a convergence guarantee for constant step size \ref{eq:usam}. 
\begin{corollary}[Constant step size]
    \label{cor:constant}
    Let the assumptions of \Cref{thm:stoch-st-convex} hold and suppose $\r_t=\r$. If $\g_b\leq\frac{1-L_{\max}\r}{2L_{\max}(1+L_{\max}\r)}$, then \ref{eq:usam-sps} yields $\g_t\equiv\g_b$ and thus it becomes \ref{eq:usam} with constant step size $\g=\g_b$. Moreover, the iterates satisfy, for all $t\geq0$,
    \begin{align*}
        \textstyle
        \E\|x^t-x^*\|^2
        &\leq\left(1-\frac{\m(1-L_{\max}\r)^2\g}{2}\right)^t\|x^0-x^*\|^2\\
        &\q+\frac{2(2-(1-L_{\max}\r)^2)}{\m(1-L_{\max}\r)^2}\,\s^2.
    \end{align*}
\end{corollary}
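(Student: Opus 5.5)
The plan is to apply \Cref{thm:stoch-st-convex} directly, once we have shown that the step size is frozen at $\g_b$. There are two steps.

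\textbf{Step 1: the scheduler is constant.} From the discussion following \ref{eq:usam-sps}, whenever $\r\le 1/L_{\max}$ the Polyak ratio satisfies
\[
\frac{f_{S_t}(e^t)-\ell_{S_t}^*-\r\langle\nabla f_{S_t}(e^t),\nabla f_{S_t}(x^t)\rangle}{\|\nabla f_{S_t}(e^t)\|^2}\ \ge\ \frac{1-L_{\max}\r}{2L_{\max}(1+L_{\max}\r)}.
\]
This bound holds for every realization of $S_t$. It comes from applying the argument of \Cref{prop:det-ss} to the convex, $L_{S_t}$-smooth function $f_{S_t}$ with lower bound $\ell_{S_t}^*$, and then using monotonicity in $L_{S_t}\le L_{\max}$. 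Under the hypothesis $\g_b\le \frac{1-L_{\max}\r}{2L_{\max}(1+L_{\max}\r)}$, the minimum in \ref{eq:usam-sps} is therefore always attained at $\g_b$. So $\g_t\equiv\g_b$ almost surely, and the method is exactly constant-step \ref{eq:usam} with $\g=\g_b$.

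\textbf{Step 2: simplify $\a$.} Since $\frac{1-L_{\max}\r}{1+L_{\max}\r}\le 1$, we have $\g_b\le \frac{1}{2L_{\max}}$. Hence $\min\{\frac{1}{2L_{\max}},\g_b\}=\g_b=\g$, and
\[
\a=(1-L_{\max}\r)^2\g .
\]

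\textbf{Step 3: substitute into \Cref{thm:stoch-st-convex}.} The contraction factor becomes $1-\frac{\m(1-L_{\max}\r)^2\g}{2}$. The neighborhood term becomes
\[
\frac{2(2\g-(1-L_{\max}\r)^2\g)}{\m(1-L_{\max}\r)^2\g}\,\s^2=\frac{2(2-(1-L_{\max}\r)^2)}{\m(1-L_{\max}\r)^2}\,\s^2,
\]
since the factor $\g$ cancels. This is exactly the claimed bound.

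The only delicate point is Step 1. The lower bound on the Polyak ratio must hold pathwise for every mini-batch, and this requires each $f_{S_t}$ to be convex and $L_{S_t}$-smooth with $L_{S_t}\le L_{\max}$. If $\r=1/L_{\max}$, the cap condition forces $\g_b\le 0$, which is excluded. We should therefore note that the statement implicitly takes $\r<1/L_{\max}$. Alternatively, in that edge case we can simply observe that the bound becomes vacuous.
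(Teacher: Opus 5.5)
Your proposal is correct and follows the paper's intended route: the pathwise lower bound $\frac{1-L_{\max}\rho}{2L_{\max}(1+L_{\max}\rho)}$ on the Polyak ratio (stated after the Stochastic Polyak Scheduler) forces $\gamma_t\equiv\gamma_b$. Since $\gamma_b\leq\frac{1}{2L_{\max}}$, this gives $\alpha=(1-L_{\max}\rho)^2\gamma_b$, and substituting into \Cref{thm:stoch-st-convex} yields exactly the stated contraction factor and neighborhood. Your observation that the hypothesis implicitly requires $\rho<1/L_{\max}$ (otherwise no admissible $\gamma_b>0$ exists and $\alpha=0$) is a correct clarification that the paper leaves implicit.
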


Related guarantees for \ref{eq:usam} have been obtained in \citet{andriushchenko2022towards} and \citet{oikonomou2025sharpness}. In \citet{andriushchenko2022towards}, the authors establish an $\O(1/T)$ rate for $\E[f(x^T)-f^*]$ using diminishing parameters $\g_t=\O(1/t)$ and $\r_t=\O(1/\sqrt{t})$, under a bounded-variance assumption. \citet{oikonomou2025sharpness} derive linear rates with constant step sizes under the Expected Residual condition. In contrast, our analysis provides linear convergence in $\E\|x^t-x^*\|^2$ without additional assumptions beyond smoothness and strong convexity. Moreover, a direct comparison shows that the constant step size permitted by \Cref{cor:constant} is strictly larger than the corresponding step sizes in these works.

We finally state the counterpart of \Cref{thm:det-convex} for the general convex stochastic setting.

\begin{theorem}[Convex case]
    \label{thm:stoch-convex}
    Let each $f_i$ be convex and $L_i$-smooth, and set $L_{\max}=\max_{i\in[n]}L_i$. Suppose that $\r_t=\r\leq\frac{1}{L_{\max}}$. 
    Let $\a=(1-L_{\max}\r)^2\min\left\{\frac{1}{2L_{\max}},\g_b\right\}$. Then the iterates of \ref{eq:usam} with \ref{eq:usam-sps} satisfy, for all $T\geq1$,
    \begin{align*}
        \E[f(\overline{x}^T)-f^*]\leq\frac{\|x^0-x^*\|^2}{\a\,T}+\frac{2\g_b-\a}{\a}\,\s^2,
    \end{align*}
\end{theorem}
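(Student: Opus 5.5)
The plan is to establish a one-step stochastic analogue of \Cref{prop:det-descent}, namely
\begin{align*}
\E_{S_t}\|x^{t+1}-x^*\|^2\leq\|x^t-x^*\|^2-\a\left(f(x^t)-f^*\right)+(2\g_b-\a)\s^2 ,
\end{align*}
and then telescope and apply Jensen's inequality. The argument follows the standard SPS$_{\max}$ template of \citet{loizou2021stochastic}. The estimate behind \Cref{thm:stoch-st-convex} will likely be exactly this inequality, with strong convexity used afterwards, so I expect it to be shared.

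\textbf{One-step bound.} Expanding the squared distance as in \Cref{sec:polyak-usam-det}, with $f_{S_t}$ in place of $f$, and using convexity of $f_{S_t}$ at $e^t$, gives
\[
\|x^{t+1}-x^*\|^2\leq\|x^t-x^*\|^2-2\g_t\bigl(f_{S_t}(e^t)-f_{S_t}(x^*)-\r\langle\nabla f_{S_t}(e^t),\nabla f_{S_t}(x^t)\rangle\bigr)+\g_t^2\|\nabla f_{S_t}(e^t)\|^2 .
\]
Let $A_t=f_{S_t}(e^t)-\ell^*_{S_t}-\r\langle\nabla f_{S_t}(e^t),\nabla f_{S_t}(x^t)\rangle$. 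Since $\g_t\leq A_t/\|\nabla f_{S_t}(e^t)\|^2$, we have $\g_t^2\|\nabla f_{S_t}(e^t)\|^2\leq\g_tA_t$. Writing $f_{S_t}(x^*)=\ell^*_{S_t}+(f_{S_t}(x^*)-\ell^*_{S_t})$, this yields
\[
\|x^{t+1}-x^*\|^2\leq\|x^t-x^*\|^2-\g_tA_t+2\g_t\bigl(f_{S_t}(x^*)-\ell^*_{S_t}\bigr).
\]
In the last term I will use $\g_t\leq\g_b$ together with $f_{S_t}(x^*)-\ell^*_{S_t}\geq0$. To keep the exact constant $2\g_b-\a$, I will split it as $2\g_t(\cdot)=\a(\cdot)+(2\g_t-\a)(\cdot)\leq\a(\cdot)+(2\g_b-\a)(\cdot)$; this requires $\a\leq\g_t$, which holds by the lower bound on $\g_t$ established in \Cref{sec:polyak-usam-stoch}.

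\textbf{Main obstacle: lower-bounding $\g_tA_t$.} I need $\g_tA_t\geq\a\bigl(f_{S_t}(x^t)-\ell^*_{S_t}\bigr)$, mirroring the deterministic inequality \eqref{eq:det-descent}. I would reuse the smoothness calculation behind \Cref{prop:det-ss} and \Cref{prop:det-descent}, applied to $f_{S_t}$ with constant $L_{S_t}\leq L_{\max}$. That calculation should give $A_t\geq(1-L_{\max}\r)^2\cdot\frac{\text{const}}{L_{\max}}\|\nabla f_{S_t}(x^t)\|^2$ and $\|\nabla f_{S_t}(e^t)\|\leq(1+L\r)\|\nabla f_{S_t}(x^t)\|$. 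Combined with $f_{S_t}(x^t)-\ell^*_{S_t}\leq$ (something involving $\|\nabla f_{S_t}\|^2/(2L)$) as in the deterministic proof, this should give $A_t\geq(1-L_{\max}\r)^2(f_{S_t}(x^t)-\ell^*_{S_t})/\text{const}$.

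The delicate point is that the deterministic argument used $f^*$ as the true minimum. Here $\ell^*_{S_t}$ is only a lower bound, so the bound $\|\nabla f_{S_t}(x)\|^2\leq2L(f_{S_t}(x)-\ell^*_{S_t})$ still holds, but the direction relating $A_t$ to $f_{S_t}(x^t)-\ell^*_{S_t}$ must be checked carefully. I would then case-split on whether the cap is active:
\begin{itemize}
\item If $\g_t=\g_b$, then $\g_tA_t\geq\g_b\cdot(1-L_{\max}\r)^2(f_{S_t}(x^t)-\ell^*_{S_t})$, up to the right constant.
\item Otherwise $\g_tA_t=A_t^2/\|\nabla f_{S_t}(e^t)\|^2$, and the Polyak-ratio lower bound gives the $\frac{1}{2L_{\max}}$ branch.
\end{itemize}
Together these produce the factor $\a$.

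\textbf{Conclusion.} With $\g_tA_t\geq\a(f_{S_t}(x^t)-\ell^*_{S_t})$ in hand, the $\a$-part of the noise term recombines: $-\a(f_{S_t}(x^t)-\ell^*)+\a(f_{S_t}(x^*)-\ell^*)=-\a(f_{S_t}(x^t)-f_{S_t}(x^*))$. Taking $\E_{S_t}$ then gives $-\a(f(x^t)-f^*)+(2\g_b-\a)\s^2$, which is the one-step bound stated at the start. Taking total expectation, summing over $t=0,\dots,T-1$, dividing by $\a T$, and applying Jensen to $\overline{x}^T$ yields the theorem.
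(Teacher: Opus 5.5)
Your skeleton matches the paper's: expansion, convexity of $f_{S_t}$ at $e^t$, the bound $\g_t^2\|\nabla f_{S_t}(e^t)\|^2\le\g_tA_t$, a case split on the cap, expectation, telescoping, and Jensen. Your treatment of the noise term is also fine. The split $2\g_t=\a+(2\g_t-\a)$ only uses $\g_t\le\g_b$ and $f_{S_t}(x^*)-\ell^*_{S_t}\ge0$; the condition $\a\le\g_t$ is not needed.

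The gap is the step you flag as the main obstacle, $\g_tA_t\ge\a\bigl(f_{S_t}(x^t)-\ell^*_{S_t}\bigr)$. In particular, both branches of your case split rest on an estimate $A_t\ge c\,\bigl(f_{S_t}(x^t)-\ell^*_{S_t}\bigr)$, and your route to it does not work. You propose first $A_t\ge c'\|\nabla f_{S_t}(x^t)\|^2$ as in \Cref{prop:det-ss}, then $f_{S_t}(x^t)-\ell^*_{S_t}\le C\|\nabla f_{S_t}(x^t)\|^2$. The second step is a PL-type inequality that fails here; smoothness only gives the opposite direction $\|\nabla f_{S_t}(x)\|^2\le 2L_{\max}\bigl(f_{S_t}(x)-\ell^*_{S_t}\bigr)$. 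Two counterexamples show this:
\begin{itemize}
\item For a merely convex smooth function the gap can be unbounded while the gradient stays bounded, e.g.\ $\sqrt{1+x^2}$.
\item When $\ell^*_{S_t}<\inf f_{S_t}$ (e.g.\ $\ell^*_{S_t}=0$), the gradient vanishes at the minimizer of $f_{S_t}$ while the gap is positive.
\end{itemize}
Once $A_t$ has been reduced to a gradient norm, the function gap cannot be recovered. So the issue is not $\ell^*_{S_t}$ versus $f^*$: the same route would fail in the deterministic convex case too.

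The missing idea is to keep the function value produced by the descent lemma rather than discarding it. Smoothness of $f_{S_t}$ between $e^t$ and $x^t$ gives $\r\langle\nabla f_{S_t}(e^t),\nabla f_{S_t}(x^t)\rangle=\langle\nabla f_{S_t}(e^t),e^t-x^t\rangle\le f_{S_t}(e^t)-f_{S_t}(x^t)+\frac{L_{\max}\r^2}{2}\|\nabla f_{S_t}(x^t)\|^2$. Hence
\[
A_t\ge f_{S_t}(x^t)-\ell^*_{S_t}-\tfrac{L_{\max}\r^2}{2}\|\nabla f_{S_t}(x^t)\|^2\ge(1-L_{\max}^2\r^2)\bigl(f_{S_t}(x^t)-\ell^*_{S_t}\bigr),
\]
where the valid direction $\|\nabla f_{S_t}(x^t)\|^2\le2L_{\max}\bigl(f_{S_t}(x^t)-\ell^*_{S_t}\bigr)$ is applied only to the subtracted term. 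This is exactly the computation in the paper's proof of \Cref{prop:det-descent}.

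With this bound, both branches close:
\begin{itemize}
\item Capped branch: $\g_bA_t\ge\g_b(1-L_{\max}\r)^2\bigl(f_{S_t}(x^t)-\ell^*_{S_t}\bigr)\ge\a\bigl(f_{S_t}(x^t)-\ell^*_{S_t}\bigr)$.
\item Uncapped branch: the ratio bound $\g_t\ge\frac{1-L_{\max}\r}{2L_{\max}(1+L_{\max}\r)}$ gives $\g_tA_t\ge\frac{(1-L_{\max}\r)^2}{2L_{\max}}\bigl(f_{S_t}(x^t)-\ell^*_{S_t}\bigr)\ge\a\bigl(f_{S_t}(x^t)-\ell^*_{S_t}\bigr)$.
\end{itemize}
The rest of your argument then goes through unchanged.
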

Analogous to the strongly convex case, several specializations of \Cref{thm:stoch-convex} are immediate. When $\r=0$, the bound matches (up to constants) the SPS${\max}$ guarantee of \citet{loizou2021stochastic} for SGD on smooth convex objectives. When interpolation holds ($\s^2=0$), the neighborhood term vanishes and the method converges to the optimum. In direct analogy with \Cref{cor:constant}, choosing $\g_b\leq\frac{1-L{\max}\r}{2L_{\max}(1+L_{\max}\r)}$ further yields a convergence guarantee for constant-step \ref{eq:usam} in the convex regime. Finally, both \Cref{thm:stoch-st-convex,thm:stoch-convex} recover their deterministic counterparts when $S_t=[n]$ and $\ell_{S_t}^*=f_{S_t}^*$.

\section{Numerical experiments}
\label{sec:exps}
This section evaluates the proposed Polyak schedulers on synthetic problems and deep-learning benchmarks. We first verify on synthetic ridge regression that \ref{eq:usam} with \ref{eq:usam-ps} exhibits the linear trends predicted by our theory. We then assess \ref{eq:usam-sps} on CIFAR-10/100 with ResNets, comparing to tuned constant learning rates and cosine annealing. Finally, although our theory focuses on \ref{eq:usam}, we also extend the Polyak schedulers to \ref{eq:nsam} and report corresponding experiments. We provide the code for all of our experiments at \url{https://github.com/dimitris-oik/sam_sps}.

\subsection{Verification of the theory}
\label{sec:exps-ver}

We empirically validate our theoretical guarantees on synthetic strongly convex problems, using \ref{eq:usam} with \ref{eq:usam-ps}. We compare our step sizes against constant step size \ref{eq:usam} baselines that come with convergence guarantees in the literature.

We consider a regularized Ridge regression objective of the form $f(x)=\frac{1}{2n}\sum_{i=1}^n\left(\boldsymbol{A}[i,:]x-b_i\right)^2+\frac{\l_r}{2}\|x\|^2$, where $A_i\in\R^d$ are the rows of a matrix $\boldsymbol{A}\in\R^{n\times d}$ and $b\in\R^n$ and $\l_r$ is the regularization constant. In our experiments we set $n=d=100$ and generate $\boldsymbol{A}$ using the procedure of \citet{lenard1984randomly} so that $\k(\boldsymbol{A})=10$, ensuring strong convexity. In the deterministic case we set $\l_r=0$ and we generate a consistent linear system by first sampling $x^*\in\R^d$ and then setting $b=\boldsymbol{A}x^*$, which implies $f^*=0$. In the stochastic case, we set $\l_r=10^{-3}$ and we use an approximation of $f^*$ in closed form.

\begin{figure}[H]
    \label{fig:comp}
	\centering
    \begin{subfigure}{0.48\columnwidth}
        \centering
        \includegraphics[width=\textwidth]{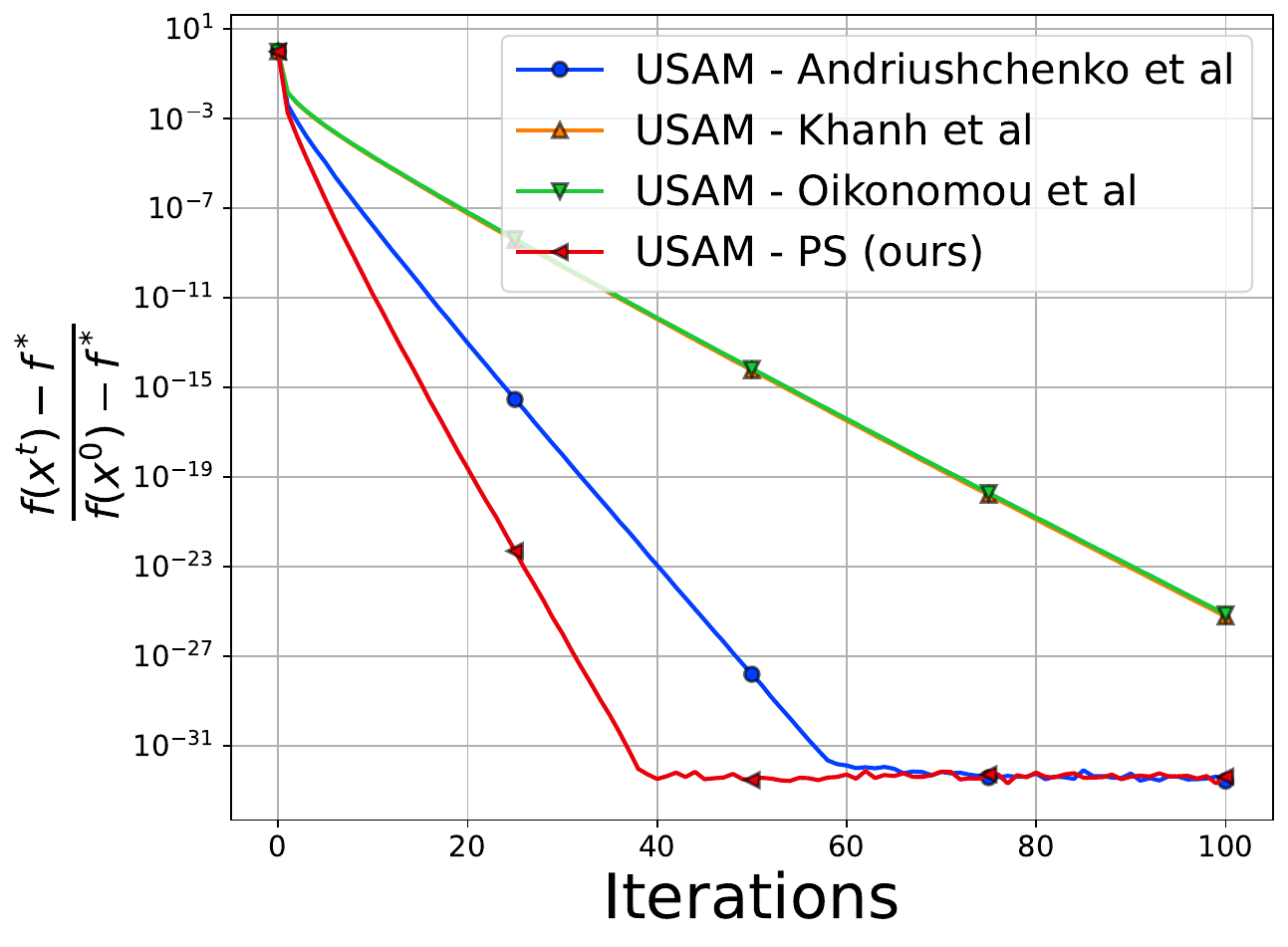}
        \caption{Deterministic.}
        \label{subfig:comp_det}
    \end{subfigure}
    ~
    \begin{subfigure}{0.48\columnwidth}
        \centering
        \includegraphics[width=\textwidth]{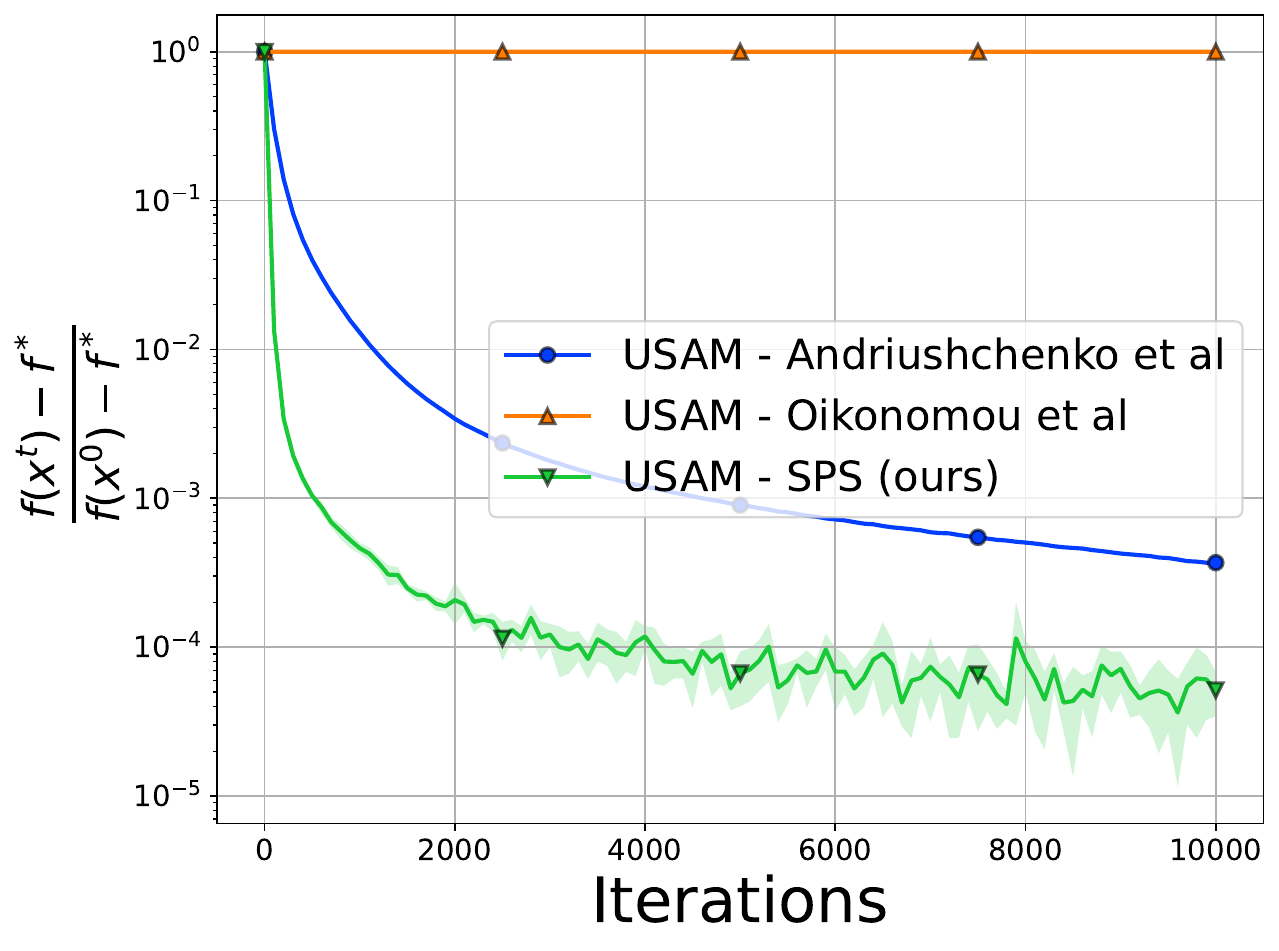}
        \caption{Stochastic.}
        \label{subfig:comp_stoch}
    \end{subfigure}
    \caption{Synthetic Ridge Regression.}
\end{figure}

\paragraph{Deterministic regime.}
We compare \ref{eq:usam} with \ref{eq:usam-ps} against representative constant step size \ref{eq:usam} baselines with deterministic convergence guarantees. Specifically, \citet{khanh2024fundamental} assume $\r\in[0,1/L)$ and $\g\in[0,4/(9L))$, \citet{andriushchenko2022towards} assume $\r\in[0,1/L)$ and $\g\in[0,1/L)$, and \citet{oikonomou2025sharpness} provide a linear convergence rate for $\r\in[0,1/(3L))$ and $\g\in\left[0,\frac{1-3L\r}{L(2L^2\r^2+1)}\right)$. For each baseline, we choose the pair $(\r,\g)$ according to the following rule: when the guarantee requires an interval open on the right (e.g., $\r\in[0,1/L)$), we use the midpoint, while for closed intervals we take the maximum possible value. The resulting curves are shown in \Cref{subfig:comp_det}. All methods exhibit the expected linear convergence, with \ref{eq:usam} and \ref{eq:usam-ps} converging fastest in this test.

\paragraph{Stochastic regime.}
We compare against the following \ref{eq:usam} step sizes with stochastic guarantees: \citet{andriushchenko2022towards} they provide a $\O\left(1/T^2\right)$ rate for $\r_t=\sqrt{\g_t/L_{\max}}$ and $\g_t=\min\left\{\frac{8t+4}{3\m(t+1)^2},\frac{1}{2L_{\max}}\right\}$. In \citet{oikonomou2025sharpness} they provide a linear rate for $\r<\frac{\m}{L_{\max}(\m+2L_{\max})}$ and $\g<\frac{\m-L_{\max}\r(\m+2L_{\max})}{2L_{\max}^2(2L_{\max}^2\r^2+1)}$. We repeat each stochastic experiment $5$ times, using mini-batches $S_t$ drawn uniformly at random from all subsets of size $\tau=10$ and independently across iterations, and report the mean $\pm$ one standard deviation. The results in \Cref{subfig:comp_stoch} show that \ref{eq:usam-sps} converges to a neighborhood of the solution, consistent with our stochastic convergence guarantees in \Cref{sec:convergence-stoch}.

\paragraph{Comparison with other adaptive SAM optimizers.}
On the same Ridge regression problems as above ($n = d = 100$, $\kappa(\boldsymbol{A}) = 10$, consistent system, $f^* = 0$), we further benchmark our Polyak Scheduler against several recently proposed deterministic adaptive SAM variants: AdaSAM \citep{sun2024adasam}, which equips SAM with an AMSGrad-style learning rate and momentum; the three algorithms of \citet{cheng2025lightsam}, which adapt both the learning rate and the perturbation radius using AdaGrad-Norm (LightSAM-I), AdaGrad (LightSAM-II), and Adam (LightSAM-III) updates; and SA-SAM \citep{naganuma2024smoothness}, which sets the learning rate from an adaptive local-smoothness estimate. For each baseline, we sweep its scalar hyperparameters over a small grid and report the best-performing configuration; our Polyak Scheduler runs without any tuning beyond the perturbation radius $\rho$. 

\begin{figure}[H]
    \label{fig:ada_comp}
	\centering
    \begin{subfigure}{0.48\columnwidth}
        \centering
        \includegraphics[width=\textwidth]{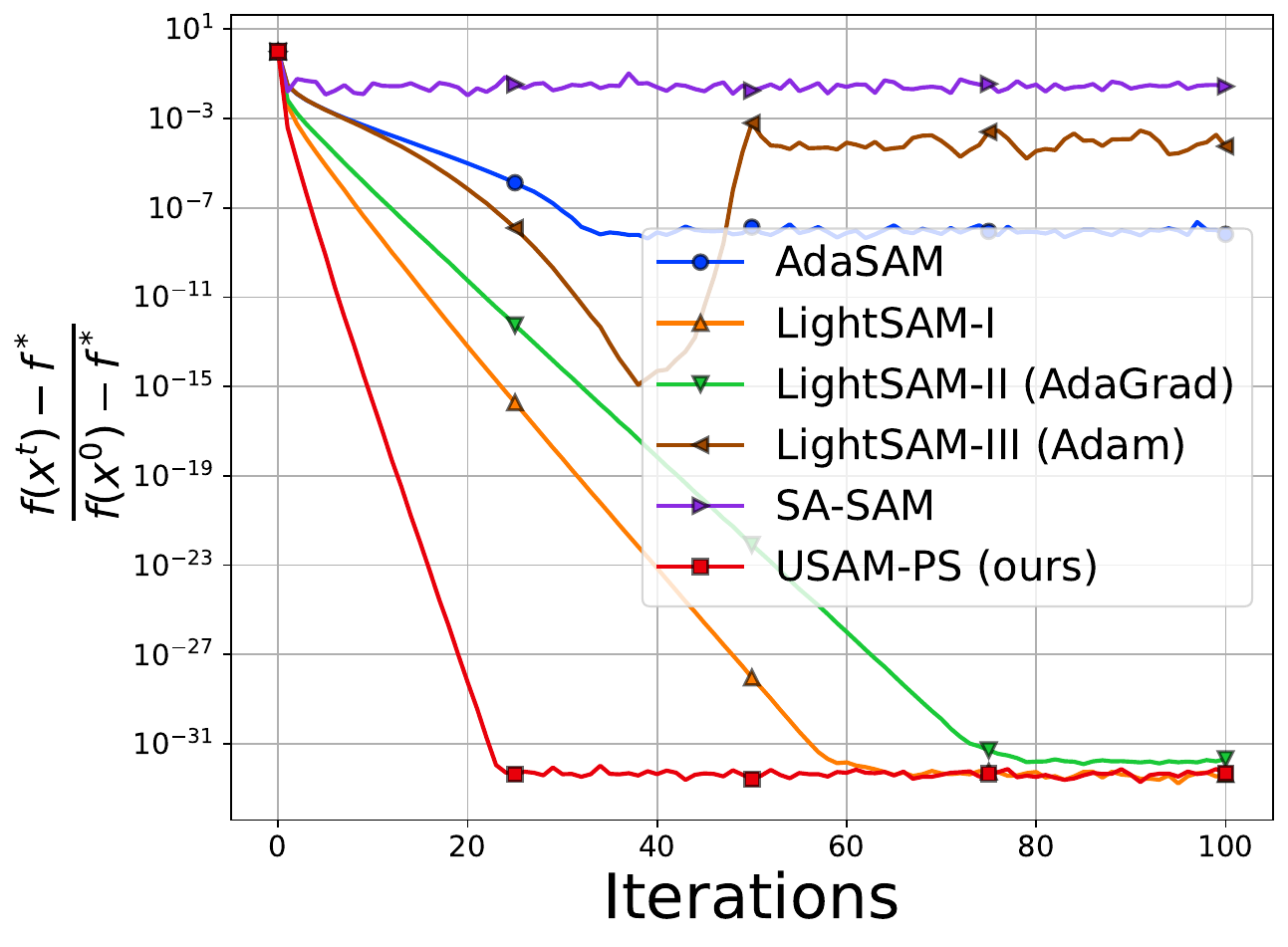}
        \caption{Deterministic.}
        \label{subfig:ada_comp_det}
    \end{subfigure}
    ~
    \begin{subfigure}{0.48\columnwidth}
        \centering
        \includegraphics[width=\textwidth]{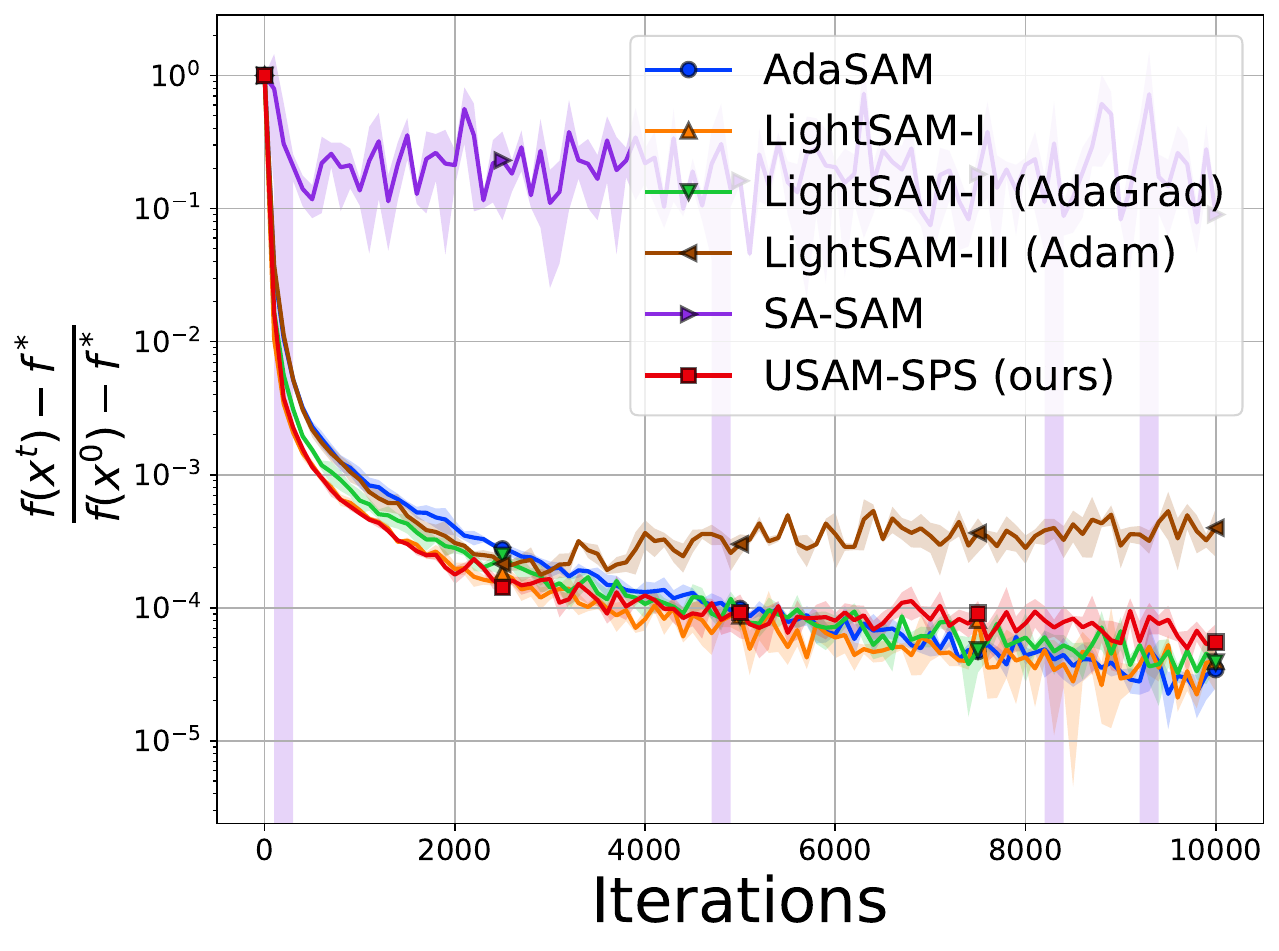}
        \caption{Stochastic.}
        \label{subfig:ada_comp_stoch}
    \end{subfigure}
    \caption{Comparison with other adaptive SAM optimizers.}
\end{figure}

The results are shown in \Cref{fig:ada_comp}. In the deterministic regime, our scheduler converges to the optimum in the fewest iterations; the AdaGrad-based LightSAM variants reach the same accuracy but require noticeably more iterations, while the remaining baselines stall further from the optimum. In the stochastic regime, our scheduler is competitive with the strongest baselines, converging at a comparable rate to a similar neighborhood of the solution. Beyond this empirical performance, our scheduler is the only adaptive method in the comparison that comes with convergence guarantees for smooth, strongly convex objectives.

\subsection{USAM-SPS on DNNs}
\label{sec:exps-usam}

\begin{table*}[t]
    \caption{CIFAR-100 test accuracy (\%, mean $\pm$ std over 3 seeds) for ResNet-32 trained with \ref{eq:usam} under three learning-rate schedules.}
    \label{tab:usam_32_100_0.5}
    \centering
    \begin{tabular}{c|ccc}
         & \textbf{Constant \ref{eq:usam}} & \textbf{\ref{eq:usam} with \ref{eq:ca}} & \textbf{\ref{eq:usam} with \ref{eq:usam-sps}} \\
        \hline
        $\r=0.1$ & $90.56_{\pm 0.18}$ & $90.01_{\pm 0.32}$ & $\mathbf{91.81_{\pm 0.04}}$ \\
        $\r=0.2$ & $90.45_{\pm 0.34}$ & $88.77_{\pm 0.26}$ & $\mathbf{92.23_{\pm 0.22}}$ \\
        $\r=0.3$ & $90.25_{\pm 0.10}$ & $88.05_{\pm 0.23}$ & $\mathbf{92.24_{\pm 0.30}}$ \\
        $\r=0.4$ & $89.56_{\pm 0.07}$ & $86.52_{\pm 0.04}$ & $\mathbf{92.01_{\pm 0.12}}$ \\
    \end{tabular}
\end{table*}

\begin{table*}[t]
    \caption{CIFAR-100 test accuracy (\%, mean $\pm$ std over 3 seeds) for ResNet-32 trained with \ref{eq:nsam} under three learning-rate schedules.}
    \label{tab:nsam_32_100_0.5}
    \centering
    \begin{tabular}{c|ccc}
         & \textbf{Constant \ref{eq:nsam}} & \textbf{\ref{eq:nsam} with \ref{eq:ca}} & \textbf{\ref{eq:nsam} with \ref{eq:nsam-sps}} \\
        \hline
        $\r=0.1$ & $90.17_{\pm 0.11}$ & $90.49_{\pm 0.02}$ & $\mathbf{91.61_{\pm 0.12}}$ \\
        $\r=0.2$ & $90.53_{\pm 0.02}$ & $89.03_{\pm 0.13}$ & $\mathbf{92.24_{\pm 0.07}}$ \\
        $\r=0.3$ & $89.61_{\pm 0.10}$ & $87.05_{\pm 0.24}$ & $\mathbf{91.70_{\pm 0.15}}$ \\
        $\r=0.4$ & $88.64_{\pm 0.13}$ & $84.61_{\pm 0.34}$ & $\mathbf{90.79_{\pm 0.16}}$ \\
    \end{tabular}
\end{table*}

In this subsection, we evaluate the practical behavior of the proposed \ref{eq:usam-sps} on standard image-classification benchmarks. We train ResNet-20 and ResNet-32 models \citep{he2016deep} on CIFAR-10 and CIFAR-100 \citep{krizhevsky2009learning} for $100$ epochs using cross-entropy loss and mini-batches of size $\tau=128$. We apply standard data augmentation (random crop and random horizontal flip) followed by normalization \citep{devries2017improved}. Unless stated otherwise, we report results with weight decay $\texttt{wd}=5\cdot 10^{-4}$ (additional results, including $\texttt{wd}=0.0$, are provided in Appendix \ref{sec:more-exps}). All experiments are run on NVIDIA RTX 6000 Ada GPUs. Each configuration is repeated over three seeds, and we report the best test accuracy over epochs (mean $\pm$ one standard deviation).

\paragraph{Baselines and tuning protocol.}
We compare \ref{eq:usam} equipped with the proposed \ref{eq:usam-sps} against two widely used learning-rate baselines: (i) \emph{constant step size} \ref{eq:usam} with a tuned learning rate $\g$, and (ii) \ref{eq:usam} combined with cosine annealing \citep{loshchilov2017sgdr}. For constant step size \ref{eq:usam}, we tune $\g$ separately for each radius $\r\in\{0.1,0.2,0.3,0.4\}$ via a grid search (we use $\g\in\{10^{-3},10^{-2},10^{-1}\}$, and in our runs $\g=0.1$ is consistently best across all radii). For cosine annealing, we use the PyTorch \citep{paszke2019pytorch} implementation given by: 
\begin{align}
    \label{eq:ca}
    \g_{t+1}=\g_{\min}+\left(\g_t-\g_{\min}\right)\cdot\frac{1+\cos\left(\frac{(t+1)\pi}{T}\right)}{1+\cos\left(\frac{t\pi}{T}\right)},
    \tag{Cosine Annealing}
\end{align}
where $T$ is the number of epochs and $\g_{\min}$ is a hyperparameter. We tune the minimum learning rate $\g_{\min}$ over the same grid $\g_{\min}\in\{10^{-3},10^{-2},10^{-1}\}$ across each radius. Finally, for \ref{eq:usam-sps} we set the mini-batch lower bound to $\ell_{S_t}^*=0$ and $\g_b=1.0$.

\Cref{tab:usam_32_100_0.5} summarizes our CIFAR-100 results with ResNet-32 (additional architectures/datasets are deferred to Appendix \ref{sec:more-exps}). Overall, \ref{eq:usam} equipped with \ref{eq:usam-sps} matches the performance of tuned baselines and typically outperforms both a constant learning rate and \ref{eq:ca}. In addition, as the sharpness radius $\r$ increases, \ref{eq:ca} exhibits a significant fall-off in accuracy, whereas \ref{eq:usam-sps} maintains substantially higher performance. Finally, we emphasize that, in contrast to \ref{eq:ca}, \ref{eq:usam-sps} is a principled choice that follows directly from our theoretical development.

\subsection{SAM-SPS on DNNs}
\label{sec:exps-sam}
So far, our theory and step size construction have focused on the unnormalized update \ref{eq:usam}. In this subsection, we show that the same Polyak-style principle also yields a natural adaptive step size for normalized \ref{eq:nsam}, and we report corresponding experiments on the same benchmarks.

\ref{eq:nsam} differs from \ref{eq:usam} only in the normalization of the perturbation step, but the same Polyak-style upper-bound argument applies. Specializing the derivation of \Cref{sec:polyak-usam-det,sec:polyak-usam-stoch} to the \ref{eq:nsam} update yields the following adaptive step sizes. In the deterministic (full-batch) setting we get $\g_t=\frac{\left[f(e^t)-f^*-\langle\nabla f(e^t),e^t-x^t\rangle\right]_+}{\|\nabla f(e^t)\|^2}$ and in the stochastic mini-batch setting, we use:
\begin{align}
    \label{eq:nsam-sps}
    \g_t=\min\left\{\frac{\left[f_{S_t}(e^t)-\ell_{S_t}^*-\langle\nabla f_{S_t}(e^t),e^t-x^t\rangle\right]_+}{\|\nabla f_{S_t}(e^t)\|^2},\g_b\right\}.
    \tag{Stochastic Polyak Scheduler}
\end{align}
In contrast to the USAM case (cf. \Cref{prop:det-ss}), our non-negativity argument does not directly apply to \ref{eq:nsam}, hence we retain the safeguard $[\cdot]_+$. However, we observe that this safeguard is rarely active in our DNN experiments.

We use the same setup as in \Cref{sec:exps-usam} (architectures, datasets, etc). We compare: (i) \ref{eq:nsam} with a tuned constant learning rate, (ii) \ref{eq:nsam} with \ref{eq:ca}, and (iii) \ref{eq:nsam} with \ref{eq:nsam-sps} with $\ell_{S_t}^*=0$ and $\g_b=1.0$. \Cref{tab:nsam_32_100_0.5} reports the ResNet-32 performance on CIFAR-100 (additional experiments are deferred to Appendix \ref{sec:more-exps}). The same trends observed for \ref{eq:usam} persist for the normalized update: equipping \ref{eq:nsam} with our adaptive rule \ref{eq:nsam-sps} consistently improves upon the constant-step and \ref{eq:ca} variants. Notably, the advantage becomes more pronounced at larger sharpness radii $\rho$, where \ref{eq:ca} deteriorates while \ref{eq:nsam-sps} remains comparatively stable.

\section{Conclusion}
\label{sec:conc}
In this work, we introduced Polyak schedulers for unnormalized Sharpness-Aware Minimization (USAM), yielding closed-form and fully adaptive learning rates computed from quantities available at each iteration. We proved linear convergence for strongly convex objectives and a sublinear $\mathcal{O}(1/T)$ convergence rate for convex objectives in the deterministic case. In the stochastic setting, we establish analogous convergence guarantees up to a neighborhood of the optimum. These results provide a principled approach to reducing learning-rate tuning in SAM-type methods while preserving theoretical guarantees.

Several directions remain open. On the theoretical side, an important next step is to extend the analysis beyond convexity, for instance, to Polyak-Lojasiewicz objectives~\citep{karimi2016linear}, weakly convex, or more general non-convex objectives, while retaining Polyak-style adaptivity. On the algorithmic side, developing principled strategies for selecting or scheduling the sharpness radius $\rho_t$ is a natural direction, as this parameter plays a central role in balancing optimization performance and generalization. More broadly, applying Polyak-scheduled SAM methods to large-scale training regimes, including large language models (LLMs), could further clarify the practical benefits of adaptive, tuning-light, sharpness-aware optimization.

\section*{Impact Statement}
This paper presents work whose goal is to advance the field of Machine Learning. There are many potential societal consequences of our work, none of which we feel must be specifically highlighted here.

\bibliography{ref}
\bibliographystyle{icml2026}


\newpage
\appendix
\onecolumn

\newpage
\part*{Supplementary Material}

The Supplementary Material is organized as follows. \Cref{sec:rel-work} reviews additional related work on adaptive step-size methods. In \Cref{sec:lemmas}, we collect basic definitions and the auxiliary lemmas used throughout. \Cref{sec:proofs-polyak-usam} contains the proofs of the basic properties of \ref{eq:usam-ps}. \Cref{sec:proofs-convergence} provides the proofs of the main theoretical guarantees. Finally, \Cref{sec:more-exps} includes additional experimental results.

\section{Further Related Work on Adaptive Methods}
\label{sec:rel-work}

\subsection{On Polyak-type Step Sizes}

A growing literature aims to eliminate learning-rate tuning by setting steps from readily available quantities (typically the sampled loss value and a gradient norm), while retaining convergence guarantees. In this line, \citet{gower2022cutting} systematize SPS-type rules for SGD, reinterpret them through a Passive--Aggressive/slack-variable lens, and propose \textquote{slack} variants that stabilize Polyak-style updates beyond ideal interpolation regimes. Building on known drawbacks of vanilla SPS in non-interpolated settings, \citet{orvieto2022dynamics} analyzes the dynamics induced by stochastic Polyak step sizes, identifies biases that can prevent exact convergence, and introduces truly adaptive variants (e.g., decreasing-step constructions) that provably converge to the minimizer without requiring problem parameters. Complementarily, \citet{garrigos2023function} studies function-value-driven Polyak variants by formalizing an idealized positive SPS scheme and proposing \textquote{Function Value Learning}, which learns (rather than assumes) the per-sample optimal loss values needed by idealized Polyak rules. To make Polyak-style adaptation compatible with composite objectives, \citet{schaipp2023stochastic} develops a stochastic proximal Polyak step size that incorporates regularization via proximal steps, addressing the practical need for robustness when losses are paired with non-smooth terms.

On the momentum side, \citet{schaipp2024momo} propose MoMo, which uses momentum-based models of sampled losses/gradients (together with truncation via lower bounds) to generate Polyak-type adaptive learning rates that plug naturally into momentum methods. Furthermore, \citet{gower2025analysis} analyze an \emph{idealized} stochastic Polyak method for momentum, yielding general convergence guarantees and applications including black-box model distillation. More directly, \citet{oikonomou2025stochastic} design and analyze Polyak-type step sizes for stochastic heavy-ball momentum, providing guarantees both to neighborhoods (without interpolation) and to the exact minimizer (via adaptive/decreasing variants) without prior knowledge of smoothness/strong convexity constants.

For non-smooth optimization, \citet{oikonomou2025safeguarded} introduce safeguarded SPS rules for stochastic (sub)gradient methods that avoid failure modes from small/vanishing (sub)gradients and prove robust performance guarantees in non-smooth settings. Beyond standard SGD, \citet{d2023stochastic} generalize Polyak-style adaptation to mirror descent via a mirror stochastic Polyak step size, deriving convergence results and adaptive variants in non-Euclidean geometries. At the distributed scale, \citet{mukherjee2024locally} study locally adaptive learning rates in federated learning, showing how Polyak/SPS-flavored local adaptation can improve convergence under heterogeneity. Finally, \citet{orvieto2024adaptive} propose a related tuning-free adaptive stochastic gradient method based on non-negative Gauss--Newton step sizes, offering another principled route to function-value-informed step selection.

\subsection{Adaptive methods beyond Polyak step sizes}

A separate and widely used family of adaptive methods sets the step size based on running statistics of past gradients rather than on the loss value. AdaGrad \citep{duchi2011adaptive,ward2020adagrad} scales the learning rate inversely by the sum of squared gradients seen so far, which is particularly effective on sparse problems. More recently, \citep{choudhury2024remove}  proposes a scale-invariant variant that removes the square-root normalization from AdaGrad's denominator, yielding a simpler and computationally efficient adaptive update. RMSProp \citep{tieleman2012lecture} and Adam/AdamW \citep{kingma2014adam,loshchilov2019decoupled} replace this sum with exponentially weighted moving averages of the first and second gradient moments, and have become the de facto choice for training deep neural networks. The theoretical analysis of these methods has been refined over time, including corrections to the original Adam analysis and tighter conditions under which they match or fall short of SGD rates \citep{reddi2019convergence,defossez2020simple}. Variance-reduced adaptive schemes provide another route to adaptivity: AI-SARAH~\citep{shi2021ai} combines stochastic recursive gradient estimators with adaptive and implicit step-size choices, reducing sensitivity to manually tuned learning-rate schedules while preserving the benefits of SARAH-type variance reduction.

A more recent and conceptually distinct direction is \emph{parameter-free} or \emph{learning-rate-free} optimization. \citet{defazio2023learning} propose D-Adaptation, an SGD-style method that maintains a running lower bound on the initial distance to the optimum, $D=\|x^0-x^*\|$, and uses it to set the step size online, achieving the optimal non-asymptotic rate for convex Lipschitz problems without any prior knowledge of $D$. Building on this construction, \citet{mishchenko2023prodigy} introduce Prodigy, which improves on D-Adaptation through a sharper estimator and weighted averaging, closing the gap to optimally tuned SGD up to logarithmic factors and performing strongly in deep-learning benchmarks. In a separate line of work, \citet{malitsky2020adaptive} propose an adaptive gradient descent scheme that requires neither line-search nor knowledge of the smoothness constant: the step size is updated from the current and previous iterates via a local Lipschitz estimate, and the method is shown to converge on smooth convex problems and to extend naturally to the proximal and accelerated settings.

\newpage
\section{Technical Preliminaries}
\label{sec:lemmas}
\subsection{Basic Definitions and Lemmas}
This section collects definitions and standard inequalities that are used throughout the paper. Unless stated otherwise, $\|\cdot\|$ denotes the Euclidean norm.

\begin{definition}[Convexity]
    A differentiable function $f:\R^d\to\R$ is \emph{convex} if
    \begin{align}
        f(x)\geq f(y)+\langle\nabla f(y),x-y\rangle\qquad\forall\,x,y\in\R^d.\label{eq:convex}
    \end{align}
    Interchanging $x$ and $y$ in \eqref{eq:convex} and adding the two inequalities we get:
    \begin{align}
        \langle\nabla f(x)-\nabla f(y),x-y\rangle\geq0\qquad\forall\,x,y\in\R^d.\label{eq:convex-double}
    \end{align}
    We say that $f$ is \emph{$\m$-strongly convex} if and only if the function $g:\R^d\to\R$ defined by
    \begin{align*}
        g(x)=f(x)-\frac{\m}{2}\|x\|^2
    \end{align*}
    is convex.
\end{definition}

\begin{definition}[$L$-smoothness]
    A differentiable function $f:\R^d\to\R$ is \emph{$L$-smooth} if there exists $L>0$ such that
    \begin{align}
        \|\nabla f(x)-\nabla f(y)\|\leq L\|x-y\|\qquad\forall\,x,y\in\R^d.\label{eq:smooth}
    \end{align}
    Equivalently, $f$ is $L$-smooth if and only if it satisfies the descent lemma
    \begin{align}
        f(x)\leq f(y)+\langle\nabla f(y),x-y\rangle+\frac{L}{2}\|x-y\|^2\qquad\forall\,x,y\in\R^d.\label{eq:smooth-more}
    \end{align}
\end{definition}

The following lemmas are standard, so we omit the proofs.

\begin{lemma}[Properties of strong convexity]
    \label{lem:st-convexity-prop}
    If $f$ is $\m$-strongly convex, then for all $x,y\in\R^d$ we have
    \begin{align}
        \langle\nabla f(x)-\nabla f(y),x-y\rangle\geq\m\|x-y\|^2.\label{eq:st-convex-double}
    \end{align}
    Moreover, for any $x^*\in\arg\min f$ (which implies $\nabla f(x^*)=0$) the following hold:
    \begin{align}
        \frac{\m}{2}\|x-x^*\|^2&\leq f(x)-f^*\leq\frac{1}{2\m}\|\nabla f(x)\|^2,\qquad\text{(Quadratic growth / PL bound)}\label{eq:st-convex-qg}\\
        \m\|x-x^*\|&\leq\|\nabla f(x)\|.\qquad\text{(Error bound)}\label{eq:st-convex-eb}
    \end{align}
\end{lemma}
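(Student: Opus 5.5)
The statement to prove is \Cref{lem:st-convexity-prop}. I will deduce all three inequalities from the convexity of $g(x)=f(x)-\frac{\m}{2}\|x\|^2$ combined with \eqref{eq:convex} and \eqref{eq:convex-double}.

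\emph{First-order inequalities.} I begin with \eqref{eq:convex-double} applied to $g$. Since $\nabla g(x)=\nabla f(x)-\m x$, this gives $\langle\nabla f(x)-\nabla f(y)-\m(x-y),x-y\rangle\ge0$, which is exactly \eqref{eq:st-convex-double}.

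Next I apply \eqref{eq:convex} to $g$ and expand the quadratic terms. This yields the strong convexity lower bound
\begin{align*}
f(x)\ge f(y)+\langle\nabla f(y),x-y\rangle+\tfrac{\m}{2}\|x-y\|^2 .
\end{align*}
The identity behind this is $\tfrac{\m}{2}\|x\|^2-\tfrac{\m}{2}\|y\|^2-\m\langle y,x-y\rangle=\tfrac{\m}{2}\|x-y\|^2$.

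\emph{Consequences at a minimizer.} Take $x^*\in\arg\min f$; since $f$ is differentiable, $\nabla f(x^*)=0$.
\begin{itemize}
\item \emph{Quadratic growth.} Setting $y=x^*$ in the lower bound gives the left inequality of \eqref{eq:st-convex-qg}.
\item \emph{PL bound.} Setting $y=x$ and minimizing the right-hand side over $x$, with minimizer $x=y-\nabla f(y)/\m$, gives $f^*\ge f(y)-\frac{1}{2\m}\|\nabla f(y)\|^2$. This is the right inequality.
\item \emph{Error bound.} Setting $y=x^*$ in \eqref{eq:st-convex-double} and applying Cauchy--Schwarz gives $\m\|x-x^*\|^2\le\|\nabla f(x)\|\,\|x-x^*\|$. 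Dividing by $\|x-x^*\|$ when it is nonzero yields \eqref{eq:st-convex-eb}; the case $x=x^*$ is trivial.
\end{itemize}

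The only step that needs any care is the PL bound. The minimization over $x$ must be done on the quadratic lower model, not on $f$ itself: one uses $f^*=\inf_x f(x)\ge\inf_x[\text{model}]$. With that ordering the argument is direct.
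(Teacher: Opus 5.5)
Your proof is correct and complete. The paper omits this proof as standard, and you give exactly the standard derivation: you obtain \eqref{eq:st-convex-double} and the quadratic lower model $f(x)\geq f(y)+\langle\nabla f(y),x-y\rangle+\frac{\m}{2}\|x-y\|^2$ from the first-order convexity of $g$. You then specialize at $x^*$ for quadratic growth and the error bound, and minimize the model over $x$ for the PL half.

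The only blemish is the phrase ``setting $y=x$'' in the PL step. It should say that you fix the point of interest in the role of $y$ and minimize over the free variable. This is a relabeling, not a substitution, since literally putting $y=x$ gives a vacuous inequality.

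As an aside, the error bound also follows immediately by chaining the two halves of \eqref{eq:st-convex-qg}.
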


\begin{lemma}[Properties of smoothness]
    \label{lem:smoot-prop}
    If $f$ is $L$-smooth, then for all $x,y\in\R^d$ we have
    \begin{align}
        \langle\nabla f(x)-\nabla f(y),x-y\rangle\leq L\|x-y\|^2.\label{eq:smooth-double}
    \end{align}
    Moreover, for any $x^*\in\arg\min f$ the following hold:
    \begin{align}
        \frac{1}{2L}\|\nabla f(x)\|^2&\leq f(x)-f^*\leq\frac{L}{2}\|x-x^*\|^2,\label{eq:smooth-qg}\\
        \|\nabla f(x)\|&\leq L\|x-x^*\|.\label{eq:smooth-eb}
    \end{align}
\end{lemma}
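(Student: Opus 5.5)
The plan is to derive every item in the statement from the two characterizations of $L$-smoothness already recorded, namely the Lipschitz gradient condition \eqref{eq:smooth} and the descent lemma \eqref{eq:smooth-more}, together with the first-order optimality condition $\nabla f(x^*)=0$ at any $x^*\in\arg\min f$. No convexity will be used, which matters because the lemma is stated for merely $L$-smooth $f$.

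\emph{Inner product bound.} For \eqref{eq:smooth-double}, I apply Cauchy--Schwarz and then \eqref{eq:smooth}:
\begin{align*}
\langle\nabla f(x)-\nabla f(y),x-y\rangle\le\|\nabla f(x)-\nabla f(y)\|\,\|x-y\|\le L\|x-y\|^2 .
\end{align*}

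\emph{Upper bounds in \eqref{eq:smooth-qg} and \eqref{eq:smooth-eb}.} Setting $y=x^*$ in \eqref{eq:smooth} and using $\nabla f(x^*)=0$ gives $\|\nabla f(x)\|\le L\|x-x^*\|$ immediately. For the right half of \eqref{eq:smooth-qg}, I instantiate the descent lemma \eqref{eq:smooth-more} with base point $y=x^*$. The linear term vanishes, which leaves $f(x)\le f^*+\frac{L}{2}\|x-x^*\|^2$.

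\emph{Lower bound in \eqref{eq:smooth-qg}.} This is the one step that needs a small construction, though it is standard: evaluate the descent lemma at a gradient step from $x$. Put $z=x-\frac1L\nabla f(x)$ and apply \eqref{eq:smooth-more} with base point $x$:
\begin{align*}
f(z)\le f(x)-\frac1L\|\nabla f(x)\|^2+\frac{1}{2L}\|\nabla f(x)\|^2=f(x)-\frac{1}{2L}\|\nabla f(x)\|^2 .
\end{align*}
Since $f^*=\min f\le f(z)$, rearranging gives $\frac{1}{2L}\|\nabla f(x)\|^2\le f(x)-f^*$.

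I do not expect a genuine obstacle anywhere in this argument. The only point to watch is to use the global minimality of $f^*$, rather than convexity, in that last comparison $f^*\le f(z)$, so that the statement holds for all $L$-smooth functions with a minimizer.
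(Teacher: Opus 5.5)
Your proof is correct, and it is the standard argument the paper has in mind when it calls these lemmas standard and omits their proofs. You use Cauchy--Schwarz with \eqref{eq:smooth} for \eqref{eq:smooth-double} and \eqref{eq:smooth-eb}, the descent lemma at $y=x^*$ for the upper half of \eqref{eq:smooth-qg}, and the descent lemma at $z=x-\frac{1}{L}\nabla f(x)$ for the lower half. Because your lower bound uses only $f^*\le f(z)$, it holds unchanged with any lower bound $\ell^*$ in place of $f^*$, which is the form the paper later applies to $f_{S_t}$ with $\ell_{S_t}^*$ in the stochastic proofs.
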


\subsection{Auxiliary lemmas for the convergence analysis}
We now record several inequalities specific to the \ref{eq:usam} dynamics that will be used repeatedly in the convergence proofs. Throughout this subsection, we consider the deterministic \ref{eq:usam} iterates
\begin{align}
    e^t&=x^t+\r_t\nabla f(x^t), \label{eq:usam-1}\\
    x^{t+1}&=x^t-\g_t\nabla f(e^t). \label{eq:usam-2}
\end{align}

\begin{remark}[On the case $\r_t=0$]
    Some proofs below divide by $\r_t$. When $\r_t=0$, \ref{eq:usam} reduces to gradient descent and we have $e^t=x^t$. In that case, all inequalities stated in the lemmas below remain valid (typically as equalities). Thus, whenever a division by $\r_t$ appears, it should be interpreted as applying to the case $\r_t>0$, with the case $\r_t=0$ handled separately as above.
\end{remark}

\begin{lemma}[Convex case]
    \label{lem:convex}
    Suppose that $f$ is convex. Then the iterates \eqref{eq:usam-1}--\eqref{eq:usam-2} satisfy
    \begin{align}
        \langle\nabla f(e^t),\nabla f(x^t)\rangle&\geq\|\nabla f(x^t)\|^2,\label{eq:lem-convex-1}\\
        \r_t\langle\nabla f(e^t),\nabla f(x^t)\rangle&\geq f(e^t)-f(x^t).\label{eq:lem-convex-2}
    \end{align}
\end{lemma}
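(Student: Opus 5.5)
Both inequalities in \Cref{lem:convex} follow from elementary convexity facts applied at the pair $(x^t,e^t)$, together with the identity $e^t-x^t=\r_t\nabla f(x^t)$ from \eqref{eq:usam-1}. The case $\r_t=0$ is trivial, since then $e^t=x^t$, so \eqref{eq:lem-convex-1} holds with equality and \eqref{eq:lem-convex-2} reads $0\geq 0$. We therefore assume $\r_t>0$ throughout.

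For \eqref{eq:lem-convex-1}, I apply the monotonicity of the gradient \eqref{eq:convex-double} with $x=e^t$ and $y=x^t$:
\begin{align*}
    \langle\nabla f(e^t)-\nabla f(x^t),e^t-x^t\rangle\geq0 .
\end{align*}
Substituting $e^t-x^t=\r_t\nabla f(x^t)$ gives $\r_t\bigl(\langle\nabla f(e^t),\nabla f(x^t)\rangle-\|\nabla f(x^t)\|^2\bigr)\geq0$. Dividing by $\r_t>0$ yields \eqref{eq:lem-convex-1}.

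For \eqref{eq:lem-convex-2}, I use the first-order convexity inequality \eqref{eq:convex} with $x=x^t$ and $y=e^t$:
\begin{align*}
    f(x^t)\geq f(e^t)+\langle\nabla f(e^t),x^t-e^t\rangle=f(e^t)-\r_t\langle\nabla f(e^t),\nabla f(x^t)\rangle .
\end{align*}
Rearranging gives exactly \eqref{eq:lem-convex-2}.

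I expect no genuine obstacle here. The only points needing care are choosing the correct base point in \eqref{eq:convex-double} and \eqref{eq:convex} (linearizing at $e^t$, not at $x^t$, for the second inequality) and handling the division by $\r_t$ via the remark on $\r_t=0$.
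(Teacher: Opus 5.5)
Your proposal is correct and follows the paper's proof: gradient monotonicity \eqref{eq:convex-double} at the pair $(e^t,x^t)$ with $e^t-x^t=\r_t\nabla f(x^t)$ and division by $\r_t>0$ for \eqref{eq:lem-convex-1}, and the first-order convexity inequality \eqref{eq:convex} linearized at $e^t$ for \eqref{eq:lem-convex-2}. The trivial case $\r_t=0$ is handled separately in the same way as in the paper.
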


\begin{proof}
    If $\r_t=0$, then $e^t=x^t$ and both inequalities hold with equality. Assume henceforth that $\r_t>0$. For \eqref{eq:lem-convex-1}, we write
    \begin{align*}
        \langle\nabla f(e^t),\nabla f(x^t)\rangle
        \overset{}&{=}\langle\nabla f(e^t)-\nabla f(x^t),\nabla f(x^t)\rangle+\|\nabla f(x^t)\|^2 \\
        \overset{\eqref{eq:usam-1}}&{=}\frac{1}{\r_t}\langle\nabla f(e^t)-\nabla f(x^t),e^t-x^t\rangle+\|\nabla f(x^t)\|^2 \\
        \overset{\eqref{eq:convex-double}}&{\geq}\|\nabla f(x^t)\|^2.
    \end{align*}
    For \eqref{eq:lem-convex-2}, we have
    \begin{align*}
        \r_t\langle\nabla f(e^t),\nabla f(x^t)\rangle
        \overset{\eqref{eq:usam-1}}&{=}\langle\nabla f(e^t),e^t-x^t\rangle\\
        \overset{\eqref{eq:convex}}&{\geq}f(e^t)-f(x^t),
    \end{align*}
    as wanted.
\end{proof}

\begin{lemma}[Smooth case]
    \label{lem:smooth}
    Suppose that $f$ is $L$-smooth. Then the iterates \eqref{eq:usam-1}--\eqref{eq:usam-2} satisfy
    \begin{align}
        \langle\nabla f(e^t),\nabla f(x^t)\rangle&\leq(1+L\r_t)\|\nabla f(x^t)\|^2,\label{eq:lem-smooth-1}\\
        \r_t\langle\nabla f(e^t),\nabla f(x^t)\rangle&\leq f(e^t)-f(x^t)+\frac{L\r_t^2}{2}\|\nabla f(x^t)\|^2,\label{eq:lem-smooth-2}\\
        (1-L\r_t)\|\nabla f(x^t)\|&\leq\|\nabla f(e^t)\|\leq(1+L\r_t)\|\nabla f(x^t)\|.\label{eq:lem-smooth-3}
    \end{align}
\end{lemma}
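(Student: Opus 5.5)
The statement to prove is Lemma~\ref{lem:smooth}: three inequalities, each a direct consequence of $L$-smoothness together with the update $e^t-x^t=\r_t\nabla f(x^t)$ from \eqref{eq:usam-1}. As in the proof of Lemma~\ref{lem:convex}, I will first dispose of the case $\r_t=0$. There $e^t=x^t$, so \eqref{eq:lem-smooth-1} and \eqref{eq:lem-smooth-2} hold with equality and \eqref{eq:lem-smooth-3} is trivial. I then assume $\r_t>0$.

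\textbf{Inequality \eqref{eq:lem-smooth-1}.} I mirror the convex argument:
\begin{align*}
\langle\nabla f(e^t),\nabla f(x^t)\rangle=\|\nabla f(x^t)\|^2+\frac{1}{\r_t}\langle\nabla f(e^t)-\nabla f(x^t),e^t-x^t\rangle .
\end{align*}
Now apply \eqref{eq:smooth-double} to the inner product. It is bounded by $\frac{L}{\r_t}\|e^t-x^t\|^2=L\r_t\|\nabla f(x^t)\|^2$, which gives \eqref{eq:lem-smooth-1}.

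\textbf{Inequality \eqref{eq:lem-smooth-2}.} I use the descent lemma \eqref{eq:smooth-more} with the roles chosen so that the gradient is evaluated at $e^t$: take $y=e^t$ and $x=x^t$. This gives
\begin{align*}
f(x^t)\le f(e^t)+\langle\nabla f(e^t),x^t-e^t\rangle+\frac{L}{2}\|x^t-e^t\|^2 .
\end{align*}
Substituting $x^t-e^t=-\r_t\nabla f(x^t)$ and rearranging yields
\begin{align*}
\r_t\langle\nabla f(e^t),\nabla f(x^t)\rangle\le f(e^t)-f(x^t)+\frac{L\r_t^2}{2}\|\nabla f(x^t)\|^2 .
\end{align*}

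\textbf{Inequality \eqref{eq:lem-smooth-3}.} This follows from the triangle inequality and \eqref{eq:smooth}:
\begin{align*}
\bigl|\|\nabla f(e^t)\|-\|\nabla f(x^t)\|\bigr|\le\|\nabla f(e^t)-\nabla f(x^t)\|\le L\|e^t-x^t\|=L\r_t\|\nabla f(x^t)\| .
\end{align*}
Both sides of \eqref{eq:lem-smooth-3} are read off from this. The lower bound is only informative when $L\r_t\le1$, but it holds regardless, since the left side is then nonpositive.

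There is no real obstacle in this argument. The only point needing care is \eqref{eq:lem-smooth-2}: the descent lemma must be applied with the linearization at $e^t$, not at $x^t$, so that the inner product $\langle\nabla f(e^t),\nabla f(x^t)\rangle$ appears with the correct sign.
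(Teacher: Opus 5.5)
Your proposal is correct and follows the paper's proof essentially line for line: you dispose of $\r_t=0$ first, write the inner-product decomposition and apply \eqref{eq:smooth-double} for \eqref{eq:lem-smooth-1}, and apply the descent lemma \eqref{eq:smooth-more} with $(x,y)=(x^t,e^t)$ for \eqref{eq:lem-smooth-2}. For \eqref{eq:lem-smooth-3} you use the triangle inequality with \eqref{eq:smooth}, compressed into a single reverse-triangle step, and your remark that the lower bound holds trivially when $L\r_t>1$ is a correct small addition.
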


\begin{proof}
    If $\r_t=0$, then $e^t=x^t$ and all statements are immediate. Assume henceforth that $\r_t>0$. For \eqref{eq:lem-smooth-1}, we proceed as in the convex case:
    \begin{align*}
        \langle\nabla f(e^t),\nabla f(x^t)\rangle
        \overset{}&{=}\langle\nabla f(e^t)-\nabla f(x^t),\nabla f(x^t)\rangle+\|\nabla f(x^t)\|^2 \\
        \overset{\eqref{eq:usam-1}}&{=}\frac{1}{\r_t}\langle\nabla f(e^t)-\nabla f(x^t),e^t-x^t\rangle+\|\nabla f(x^t)\|^2 \\
        \overset{\eqref{eq:smooth-double}}&{\leq}\frac{1}{\r_t}\,L\|e^t-x^t\|^2 + \|\nabla f(x^t)\|^2 \\
        \overset{}&{=}(1+L\r_t)\|\nabla f(x^t)\|^2.
    \end{align*}
    For \eqref{eq:lem-smooth-2}, apply \eqref{eq:smooth-more} with $(x,y)=(x^t,e^t)$ to obtain
    \begin{align*}
        f(x^t)\leq f(e^t)+\langle\nabla f(e^t),x^t-e^t\rangle+\frac{L}{2}\|x^t-e^t\|^2.
    \end{align*}
    Rearranging and using \eqref{eq:usam-1} gives
    \begin{align*}
        \langle\nabla f(e^t),e^t-x^t\rangle\leq f(e^t)-f(x^t)+\frac{L}{2}\|e^t-x^t\|^2=f(e^t)-f(x^t)+\frac{L\r_t^2}{2}\|\nabla f(x^t)\|^2,
    \end{align*}
    which is equivalent to \eqref{eq:lem-smooth-2}. Finally, \eqref{eq:lem-smooth-3} follows from the triangle inequality and \eqref{eq:smooth}:
    \begin{align*}
        \|\nabla f(e^t)\|&\leq\|\nabla f(e^t)-\nabla f(x^t)\|+\|\nabla f(x^t)\|\leq L\|e^t-x^t\|+\|\nabla f(x^t)\|=(1+L\r_t)\|\nabla f(x^t)\|,\\
        \|\nabla f(e^t)\|&\geq\|\nabla f(x^t)\|-\|\nabla f(e^t)-\nabla f(x^t)\|\geq\|\nabla f(x^t)\|-L\|e^t-x^t\|=(1-L\r_t)\|\nabla f(x^t)\|.
    \end{align*}
\end{proof}

Combining \Cref{lem:convex,lem:smooth} yields the following collection of bounds that we will invoke frequently.

\begin{lemma}[Convex and smooth case]
    \label{lem:main}
    Suppose that $f$ is convex and $L$-smooth. Then the iterates \eqref{eq:usam-1}--\eqref{eq:usam-2} satisfy
    \begin{align}
        &\|\nabla f(x^t)\|^2\leq\langle\nabla f(e^t),\nabla f(x^t)\rangle\leq\|\nabla f(e^t)\|^2,\label{eq:lem-main-1}\\
        &\langle\nabla f(e^t),\nabla f(x^t)\rangle\leq(1+L\r_t)\|\nabla f(x^t)\|^2,\label{eq:lem-main-2}\\
        &f(e^t)-f(x^t)\leq\r_t\langle\nabla f(e^t),\nabla f(x^t)\rangle\leq f(e^t)-f(x^t)+\frac{L\r_t^2}{2}\|\nabla f(x^t)\|^2,\label{eq:lem-main-3}\\
        &(1-L\r_t)\|\nabla f(x^t)\|\leq\|\nabla f(e^t)\|\leq(1+L\r_t)\|\nabla f(x^t)\|.\label{eq:lem-main-4}
    \end{align}
\end{lemma}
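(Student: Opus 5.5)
The statement is \Cref{lem:main}. It is essentially a repackaging of \Cref{lem:convex,lem:smooth}, so I would prove it item by item, citing those two lemmas. The only new ingredient is the second inequality in \eqref{eq:lem-main-1}. As elsewhere, the case $\r_t=0$ is trivial because $e^t=x^t$, so I assume $\r_t>0$.

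\textbf{Direct citations.}
\begin{itemize}
\item The first inequality of \eqref{eq:lem-main-1} is \eqref{eq:lem-convex-1}.
\item \eqref{eq:lem-main-2} is \eqref{eq:lem-smooth-1}.
\item The lower bound in \eqref{eq:lem-main-3} is \eqref{eq:lem-convex-2}, and the upper bound is \eqref{eq:lem-smooth-2}.
\item \eqref{eq:lem-main-4} is \eqref{eq:lem-smooth-3}.
\end{itemize}

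\textbf{The remaining inequality.} We need $\langle\nabla f(e^t),\nabla f(x^t)\rangle\leq\|\nabla f(e^t)\|^2$, which is equivalent to $\langle\nabla f(e^t),\nabla f(e^t)-\nabla f(x^t)\rangle\geq0$. Using \eqref{eq:usam-1}, I would write $\nabla f(x^t)=(e^t-x^t)/\r_t$. The tool is co-coercivity of convex $L$-smooth functions:
\[
\langle\nabla f(e^t)-\nabla f(x^t),\,e^t-x^t\rangle\geq\tfrac1L\|\nabla f(e^t)-\nabla f(x^t)\|^2 .
\]
Dividing by $\r_t$ gives
\[
\langle\nabla f(e^t)-\nabla f(x^t),\nabla f(x^t)\rangle\geq\tfrac{1}{L\r_t}\|\nabla f(e^t)-\nabla f(x^t)\|^2 .
\]
Write $d=\nabla f(e^t)-\nabla f(x^t)$. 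Then
\[
\langle\nabla f(e^t),d\rangle=\langle\nabla f(x^t),d\rangle+\|d\|^2\geq\Big(\tfrac{1}{L\r_t}+1\Big)\|d\|^2\geq0 .
\]
So the claim holds for every $\r_t\geq0$, with no restriction on $\r_t$.

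\textbf{Main obstacle.} The one delicate point is that co-coercivity is not among the stated preliminaries. I would justify it in one line as the standard consequence of convexity plus $L$-smoothness (the Baillon--Haddad theorem), in the same spirit as \Cref{lem:smoot-prop}. If I wanted to avoid it, a fallback is \eqref{eq:convex-double}, which gives $\langle d,\nabla f(x^t)\rangle\geq0$ and hence $\langle\nabla f(e^t),d\rangle\geq\|d\|^2\geq0$ directly. This fallback uses only monotonicity and is the cleaner route, so I would actually present it.
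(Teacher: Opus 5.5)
Your proof is correct and has the same structure as the paper's. Every item except the upper bound in \eqref{eq:lem-main-1} is cited from \Cref{lem:convex,lem:smooth}, and that upper bound rests on the same monotonicity fact $\langle\nabla f(e^t)-\nabla f(x^t),\nabla f(x^t)\rangle\geq0$, which is \eqref{eq:lem-convex-1}. The only difference is the last step: the paper applies Cauchy--Schwarz twice through the intermediate bound $\|\nabla f(x^t)\|\leq\|\nabla f(e^t)\|$, whereas your fallback uses the expansion $\langle\nabla f(e^t),d\rangle=\langle\nabla f(x^t),d\rangle+\|d\|^2\geq\|d\|^2$ with $d=\nabla f(e^t)-\nabla f(x^t)$; this is equally elementary and even gives a quantitative gap, so the co-coercivity detour is unnecessary.
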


\begin{proof}
    If $\r_t=0$, then $e^t=x^t$ and all inequalities hold trivially. Assume henceforth that $\r_t>0$. The lower bound in \eqref{eq:lem-main-1} is \eqref{eq:lem-convex-1}. Applying Cauchy--Schwarz to \eqref{eq:lem-convex-1} gives
    \begin{align*}
        \|\nabla f(x^t)\|^2\leq\langle\nabla f(e^t),\nabla f(x^t)\rangle\leq\|\nabla f(e^t)\|\,\|\nabla f(x^t)\|,
    \end{align*}
    hence $\|\nabla f(x^t)\|\leq\|\nabla f(e^t)\|$. The upper bound in \eqref{eq:lem-main-1} then follows by another application of Cauchy--Schwarz:
    \begin{align*}
        \langle\nabla f(e^t),\nabla f(x^t)\rangle\leq\|\nabla f(e^t)\|\,\|\nabla f(x^t)\|\leq\|\nabla f(e^t)\|^2.
    \end{align*}
    The remaining statements are direct consequences of \Cref{lem:convex,lem:smooth}: \eqref{eq:lem-main-2} follows from \eqref{eq:lem-smooth-1}, \eqref{eq:lem-main-3} from \eqref{eq:lem-convex-2} and \eqref{eq:lem-smooth-2}, and \eqref{eq:lem-main-4} from \eqref{eq:lem-smooth-3}.
\end{proof}

\newpage
\section{Proofs for Section \ref{sec:polyak-usam}: Non-negativity and descent property of Polyak Scheduler}
\label{sec:proofs-polyak-usam}
This section provides the proofs of the basic properties established in \Cref{sec:polyak-usam}. For completeness, we restate the two propositions proved below.

\begin{proposition}[Non-negativity and lower bound for \ref{eq:usam-ps}]
    Let $f$ be convex and $L$-smooth. If $\r_t\leq 1/L$, then
    \begin{align*}
        \g_t\geq\frac{1-L\r_t}{2L(1+L\r_t)}\geq0.
    \end{align*}
    In particular, in this regime the ReLU safeguard is redundant and the Polyak step size for the deterministic \ref{eq:usam} can be written as
    \begin{align*}
        \g_t=\frac{f(e^t)-f^*-\r_t\langle\nabla f(e^t),\nabla f(x^t)\rangle}{\|\nabla f(e^t)\|^2}.
    \end{align*}
\end{proposition}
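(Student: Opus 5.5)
The plan is to lower-bound the numerator of the unclipped step size by a multiple of $\|\nabla f(x^t)\|^2$, and to upper-bound the denominator $\|\nabla f(e^t)\|^2$ by another multiple of $\|\nabla f(x^t)\|^2$, using \Cref{lem:main}. If $\nabla f(x^t)=0$, then $e^t=x^t$ is a minimizer and the step is irrelevant, so assume $\nabla f(x^t)\neq 0$; by \eqref{eq:lem-main-1} this also gives $\nabla f(e^t)\neq0$.

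For the numerator, split
\begin{align*}
f(e^t)-f^*-\r_t\langle\nabla f(e^t),\nabla f(x^t)\rangle=\bigl(f(x^t)-f^*\bigr)+\bigl(f(e^t)-f(x^t)-\r_t\langle\nabla f(e^t),\nabla f(x^t)\rangle\bigr).
\end{align*}
The upper inequality in \eqref{eq:lem-main-3} bounds the second bracket below by $-\frac{L\r_t^2}{2}\|\nabla f(x^t)\|^2$. The first bracket is at least $\frac{1}{2L}\|\nabla f(x^t)\|^2$ by \eqref{eq:smooth-qg}. Hence the numerator is at least
\begin{align*}
\tfrac{1}{2L}(1-L^2\r_t^2)\|\nabla f(x^t)\|^2=\tfrac{1}{2L}(1-L\r_t)(1+L\r_t)\|\nabla f(x^t)\|^2,
\end{align*}
which is nonnegative precisely because $\r_t\le 1/L$.

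For the denominator, \eqref{eq:lem-main-4} gives $\|\nabla f(e^t)\|^2\le(1+L\r_t)^2\|\nabla f(x^t)\|^2$. Dividing the numerator bound by this, and noting the numerator bound is nonnegative so the inequality direction is preserved, yields
\begin{align*}
\g_t\ge\frac{(1-L\r_t)(1+L\r_t)}{2L(1+L\r_t)^2}=\frac{1-L\r_t}{2L(1+L\r_t)}\ge0.
\end{align*}
Since the unclipped quantity is already nonnegative, $[\cdot]_+$ acts as the identity, and the ReLU safeguard can be dropped.

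The main obstacle is choosing the right lower bound for $f(x^t)-f^*$. A convexity-only argument would give just $f(e^t)-f^*-\r_t\langle\cdot\rangle\ge f(x^t)-f^*-\frac{L\r_t^2}{2}\|\nabla f(x^t)\|^2$, which is not obviously nonnegative. The smoothness-based bound \eqref{eq:smooth-qg} is exactly what makes the factor $1-L^2\r_t^2$ appear, and it is where the condition $\r_t\le 1/L$ is used.
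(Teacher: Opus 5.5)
Your proposal is correct and follows essentially the same route as the paper's proof: you lower-bound the numerator by $\frac{1-L^2\r_t^2}{2L}\|\nabla f(x^t)\|^2$ using the upper half of \eqref{eq:lem-main-3} together with \eqref{eq:smooth-qg}, then upper-bound the denominator using \eqref{eq:lem-main-4}. Invoking \eqref{eq:lem-main-1} to guarantee $\nabla f(e^t)\neq0$ is a small improvement, since it also covers the edge case $\r_t=1/L$ that the paper passes over; the one slip is in your closing remark, which calls the intermediate bound ``convexity-only'' when it actually comes from the smoothness half of \eqref{eq:lem-main-3}.
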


\begin{proof}[Proof of \Cref{prop:det-ss}]
    We first show that the numerator in \ref{eq:usam-ps} is non-negative when $\r_t\leq1/L$. We have
    \begin{align}
        \label{eq:cruc}
        f(e^t)-f^*-\r_t\langle\nabla f(e^t),\nabla f(x^t)\rangle
        \overset{\eqref{eq:lem-main-3}}&{\geq}f(e^t)-f^*-\left(f(e^t)-f(x^t)+\frac{L\r_t^2}{2}\|\nabla f(x^t)\|^2\right)\nonumber\\
        &=f(x^t)-f^*-\frac{L\r_t^2}{2}\|\nabla f(x^t)\|^2\nonumber\\
        \overset{\eqref{eq:smooth-qg}}&{\geq}\frac{1}{2L}\|\nabla f(x^t)\|^2-\frac{L\r_t^2}{2}\|\nabla f(x^t)\|^2\nonumber\\
        &=\frac{1-L^2\r_t^2}{2L}\,\|\nabla f(x^t)\|^2\geq0.
    \end{align}
    Hence, in this regime,
    \begin{align*}
        \left[f(e^t)-f^*-\r_t\langle\nabla f(e^t),\nabla f(x^t)\rangle\right]_+=f(e^t)-f^*-\r_t\langle\nabla f(e^t),\nabla f(x^t)\rangle,
    \end{align*}
    so the ReLU can be dropped. Next, assume $\nabla f(x^t)\neq 0$ (otherwise $x^t\in X^*$ and the algorithm terminates). Then, by definition,
    \begin{align*}
        \g_t
        &=\frac{f(e^t)-f^*-\r_t\langle\nabla f(e^t),\nabla f(x^t)\rangle}{\|\nabla f(e^t)\|^2}\\
        \overset{\eqref{eq:cruc}}&{\geq}\frac{\frac{1-L^2\r_t^2}{2L}\|\nabla f(x^t)\|^2}{\|\nabla f(e^t)\|^2}\\
        \overset{\eqref{eq:lem-main-4}}&{\geq}\frac{\frac{1-L^2\r_t^2}{2L}\|\nabla f(x^t)\|^2}{(1+L\r_t)^2\|\nabla f(x^t)\|^2}\\
        &=\frac{1-L^2\r_t^2}{2L(1+L\r_t)^2}\\
        &=\frac{1-L\r_t}{2L(1+L\r_t)}.
    \end{align*}
    This concludes the proof.
\end{proof}

\begin{proposition}[Descent property of \ref{eq:usam-ps}]
    Let $f$ be convex and $L$-smooth, and suppose $\r_t\leq1/L$. Then the iterates generated by deterministic \ref{eq:usam} with \ref{eq:usam-ps} satisfy, for all $t\geq0$,
    \begin{align*}
        \|x^{t+1}-x^*\|^2\leq\|x^t-x^*\|^2-\frac{(1-L\r_t)^2}{2L}\left(f(x^t)-f^*\right).
    \end{align*}
    In particular, the sequence $\{\|x^t-x^*\|\}_{t\geq 0}$ is non-increasing.
\end{proposition}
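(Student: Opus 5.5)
We start from the expansion already carried out in the derivation of the \ref{eq:usam-ps} in \Cref{sec:polyak-usam-det}. Convexity of $f$ and the \ref{eq:usam} update give
\begin{align*}
\|x^{t+1}-x^*\|^2-\|x^t-x^*\|^2\leq-2\g_t N_t+\g_t^2\|\nabla f(e^t)\|^2,
\end{align*}
where $N_t:=f(e^t)-f^*-\r_t\langle\nabla f(e^t),\nabla f(x^t)\rangle$.

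By \Cref{prop:det-ss} we have $N_t\geq0$ and $\g_t=N_t/\|\nabla f(e^t)\|^2$. Substituting this step size gives
\begin{align*}
\|x^{t+1}-x^*\|^2\leq\|x^t-x^*\|^2-\frac{N_t^2}{\|\nabla f(e^t)\|^2}=\|x^t-x^*\|^2-\g_t N_t .
\end{align*}
If $\nabla f(x^t)=0$, the claim is trivial, so we assume $\nabla f(x^t)\neq0$. It then remains to show $\g_t N_t\geq\frac{(1-L\r_t)^2}{2L}(f(x^t)-f^*)$.

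For this we bound each factor separately.

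\textbf{Step size.} \Cref{prop:det-ss} gives $\g_t\geq\frac{1-L\r_t}{2L(1+L\r_t)}$.

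\textbf{Numerator.} From \eqref{eq:lem-main-3} we get $N_t\geq f(x^t)-f^*-\frac{L\r_t^2}{2}\|\nabla f(x^t)\|^2$. Using \eqref{eq:smooth-qg} in the form $\|\nabla f(x^t)\|^2\leq 2L(f(x^t)-f^*)$, this becomes
\begin{align*}
N_t\geq(1-L^2\r_t^2)(f(x^t)-f^*).
\end{align*}

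\textbf{Product.} Multiplying the two bounds,
\begin{align*}
\g_tN_t\geq\frac{(1-L\r_t)(1-L\r_t)(1+L\r_t)}{2L(1+L\r_t)}(f(x^t)-f^*)=\frac{(1-L\r_t)^2}{2L}(f(x^t)-f^*),
\end{align*}
which is exactly the required constant. The two bounds are combined after applying smoothness in opposite directions, and the $(1+L\r_t)$ factors cancel cleanly.

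Non-increase of $\|x^t-x^*\|$ then follows because $f(x^t)-f^*\geq0$ and $(1-L\r_t)^2\geq0$.

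The only delicate point is that the numerator must be lower-bounded in terms of $f(x^t)-f^*$, not of $\|\nabla f(x^t)\|^2$ as in \eqref{eq:cruc}. This requires the upper bound $\|\nabla f(x^t)\|^2\leq 2L(f(x^t)-f^*)$, the opposite direction from the one used in \Cref{prop:det-ss}, and I would double-check this sign carefully.
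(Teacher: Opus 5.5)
Your proof is correct and follows essentially the paper's argument: the same expansion, the same numerator bound $N_t\geq(1-L^2\r_t^2)(f(x^t)-f^*)$ via \eqref{eq:lem-main-3} and \eqref{eq:smooth-qg}, and the same constant. The only difference is that you write $N_t^2/\|\nabla f(e^t)\|^2=\g_tN_t$ and reuse the step-size bound of \Cref{prop:det-ss}, whereas the paper bounds $\|\nabla f(e^t)\|^2\leq 2L(1+L\r_t)^2(f(x^t)-f^*)$ directly. The sign you wanted to double-check is fine, since both uses are the same inequality $\|\nabla f(x^t)\|^2\leq 2L(f(x^t)-f^*)$ from \eqref{eq:smooth-qg}, and multiplying the two lower bounds is legitimate because both are non-negative when $\r_t\leq 1/L$.
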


\begin{proof}[Proof of \Cref{prop:det-descent}]
    Assume $\nabla f(x^t)\neq 0$ (otherwise $x^t\in X^*$ and the algorithm terminates). Starting from the USAM update \eqref{eq:usam-2} and expanding the squared distance gives
    \begin{align*}
        \|x^{t+1}-x^*\|^2
        &=\|x^t-x^*\|^2-2\g_t\langle\nabla f(e^t),x^t-x^*\rangle+\g_t^2\|\nabla f(e^t)\|^2\\
        &=\|x^t-x^*\|^2-2\g_t\left(\langle\nabla f(e^t),e^t-x^*\rangle-\langle\nabla f(e^t),e^t-x^t\rangle\right)+\g_t^2\|\nabla f(e^t)\|^2\\
        \overset{\eqref{eq:convex}}&{\leq}\|x^t-x^*\|^2-2\g_t\left(f(e^t)-f^*-\langle\nabla f(e^t),e^t-x^t\rangle\right)+\g_t^2\|\nabla f(e^t)\|^2,
    \end{align*}
    With the choice of step size \eqref{eq:usam-ps}, the last expression becomes
    \begin{align*}
        \|x^{t+1}-x^*\|^2\leq\|x^t-x^*\|^2-\frac{\left(f(e^t)-f^*-\langle\nabla f(e^t),e^t-x^t\rangle\right)^2}{\|\nabla f(e^t)\|^2}.
    \end{align*}
    We now bound the quantity $f(e^t)-f^*-\langle\nabla f(e^t),e^t-x^t\rangle = f(e^t)-f^*-\r_t\langle\nabla f(e^t),\nabla f(x^t)\rangle$ from below in terms of function values. From \eqref{eq:lem-main-3}, $\r_t\langle\nabla f(e^t),\nabla f(x^t)\rangle \leq f(e^t)-f(x^t)+\frac{L\r_t^2}{2}\|\nabla f(x^t)\|^2$, so
    \begin{align*}
        f(e^t)-f^*-\langle\nabla f(e^t),e^t-x^t\rangle \geq f(x^t) - f^* - \frac{L\r_t^2}{2}\|\nabla f(x^t)\|^2.
    \end{align*}
    By smoothness \eqref{eq:smooth-qg}, $\|\nabla f(x^t)\|^2 \leq 2L(f(x^t)-f^*)$, hence
    \begin{align}
        \label{eq:Nt:fval}
        f(e^t)-f^*-\langle\nabla f(e^t),e^t-x^t\rangle \geq (1-L^2\r_t^2)(f(x^t)-f^*) = (1-L\r_t)(1+L\r_t)(f(x^t)-f^*).
    \end{align}
    Next, we bound $\|\nabla f(e^t)\|^2$ from above. By \eqref{eq:lem-main-4}, $\|\nabla f(e^t)\|^2 \leq (1+L\r_t)^2\|\nabla f(x^t)\|^2 \leq 2L(1+L\r_t)^2(f(x^t)-f^*)$,
    where the last step uses \eqref{eq:smooth-qg}. Combining these bounds:
    \begin{align*}
        \frac{\left(f(e^t)-f^*-\langle\nabla f(e^t),e^t-x^t\rangle\right)^2}{\|\nabla f(e^t)\|^2}
        &\geq \frac{(1-L\r_t)^2(1+L\r_t)^2(f(x^t)-f^*)^2}{2L(1+L\r_t)^2(f(x^t)-f^*)}
        = \frac{(1-L\r_t)^2}{2L}\bigl(f(x^t)-f^*\bigr).
    \end{align*}
    Hence, we obtain the desired inequality
    \begin{align*}
        \|x^{t+1}-x^*\|^2 \leq \|x^t-x^*\|^2 - \frac{(1-L\r_t)^2}{2L}\bigl(f(x^t)-f^*\bigr).
    \end{align*}
\end{proof}

\newpage
\section{Proofs for Section \ref{sec:convergence}: Convergence Guarantees}
\label{sec:proofs-convergence}
\subsection{Deterministic}
We begin with the deterministic (full-batch) setting and prove the convergence guarantees stated in \Cref{sec:convergence}. Throughout this subsection we consider the USAM \eqref{eq:usam-1}--\eqref{eq:usam-2} equipped with the Polyak step size \eqref{eq:usam-ps}, and we assume a constant sharpness radius $\r_t=\r$.

\begin{theorem}[Strongly convex case]
    Let $f$ be $\m$-strongly convex and $L$-smooth. Suppose that $\r_t=\r\leq\frac{1}{L}$. Then the iterates generated by the deterministic USAM \eqref{eq:usam-1}--\eqref{eq:usam-2} with step size \eqref{eq:usam-ps} satisfy, for all $t\geq0$,
    \begin{align*}
        \|x^t-x^*\|^2\leq\left(1-\frac{\m(1-L\r)^2}{4L}\right)^t\|x^0-x^*\|^2.
    \end{align*}
\end{theorem}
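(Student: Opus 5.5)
The plan is to combine the descent property of \Cref{prop:det-descent} with the quadratic growth bound coming from strong convexity. Since $f$ is $\m$-strongly convex, it is in particular convex and $L$-smooth. With $\r_t=\r\leq 1/L$, \Cref{prop:det-descent} applies at every iteration and gives
\begin{align*}
    \|x^{t+1}-x^*\|^2\leq\|x^t-x^*\|^2-\frac{(1-L\r)^2}{2L}\left(f(x^t)-f^*\right).
\end{align*}
The only remaining task is to lower bound $f(x^t)-f^*$ by a multiple of $\|x^t-x^*\|^2$. For this I will use the quadratic growth inequality \eqref{eq:st-convex-qg} from \Cref{lem:st-convexity-prop}, namely $f(x^t)-f^*\geq\frac{\m}{2}\|x^t-x^*\|^2$. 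Substituting it gives
\begin{align*}
    \|x^{t+1}-x^*\|^2\leq\left(1-\frac{\m(1-L\r)^2}{4L}\right)\|x^t-x^*\|^2.
\end{align*}
Unrolling this recursion from $0$ to $t$ yields the stated bound.

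Two small bookkeeping points remain. First, if at some iteration $\nabla f(x^t)=0$, then $x^t=x^*$ by strong convexity, so the step size is undefined. We treat this as termination, or equivalently note that the iterate stays at $x^*$; in either case the bound holds trivially for all later $t$. Second, the contraction factor should lie in $[0,1)$ so that unrolling is legitimate and the bound is meaningful. This follows from $\m\leq L$ and $(1-L\r)^2\leq 1$, which give $\frac{\m(1-L\r)^2}{4L}\leq\frac14$.

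There is no real obstacle here, since all the work is done by \Cref{prop:det-descent}. The only delicate point is to use the correct direction of \eqref{eq:st-convex-qg}: we need the lower bound of $f-f^*$ in terms of distance, not the PL bound.
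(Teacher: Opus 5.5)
Your proposal is correct and follows the paper's proof exactly: apply the descent inequality of \Cref{prop:det-descent}, lower bound $f(x^t)-f^*$ by $\frac{\m}{2}\|x^t-x^*\|^2$ via \eqref{eq:st-convex-qg}, and unroll the resulting contraction. One small nitpick: at $\r=1/L$ the contraction factor equals $1$, so it does not lie in $[0,1)$. Unrolling only requires the factor to be nonnegative, and your bound $\frac{\m(1-L\r)^2}{4L}\leq\frac14$ already guarantees that.
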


\begin{proof}[Proof of \Cref{thm:det-st-convex}]
    Starting from \eqref{eq:det-descent}, we have for each $t\geq0$,
    \begin{align*}
        \|x^{t+1}-x^*\|^2\leq\|x^t-x^*\|^2-\frac{(1-L\r)^2}{2L}\,\left(f(x^t)-f^*\right).
    \end{align*}
    Since $f$ is $\m$-strongly convex, the quadratic growth inequality \eqref{eq:st-convex-qg} implies
    \begin{align*}
        f(x^t)-f^*\ge \frac{\m}{2}\|x^t-x^*\|^2.
    \end{align*}
    Substituting this into the descent inequality yields the contraction
    \begin{align*}
        \|x^{t+1}-x^*\|^2
        &\leq\|x^t-x^*\|^2-\frac{(1-L\r)^2}{2L}\cdot\frac{\m}{2}\,\|x^t-x^*\|^2\\
        &=\left(1-\frac{\m(1-L\r)^2}{4L}\right)\|x^t-x^*\|^2.
    \end{align*}
    Unrolling the recurrence gives the stated linear convergence rate.
\end{proof}

\begin{theorem}[Convex case]
    Let $f$ be convex and $L$-smooth. Suppose that $\r_t=\r\leq\frac{1}{L}$. Then the iterates generated by the deterministic USAM \eqref{eq:usam-1}--\eqref{eq:usam-2} with step size \eqref{eq:usam-ps} satisfy, for all $T\geq1$,
    \begin{align*}
        f(\overline{x}^T)-f^*\leq\frac{2L\|x^0-x^*\|^2}{T(1-L\r)^2},
    \end{align*}
    where $\overline{x}^T=\frac{1}{T}\sum_{t=0}^{T-1}x^t$ is the Cesaro average.
\end{theorem}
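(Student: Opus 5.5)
The plan is to telescope the descent inequality of \Cref{prop:det-descent} and then apply Jensen's inequality to the Cesaro average. With a constant radius $\r_t=\r\leq 1/L$, \eqref{eq:det-descent} gives, for every $t\geq0$,
\begin{align*}
    \frac{(1-L\r)^2}{2L}\bigl(f(x^t)-f^*\bigr)\leq\|x^t-x^*\|^2-\|x^{t+1}-x^*\|^2 .
\end{align*}

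Summing this inequality over $t=0,\dots,T-1$, the right-hand side telescopes to $\|x^0-x^*\|^2-\|x^T-x^*\|^2\leq\|x^0-x^*\|^2$. This yields
\begin{align*}
    \frac{1}{T}\sum_{t=0}^{T-1}\bigl(f(x^t)-f^*\bigr)\leq\frac{2L\|x^0-x^*\|^2}{T(1-L\r)^2}.
\end{align*}
Convexity of $f$ then gives $f(\overline{x}^T)\leq\frac{1}{T}\sum_{t=0}^{T-1}f(x^t)$ by Jensen's inequality, and this is exactly the claimed bound.

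Two edge cases need a brief remark. First, if some $\nabla f(x^t)=0$, the iterate is optimal and the algorithm stops. Setting all subsequent iterates equal to $x^t$ keeps every remaining term $f(x^s)-f^*$ equal to zero, so the bound still holds. Second, if $\r=1/L$, the right-hand side is $+\infty$ and the statement is vacuous, so one may implicitly assume $\r<1/L$.

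There is no real obstacle here, since all the work is already contained in \Cref{prop:det-descent}. The only point to verify is that the proposition applies at every iteration, which requires $\r_t\leq 1/L$ uniformly; this holds because the radius is constant.
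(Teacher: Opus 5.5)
Your proposal is correct and follows the paper's proof: you telescope the descent inequality \eqref{eq:det-descent} with constant $\r$ over $t=0,\dots,T-1$, drop $\|x^T-x^*\|^2$, divide by $T$, and apply Jensen's inequality to the Cesaro average. Your edge-case remarks (termination at a stationary point, and $\r=1/L$ making the bound vacuous) are sound, and your final display keeps the factor $1/T$, which the paper's written proof omits by typo in its last line.
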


\begin{proof}[Proof of \Cref{thm:det-convex}]
    By the descent inequality \eqref{eq:det-descent}, for each $t\geq0$ we have
    \begin{align*}
        \|x^{t+1}-x^*\|^2\leq\|x^t-x^*\|^2-\frac{(1-L\r)^2}{2L}\,\left(f(x^t)-f^*\right).
    \end{align*}
    Summing this inequality over $t=0,1,\dots,T-1$ yields the telescoping bound
    \begin{align*}
        \frac{(1-L\r)^2}{2L}\sum_{t=0}^{T-1}\left(f(x^t)-f^*\right)\leq\|x^0-x^*\|^2-\|x^T-x^*\|^2\leq\|x^0-x^*\|^2.
    \end{align*}
    Dividing both sides by $T$ and using Jensen's inequality gives
    \begin{align*}
        f(\overline{x}^T)-f^*
        &\leq\frac{1}{T}\sum_{t=0}^{T-1}\left(f(x^t)-f^*\right)\\
        &\leq\frac{2L\|x^0-x^*\|^2}{(1-L\r)^2},
    \end{align*}
    which is the desired result.
\end{proof}

We finally consider the deterministic setting with a decreasing sharpness radius. The main point is that once $\r_t$ becomes sufficiently small (which holds eventually by assumption), the descent inequality \eqref{eq:det-descent} yields summability of the squared gradient norms.

\begin{theorem}[Decreasing radius implies vanishing gradients]
    Let $f$ be convex and $L$-smooth. Consider the deterministic USAM iterates \eqref{eq:usam-1}--\eqref{eq:usam-2} with step size \eqref{eq:usam-ps}, where the radii $(\r_t)_{t\geq0}$ satisfy $\r_t\downarrow0$, meaning that $\r_t\ge 0$, $\r_{t+1}\leq\r_t$ for all $t$, and $\r_t\to 0$. Then
    \begin{align*}
        \sum_{t=0}^{\infty}\left(f(x^t)-f^*\right)<\infty,
    \end{align*}
    and consequently $f(x^t)\to f^*$ as $t\to\infty$.
\end{theorem}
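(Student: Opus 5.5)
The plan is to reduce everything to the descent inequality of \Cref{prop:det-descent}, which holds at every iteration $t$ with $\r_t\leq 1/L$. Since $\r_t\to0$ and the sequence is non-increasing, there is an index $t_0$ such that $\r_t\leq \frac{1}{2L}$ for all $t\geq t_0$; on this tail we have $(1-L\r_t)^2\geq \frac14$. Note that the paper's \ref{eq:usam-ps} is only shown to be well-defined (non-negative, ReLU-free) when $\r_t\le 1/L$. For the finitely many early iterations with $\r_t>1/L$, I would interpret the step size with the ReLU safeguard. The resulting step is still a minimizer of the quadratic upper bound over $\g_t\geq0$, so $\|x^{t+1}-x^*\|^2\leq\|x^t-x^*\|^2$ holds there as well. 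This is because the bound is at most its value at $\g_t=0$, and the convexity inequality used in the derivation holds regardless of $\r_t$.

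Concretely, for all $t\geq t_0$, \Cref{prop:det-descent} gives
\begin{align*}
\|x^{t+1}-x^*\|^2\leq\|x^t-x^*\|^2-\frac{1}{8L}\left(f(x^t)-f^*\right).
\end{align*}
Summing from $t=t_0$ to $T-1$ and telescoping yields
\begin{align*}
\sum_{t=t_0}^{T-1}\left(f(x^t)-f^*\right)\leq 8L\|x^{t_0}-x^*\|^2.
\end{align*}
The right-hand side is independent of $T$, and it is finite: $\|x^{t_0}-x^*\|\leq\|x^0-x^*\|$ by the monotonicity just argued, or simply because $x^{t_0}$ is a finite vector. The finitely many terms with $t<t_0$ are each finite, since $f$ is real-valued. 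Hence $\sum_{t\geq0}(f(x^t)-f^*)<\infty$.

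Because each summand is non-negative and the series converges, its terms tend to zero, so $f(x^t)\to f^*$. If one also wants $\|\nabla f(x^t)\|\to0$, it follows from \eqref{eq:smooth-qg}, since $\|\nabla f(x^t)\|^2\leq 2L(f(x^t)-f^*)$. If some $\nabla f(x^t)=0$, the algorithm stops at a minimizer and the claim is trivial.

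The only delicate point is the initial phase where $\r_t$ may exceed $1/L$, which \Cref{prop:det-ss} does not cover. The plan above handles it by noting that finiteness of finitely many terms suffices and that no descent is actually needed there. I expect no further obstacle.
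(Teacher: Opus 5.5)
Your proposal is correct and follows the paper's proof essentially step for step. You choose $t_0$ with $\r_t\leq\frac{1}{2L}$ for all $t\geq t_0$, apply \Cref{prop:det-descent} with the uniform constant $\frac{1}{8L}$, telescope to bound the tail sum by $8L\|x^{t_0}-x^*\|^2$, and absorb the finitely many initial terms; your ReLU detour for the early iterations is harmless but unnecessary, since only finiteness of $\|x^{t_0}-x^*\|$ is used, and the unmodified step is already well defined there because \eqref{eq:lem-convex-1} gives $\nabla f(e^t)\neq0$ whenever $\nabla f(x^t)\neq0$.
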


\begin{proof}[Proof of \Cref{thm:dec-rho}]
    Since $\r_t\downarrow0$, there exists an index $t_0\in\N$ such that $\r_{t_0}<\frac{1}{2L}$. By monotonicity of $(\r_t)$, we then have $\r_t\leq\frac{1}{2L}$ for all $t\geq t_0$, which implies $1-L\r_t\geq\frac{1}{2}$ and hence
    \begin{align*}
        \frac{(1-L\r_t)^2}{2L}\ge \frac{1}{8L}\qquad\forall\,t\geq t_0.
    \end{align*}
    For all $t\geq t_0$, we may therefore apply \eqref{eq:det-descent} to obtain
    \begin{align*}
        \|x^{t+1}-x^*\|^2
        &\leq\|x^t-x^*\|^2-\frac{(1-L\r_t)^2}{2L}\left(f(x^t)-f^*\right)\\
        &\leq\|x^t-x^*\|^2-\frac{1}{8L}\left(f(x^t)-f^*\right).
    \end{align*}
    Summing (telescoping) from $t=t_0$ to $T-1$ for any $T\geq t_0+1$ yields
    \begin{align*}
        \frac{1}{8L}\sum_{t=t_0}^{T-1}\left(f(x^t)-f^*\right)\leq\|x^{t_0}-x^*\|^2-\|x^T-x^*\|^2\leq\|x^{t_0}-x^*\|^2.
    \end{align*}
    Letting $T\to\infty$ gives $\sum_{t=t_0}^{\infty}\left(f(x^t)-f^*\right)<\infty$. Since the prefix sum $\sum_{t=0}^{t_0-1}\|\nabla f(x^t)\|^2$ is finite, we conclude
    \begin{align*}
        \sum_{t=0}^{\infty}\left(f(x^t)-f^*\right)<\infty.
    \end{align*}
    Finally, because the summands are non-negative, summability implies $\left(f(x^t)-f^*\right)\to0$, i.e., $f(x^t)\to f^*$.
\end{proof}

\subsection{Stochastic}
In this subsection, we focus on the proofs of the convergence guarantees in the stochastic setting. Recall that each component function $f_i$ is convex and $L_i$-smooth. We let $L_{\max}= \max_{i\in[n]}L_i$, so each $f_i$ is also $L_{\max}$-smooth. We analyze \ref{eq:usam} equipped with the capped \ref{eq:usam-sps}. Throughout, we assume a constant radius $\r_t=\r$ and $\r\leq1/L_{\max}$.

We begin by establishing a key descent inequality that underlies the proofs of both \Cref{thm:stoch-st-convex,thm:stoch-convex}.

\begin{proposition}[Stochastic descent inequality]
    Let each $f_i$ be convex and $L_i$-smooth, and set $L_{\max}=\max_{i\in[n]}L_i$. Suppose that $\r_t=\r\leq\frac{1}{L_{\max}}$. Let $\a=(1-L_{\max}\r)^2\min\left\{\frac{1}{2L_{\max}},\g_b\right\}$. Then the iterates of \ref{eq:usam} with \ref{eq:usam-sps} satisfy, for all $t\geq0$,
    \begin{align}
        \label{eq:stoch-descent-final}
        \E\|x^{t+1}-x^*\|^2-\E\|x^t-x^*\|^2\leq-\a\,\E[f(x^t)-f^*]+(2\g_b-\a)\s^2.
    \end{align}
\end{proposition}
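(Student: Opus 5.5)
The plan is to follow the classical SPS$_{\max}$ analysis of \citet{loizou2021stochastic}, run at the level of the mini-batch function $f_{S_t}$. The deterministic bounds of \Cref{lem:main} are applied to $f_{S_t}$, which is convex and $L_{S_t}$-smooth with $L_{S_t}\le L_{\max}$. Write $g_e=\nabla f_{S_t}(e^t)$, $g_x=\nabla f_{S_t}(x^t)$, and
\[
P_t:=f_{S_t}(e^t)-\ell_{S_t}^*-\r\langle g_e,g_x\rangle ,
\]
so that $\g_t=\min\{P_t/\|g_e\|^2,\g_b\}$.

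\textbf{Step 1: one-step expansion.} Exactly as in the proof of \Cref{prop:det-descent}, I expand $\|x^{t+1}-x^*\|^2$ and use convexity of $f_{S_t}$ at $e^t$. This gives
\[
\|x^{t+1}-x^*\|^2\le\|x^t-x^*\|^2-2\g_t\bigl(f_{S_t}(e^t)-f_{S_t}(x^*)-\r\langle g_e,g_x\rangle\bigr)+\g_t^2\|g_e\|^2 .
\]
The bracket equals $P_t-(f_{S_t}(x^*)-\ell_{S_t}^*)$. Since $\g_t\le P_t/\|g_e\|^2$, we have $\g_t^2\|g_e\|^2\le \g_tP_t$. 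Hence the right-hand side is at most
\[
\|x^t-x^*\|^2-\g_tP_t+2\g_t\bigl(f_{S_t}(x^*)-\ell_{S_t}^*\bigr).
\]
Because $f_{S_t}(x^*)-\ell_{S_t}^*\ge0$ and $\g_t\le\g_b$, the last term is at most $2\g_b(f_{S_t}(x^*)-\ell_{S_t}^*)$.

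\textbf{Step 2: lower bound $\g_tP_t\ge \a\,(f_{S_t}(x^t)-\ell_{S_t}^*)$.} This is the main step. Repeating \eqref{eq:Nt:fval} for $f_{S_t}$ with $\ell_{S_t}^*$ in place of $f^*$ uses \eqref{eq:lem-main-3} together with $\|g_x\|^2\le 2L_{S_t}(f_{S_t}(x^t)-f_{S_t}^*)\le 2L_{S_t}(f_{S_t}(x^t)-\ell_{S_t}^*)$. It gives
\[
P_t\ge(1-L_{S_t}^2\r^2)(f_{S_t}(x^t)-\ell_{S_t}^*)\ge0,
\]
which in particular confirms nonnegativity even with a loose lower bound $\ell_{S_t}^*$. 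Similarly, \eqref{eq:lem-main-4} yields $\|g_e\|^2\le 2L_{S_t}(1+L_{S_t}\r)^2(f_{S_t}(x^t)-\ell_{S_t}^*)$. I then split into two cases.
\begin{itemize}
\item If $\g_t=P_t/\|g_e\|^2$, then $\g_tP_t=P_t^2/\|g_e\|^2\ge\frac{(1-L_{S_t}\r)^2}{2L_{S_t}}(f_{S_t}(x^t)-\ell_{S_t}^*)$, exactly as in \Cref{prop:det-descent}.
\item If $\g_t=\g_b$, then $\g_tP_t\ge\g_b(1-L_{S_t}\r)(1+L_{S_t}\r)(f_{S_t}(x^t)-\ell_{S_t}^*)\ge\g_b(1-L_{S_t}\r)^2(\cdot)$.
\end{itemize}
In both cases, monotonicity in $L_{S_t}\le L_{\max}$ gives $\g_tP_t\ge\a(f_{S_t}(x^t)-\ell_{S_t}^*)$. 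The degenerate case $g_x=0$ also needs a check: there $e^t=x^t$, and either $g_e=0$ (no move) or the bound holds trivially.

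\textbf{Step 3: recombine and take expectation.} I write
\[
-\a(f_{S_t}(x^t)-\ell_{S_t}^*)=-\a(f_{S_t}(x^t)-f_{S_t}(x^*))-\a(f_{S_t}(x^*)-\ell_{S_t}^*).
\]
Combined with Step 1, this gives
\[
\|x^{t+1}-x^*\|^2\le\|x^t-x^*\|^2-\a\bigl(f_{S_t}(x^t)-f_{S_t}(x^*)\bigr)+(2\g_b-\a)\bigl(f_{S_t}(x^*)-\ell_{S_t}^*\bigr).
\]
Since $S_t$ is independent of $x^t$, conditioning on $x^t$ gives $\E[f_{S_t}(x^t)-f_{S_t}(x^*)\mid x^t]=f(x^t)-f^*$ and $\E[f_{S_t}(x^*)-\ell_{S_t}^*]=\s^2$. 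Taking total expectation then gives \eqref{eq:stoch-descent-final}. The expected obstacle is Step 2, which requires handling the min via the case split while keeping constants uniform over mini-batches; that is why $\a$ takes its stated form.
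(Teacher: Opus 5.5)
Your proposal is correct and follows essentially the same route as the paper: expand the distance using convexity of $f_{S_t}$ at $e^t$, lower-bound the Polyak numerator by $(1-L_{S_t}^2\rho^2)(f_{S_t}(x^t)-\ell_{S_t}^*)$, split on whether the cap $\gamma_b$ is active, and take conditional expectations. Your inequality $\gamma_t^2\|\nabla f_{S_t}(e^t)\|^2\le\gamma_t P_t$ simply merges the paper's two cases into the single term $-\gamma_tP_t$. One point in your favor is the uncapped case: you keep the factor $(1-L_{S_t}\rho)(1+L_{S_t}\rho)$ when squaring the numerator, which is exactly what yields $\frac{(1-L_{S_t}\rho)^2}{2L_{S_t}}$. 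The paper's displayed chain instead first weakens this factor to $(1-L_{\max}\rho)^2$ and then invokes $\frac{(1-L_{\max}\rho)^2}{(1+L_{\max}\rho)^2}\le 1$ in the wrong direction.
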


\begin{proof}
    Fix $t\geq0$. Using the update $x^{t+1}=x^t-\g_t\nabla f_{S_t}(e^t)$ and expanding the squared distance gives
    \begin{align}
        \label{eq:stoch-expand}
        \|x^{t+1}-x^*\|^2-\|x^t-x^*\|^2
        &=-2\g_t\langle\nabla f_{S_t}(e^t),x^t-x^*\rangle+\g_t^2\|\nabla f_{S_t}(e^t)\|^2\nonumber\\
        &=-2\g_t\left(\langle\nabla f_{S_t}(e^t),e^t-x^*\rangle-\langle\nabla f_{S_t}(e^t),e^t-x^t\rangle\right)+\g_t^2\|\nabla f_{S_t}(e^t)\|^2\nonumber\\
        \overset{\eqref{eq:convex}}&{\leq}-2\g_t\left(f_{S_t}(e^t)-f_{S_t}(x^*)-\langle\nabla f_{S_t}(e^t),e^t-x^t\rangle\right)+\g_t^2\|\nabla f_{S_t}(e^t)\|^2.
    \end{align}
    Introduce the shorthand
    \begin{align*}
        A_t=f_{S_t}(e^t)-\ell_{S_t}^*-\langle\nabla f_{S_t}(e^t),e^t-x^t\rangle=f_{S_t}(e^t)-\ell_{S_t}^*-\r\langle\nabla f_{S_t}(e^t),\nabla f_{S_t}(x^t)\rangle\,
    \end{align*}
    so that
    \begin{align*}
        f_{S_t}(e^t)-f_{S_t}(x^*)-\langle\nabla f_{S_t}(e^t),e^t-x^t\rangle=A_t + \ell_{S_t}^* - f_{S_t}(x^*).
    \end{align*}
    Substituting this identity into \eqref{eq:stoch-expand} yields
    \begin{align}
        \label{eq:stoch-pre}
        \|x^{t+1}-x^*\|^2-\|x^t-x^*\|^2\leq-2\g_tA_t+\g_t^2\|\nabla f_{S_t}(e^t)\|^2+2\g_t\left(f_{S_t}(x^*)-\ell_{S_t}^*\right).
    \end{align}
    Since each $f_i$ is $L_{\max}$-smooth, so is $f_{S_t}$, and because $\r\leq1/L_{\max}$, applying the improved lower bound (as in the proof of \Cref{prop:det-descent}) to the mini-batch function $f_{S_t}$ gives
    \begin{align}
        \label{eq:A_bound}
        A_t\geq(1-L_{\max}^2\r^2)(f_{S_t}(x^t)-\ell_{S_t}^*)\geq(1-L_{\max}\r)^2(f_{S_t}(x^t)-\ell_{S_t}^*),
    \end{align}
    where the last step uses $(1-L_{\max}^2\r^2)=(1-L_{\max}\r)(1+L_{\max}\r)\geq(1-L_{\max}\r)^2$. We now bound the right-hand side of \eqref{eq:stoch-pre} using the two cases in the definition of $\g_t$.
    
    \emph{Case 1:} $\g_t = \frac{A_t}{\|\nabla f_{S_t}(e^t)\|^2}\le \g_b$. Then the first two terms in \eqref{eq:stoch-pre} combine as
    \begin{align*}
        -2\g_t A_t + \g_t^2\|\nabla f_{S_t}(e^t)\|^2=-\frac{A_t^2}{\|\nabla f_{S_t}(e^t)\|^2},
    \end{align*}
    and hence
    \begin{align}
        \label{eq:stoch-case1}
        \|x^{t+1}-x^*\|^2-\|x^t-x^*\|^2
        &\leq-\frac{A_t^2}{\|\nabla f_{S_t}(e^t)\|^2}+2\g_t\left(f_{S_t}(x^*)-\ell_{S_t}^*\right)\nonumber\\
        &\leq-\frac{A_t^2}{\|\nabla f_{S_t}(e^t)\|^2}+2\g_b\left(f_{S_t}(x^*)-\ell_{S_t}^*\right).
    \end{align}
    Moreover, by \eqref{eq:lem-main-4} and \eqref{eq:smooth-qg} applied to $f_{S_t}$, we have $\|\nabla f_{S_t}(e^t)\|^2\leq 2L_{\max}(1+L_{\max}\r)^2(f_{S_t}(x^t)-\ell_{S_t}^*)$. Therefore
    \begin{align}
        \label{eq:stoch-case1-final}
        \frac{A_t^2}{\|\nabla f_{S_t}(e^t)\|^2}
        &\geq\frac{(1-L_{\max}\r)^4(f_{S_t}(x^t)-\ell_{S_t}^*)^2}{2L_{\max}(1+L_{\max}\r)^2(f_{S_t}(x^t)-\ell_{S_t}^*)}\nonumber\\
        &=\frac{(1-L_{\max}\r)^4}{2L_{\max}(1+L_{\max}\r)^2}\,(f_{S_t}(x^t)-\ell_{S_t}^*)\nonumber\\
        &\geq\frac{(1-L_{\max}\r)^2}{2L_{\max}}\,(f_{S_t}(x^t)-\ell_{S_t}^*),
    \end{align}
    where the last inequality uses $\frac{(1-L_{\max}\r)^2}{(1+L_{\max}\r)^2}\leq 1$. Combining \eqref{eq:stoch-case1} and \eqref{eq:stoch-case1-final} yields
    \begin{align}
        \label{eq:stoch-case1-bound}
        \|x^{t+1}-x^*\|^2-\|x^t-x^*\|^2\leq-\frac{(1-L_{\max}\r)^2}{2L_{\max}}(f_{S_t}(x^t)-\ell_{S_t}^*)+2\g_b\left(f_{S_t}(x^*)-\ell_{S_t}^*\right).
    \end{align}

    \emph{Case 2:} $\g_t=\g_b$, which means $\frac{A_t}{\|\nabla f_{S_t}(e^t)\|^2}\geq\g_b$, i.e., $A_t\geq\g_b\|\nabla f_{S_t}(e^t)\|^2$. Plugging $\g_t=\g_b$ into \eqref{eq:stoch-pre} gives
    \begin{align*}
        \|x^{t+1}-x^*\|^2-\|x^t-x^*\|^2
        &\leq-2\g_bA_t+\g_b^2\|\nabla f_{S_t}(e^t)\|^2+2\g_b\left(f_{S_t}(x^*)-\ell_{S_t}^*\right)\\
        &\leq-\g_bA_t+2\g_b\left(f_{S_t}(x^*)-\ell_{S_t}^*\right),
    \end{align*}
    where we used $A_t\geq\g_b\|\nabla f_{S_t}(e^t)\|^2$ in the last step. Using \eqref{eq:A_bound}, we obtain
    \begin{align}
        \label{eq:stoch-case2-bound}
        \|x^{t+1}-x^*\|^2-\|x^t-x^*\|^2\leq-\g_b(1-L_{\max}\r)^2(f_{S_t}(x^t)-\ell_{S_t}^*)+2\g_b\left(f_{S_t}(x^*)-\ell_{S_t}^*\right).
    \end{align}
    Combining \eqref{eq:stoch-case1-bound} and \eqref{eq:stoch-case2-bound}, and recalling that $\a=(1-L_{\max}\r)^2\min\{\frac{1}{2L_{\max}},\g_b\}$, we obtain in all cases
    \begin{align}
        \label{eq:stoch-descent}
        \|x^{t+1}-x^*\|^2-\|x^t-x^*\|^2\leq-\a\,(f_{S_t}(x^t)-\ell_{S_t}^*)+2\g_b\left(f_{S_t}(x^*)-\ell_{S_t}^*\right).
    \end{align}
    
    Now take conditional expectation on $x^t$. Note that
    \begin{align*}
        \E_t[f_{S_t}(x^t)-\ell_{S_t}^*]&=f(x^t)-\E[\ell_{S_t}^*]=(f(x^t)-f^*)+\s^2,\\
        \E_t[f_{S_t}(x^*)-\ell_{S_t}^*]&=\s^2.
    \end{align*}
    Taking conditional expectations in \eqref{eq:stoch-descent} and substituting:
    \begin{align}
        \label{eq:stoch-descent-2}
        \E_t\|x^{t+1}-x^*\|^2-\|x^t-x^*\|^2
        &\leq-\a\,(f(x^t)-f^*) + (2\g_b-\a)\,\s^2.
    \end{align}
    Taking full expectations in \eqref{eq:stoch-descent-2} and using the tower property yields \eqref{eq:stoch-descent-final}.
\end{proof}

We now use \eqref{eq:stoch-descent-final} to prove the two main stochastic convergence guarantees.

\begin{theorem}[Strongly convex case]
    Let each $f_i$ be convex and $L_i$-smooth, and set $L_{\max}=\max_{i\in[n]}L_i$. Suppose that $f$ is $\m$-strongly convex and that $\r_t=\r\leq\frac{1}{L_{\max}}$. Let $\a=(1-L_{\max}\r)^2\min\left\{\frac{1}{2L_{\max}},\g_b\right\}$. Then the iterates of \ref{eq:usam} with \ref{eq:usam-sps} satisfy, for all $t\geq0$,
    \begin{align*}
        \E\|x^t-x^*\|^2\leq\left(1-\frac{\m\a}{2}\right)^t\|x^0-x^*\|^2+\frac{2(2\g_b-\a)}{\m\a}\,\s^2,
    \end{align*}
    where $\s^2=\E\left[f_{S_t}(x^*)-\ell_{S_t}^*\right]$.
\end{theorem}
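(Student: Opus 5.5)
The plan is to start from the stochastic descent inequality \eqref{eq:stoch-descent-final} just established, which gives for every $t\geq0$
\begin{align*}
    \E\|x^{t+1}-x^*\|^2\leq\E\|x^t-x^*\|^2-\a\,\E[f(x^t)-f^*]+(2\g_b-\a)\s^2.
\end{align*}
Since $f$ is $\m$-strongly convex, the quadratic growth bound \eqref{eq:st-convex-qg} gives $f(x^t)-f^*\geq\frac{\m}{2}\|x^t-x^*\|^2$ pointwise. Taking expectations and substituting it into the descent inequality yields the one-step contraction
\begin{align*}
    \E\|x^{t+1}-x^*\|^2\leq\left(1-\frac{\m\a}{2}\right)\E\|x^t-x^*\|^2+(2\g_b-\a)\s^2.
\end{align*}

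Next I would check that the contraction factor lies in $[0,1)$, so the unrolling is legitimate. Since $\a\leq\frac{1}{2L_{\max}}$ and $\m\leq L\leq L_{\max}$, we get $\frac{\m\a}{2}\leq\frac{1}{4}$. I also need $2\g_b-\a\geq0$, which follows from $\a\leq(1-L_{\max}\r)^2\g_b\leq\g_b$. This shows that the additive term is non-negative.

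Unrolling the recursion from $0$ to $t$ gives
\begin{align*}
    \E\|x^t-x^*\|^2\leq\left(1-\frac{\m\a}{2}\right)^t\|x^0-x^*\|^2+(2\g_b-\a)\s^2\sum_{k=0}^{t-1}\left(1-\frac{\m\a}{2}\right)^k.
\end{align*}
I would then bound the geometric sum by $\frac{2}{\m\a}$, which produces exactly the neighborhood term $\frac{2(2\g_b-\a)}{\m\a}\s^2$.

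There is no substantive obstacle here, because all the work is already in the descent proposition. The only subtle point is the bookkeeping that $\m\leq L_{\max}$. Strong convexity of $f$ together with $L_{\max}$-smoothness of $f$, as an average of $L_{\max}$-smooth functions, gives this. It ensures the rate $1-\m\a/2$ is a valid contraction.
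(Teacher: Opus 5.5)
Your proposal is correct and follows the paper's proof essentially verbatim. You insert the quadratic-growth bound $f(x^t)-f^*\geq\frac{\m}{2}\|x^t-x^*\|^2$ into the stochastic descent inequality \eqref{eq:stoch-descent-final}, unroll the linear recursion, and bound the geometric sum by $\frac{2}{\m\a}$. Your extra checks are valid details the paper leaves implicit: $\m\leq L_{\max}$, so that $1-\frac{\m\a}{2}\in[\frac{3}{4},1]$, and $2\g_b-\a\geq\g_b>0$ together with $\s^2\geq0$. One small point: strict contraction needs $\a>0$, i.e.\ $\r<1/L_{\max}$, which is also what makes the stated neighborhood term finite.
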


\begin{proof}[Proof of \Cref{thm:stoch-st-convex}]
    Starting from \eqref{eq:stoch-descent-final}, we have for all $t\geq0$,
    \begin{align*}
        \E\|x^{t+1}-x^*\|^2\leq\E\|x^t-x^*\|^2-\a\,\E[f(x^t)-f^*]+(2\g_b-\a)\s^2.
    \end{align*}
    Since $f$ is $\m$-strongly convex, the quadratic growth inequality \eqref{eq:st-convex-qg} implies
    $f(x^t)-f^*\geq\frac{\m}{2}\|x^t-x^*\|^2$, and therefore
    \begin{align*}
        \E\|x^{t+1}-x^*\|^2\leq\left(1-\frac{\m\a}{2}\right)\E\|x^t-x^*\|^2+(2\g_b-\a)\s^2.
    \end{align*}
    Unrolling this linear recursion yields
    \begin{align*}
        \E\|x^t-x^*\|^2\leq\left(1-\frac{\m\a}{2}\right)^t\|x^0-x^*\|^2+\frac{2(2\g_b-\a)}{\m\a}\,\s^2,
    \end{align*}
    which is the desired result.
\end{proof}

\begin{theorem}[Convex case]
    Let each $f_i$ be convex and $L_i$-smooth, and set $L_{\max}=\max_{i\in[n]}L_i$. Suppose that $\r_t=\r\leq\frac{1}{L_{\max}}$. Let $\a=(1-L_{\max}\r)^2\min\left\{\frac{1}{2L_{\max}},\g_b\right\}$. Then the iterates of \ref{eq:usam} with \ref{eq:usam-sps} satisfy, for all $T\geq1$,
    \begin{align*}
        \frac{1}{T}\sum_{t=0}^{T-1}\E[f(x^t)-f^*]\leq\frac{\|x^0-x^*\|^2}{\a\,T}+\frac{2\g_b-\a}{\a}\,\s^2,
    \end{align*}
    where $\s^2=\E\left[f_{S_t}(x^*)-\ell_{S_t}^*\right]$. 
\end{theorem}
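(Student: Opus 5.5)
The plan is to derive this directly from the stochastic descent inequality \eqref{eq:stoch-descent-final} by telescoping, in the same way the deterministic convex result (\Cref{thm:det-convex}) follows from \eqref{eq:det-descent}. The quantity to bound is the averaged expected suboptimality $\frac{1}{T}\sum_{t=0}^{T-1}\E[f(x^t)-f^*]$, so no Jensen step is needed. If one wants the version in \Cref{thm:stoch-convex} stated for $\E[f(\overline{x}^T)-f^*]$, Jensen's inequality applied to the convex $f$ inside the expectation gives it at the end.

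First, for each $t\ge 0$, rearrange \eqref{eq:stoch-descent-final} as
\begin{align*}
\a\,\E[f(x^t)-f^*]\le \E\|x^t-x^*\|^2-\E\|x^{t+1}-x^*\|^2+(2\g_b-\a)\s^2 .
\end{align*}
Next, sum over $t=0,\dots,T-1$. The distance terms telescope to $\|x^0-x^*\|^2-\E\|x^T-x^*\|^2\le\|x^0-x^*\|^2$, using that $x^0$ is deterministic and dropping the nonpositive term. The noise terms contribute $T(2\g_b-\a)\s^2$. Finally, divide by $\a T$; this is legitimate since $\a>0$ whenever $\r<1/L_{\max}$.

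There is essentially no obstacle, since all the work is in the already-established descent proposition. The only points to check are the following.
\begin{itemize}
\item The coefficient $2\g_b-\a$ is nonnegative, because $\a\le\g_b$. This keeps the neighborhood term meaningful, though it is not needed for the inequality itself.
\item In the boundary case $\r=1/L_{\max}$ we have $\a=0$. The bound is then vacuous, and the statement should be read with $\r<1/L_{\max}$, as in the deterministic theorem.
\item Expectations are finite, and the tower property was already used in deriving \eqref{eq:stoch-descent-final}.
\end{itemize}
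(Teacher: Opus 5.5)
Your proposal is correct and matches the paper's proof: sum the stochastic descent inequality \eqref{eq:stoch-descent-final} over $t=0,\dots,T-1$, telescope, drop $-\E\|x^T-x^*\|^2$, and divide by $\a T$. Your side remarks are also accurate: Jensen is only needed for the $\E[f(\overline{x}^T)-f^*]$ form in the main text, and at $\r=1/L_{\max}$ one has $\a=0$, so the bound is meaningful only for $\r<1/L_{\max}$.
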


\begin{proof}[Proof of \Cref{thm:stoch-convex}]
    Summing \eqref{eq:stoch-descent-final} for $t=0,\dots,T-1$ and telescoping gives
    \begin{align*}
        \a\sum_{t=0}^{T-1}\E[f(x^t)-f^*]\leq\|x^0-x^*\|^2+(2\g_b-\a)\s^2\,T.
    \end{align*}
    Dividing by $\a\,T$ and applying Jensen's inequality concludes the proof.
\end{proof}

\newpage
\section{Further Numerical Experiments}
\label{sec:more-exps}

This section collects additional implementation details and experimental results for the deep-learning benchmarks. In particular, we provide (i) the training protocol and the hyper-parameter ranges used across all CIFAR experiments (\Cref{tab:dl-params}), (ii) pseudocode for the Polyak Scheduler variant of USAM used in our runs
(\Cref{alg:usam-polyak}), and (iii) additional CIFAR-10/100 test accuracies for ResNet-20/32 under different weight-decay settings and sharpness radii (see \Cref{sec:more-exps-results}).

\subsection{Deep-learning protocol and hyper-parameters}
\label{sec:more-exps-setup}

All CIFAR experiments are run for $100$ epochs with batch size $128$. We evaluate top-1 test accuracy and report mean $\pm$ standard deviation over $3$ random seeds. We consider two weight-decay settings, $wd\in\{0,5\cdot 10^{-4}\}$, and a range of sharpness radii $\rho\in\{0.1,0.2,0.3,0.4\}$. For the constant and cosine-annealing schedulers we tune the initial learning rate over $\g\in\{10^{-3},10^{-2},10^{-1}\}$. A summary of these choices is given in \Cref{tab:dl-params}.

\begin{table}[h]
    \centering
    \begin{tabular}{ll}
        \toprule
        Hyper-parameter & Value \\
        \midrule
        Datasets & CIFAR-10/100 \citep{krizhevsky2009learning} \\
        Architectures & ResNet 20/32 \citep{he2016deep} \\
        Hardware & NVIDIA RTX 6000 Ada Generation \\
        Epochs & 100 \\
        Batch-size & 128 \\
        Weight Decay & $0.0, 0.0005$ \\
        Momentum & $0.0$ \\
        Optimizers & \ref{eq:usam}, \ref{eq:nsam} \\
        Sharpness Radius ($\r$) & $0.1, 0.2, 0.3, 0.4$ \\
        Constant / Cosine LR & $\g\in\{10^{-3}, 10^{-2}, 10^{-1}\}$ \\
        Schedulers & Constant, Cosine Annealing, Stochastic Polyak Scheduler \\
        SPS cap & $\g_b\in\{10^{-3}, 10^{-2}, 10^{-1}, 10^0, 10^1, 10^2, 10^3\}$ \\
        Reporting & Mean $\pm$ std over 3 seeds \\
        \bottomrule
    \end{tabular}
    \caption{Experimental details}
    \label{tab:dl-params}
\end{table}

\subsection{Pseudocode}
\label{sec:more-exps-algs}

For completeness, we provide pseudocode for \ref{eq:usam} with \ref{eq:usam-sps} used in our experiments. 

\begin{algorithm}[h]
    \caption{\ref{eq:usam} with \ref{eq:usam-sps}}
    \label{alg:usam-polyak}
    \begin{algorithmic}[1]
        \REQUIRE $x^0 \in \mathbb{R}^d$, iterations $T$, radius $\r$, mini-batch size $\t$, lower bounds $\ell^*_{S_t}$, cap $\g_b>0$
        \FOR{$t=0$ {\bfseries to} $T-1$}
            \STATE Sample a mini-batch $S_t\subseteq[n]$ with $|S_t|=\t$
            \STATE $e^t\leftarrow x^t+\r_t\nabla f_{S_t}(x^t)$
            \STATE $\g_t\leftarrow\min\left\{\frac{f_{S_t}(e^t)-\ell^*_{S_t}-\r_t\langle\nabla f_{S_t}(e^t),e^t-x^t\rangle}{\|\nabla f_{S_t}(e^t)\|^2},\, \g_b\right\}$
            \STATE $x^{t+1}\leftarrow x^t-\g_t\nabla f_{S_t}(e^t)$
        \ENDFOR
    \end{algorithmic}
\end{algorithm}

\subsection{Additional CIFAR results}
\label{sec:more-exps-results}

We now report additional test accuracies for both \ref{eq:usam} and \ref{eq:nsam}. Unless stated otherwise, all entries are mean $\pm$ standard deviation over $3$ seeds, and \textbf{Best} is computed row-wise.

\begin{table}[h]
    \caption{CIFAR-10 test accuracy (\%, mean $\pm$ std over 3 seeds) for ResNet-20 trained with \ref{eq:usam} under three learning-rate schedules with $wd=0.0$. Best in bold.}
    \label{tab:usam_20_10_0.0}
    \centering
    \begin{tabular}{c|ccc}
        $wd=0.0$ & \textbf{Constant \ref{eq:usam}} & \textbf{\ref{eq:usam} with \ref{eq:ca}} & \textbf{\ref{eq:usam} with \ref{eq:usam-sps}} \\
        \hline
        $\r=0.1$ & $88.41_{\pm 0.11}$ & $87.50_{\pm 0.07}$ & $\mathbf{88.96_{\pm 0.12}}$ \\
        $\r=0.2$ & $88.13_{\pm 0.14}$ & $86.19_{\pm 0.26}$ & $\mathbf{89.40_{\pm 0.16}}$ \\
        $\r=0.3$ & $87.93_{\pm 0.06}$ & $85.04_{\pm 0.24}$ & $\mathbf{89.69_{\pm 0.11}}$ \\
        $\r=0.4$ & $87.65_{\pm 0.11}$ & $84.32_{\pm 0.20}$ & $\mathbf{89.64_{\pm 0.19}}$ \\
    \end{tabular}
\end{table}

\begin{table}[h]
    \caption{CIFAR-10 test accuracy (\%, mean $\pm$ std over 3 seeds) for ResNet-20 trained with \ref{eq:usam} under three learning-rate schedules with $wd=5\cdot10^{-4}$. Best in bold.}
    \label{tab:usam_20_10_0.5}
    \centering
    \begin{tabular}{c|ccc}
        $wd=5\cdot10^{-4}$ & \textbf{Constant \ref{eq:usam}} & \textbf{\ref{eq:usam} with \ref{eq:ca}} & \textbf{\ref{eq:usam} with \ref{eq:usam-sps}} \\
        \hline
        $\r=0.1$ & $89.34_{\pm 0.23}$ & $89.25_{\pm 0.28}$ & $\mathbf{91.16_{\pm 0.30}}$ \\
        $\r=0.2$ & $89.58_{\pm 0.12}$ & $88.00_{\pm 0.15}$ & $\mathbf{91.18_{\pm 0.04}}$ \\
        $\r=0.3$ & $89.18_{\pm 0.12}$ & $87.02_{\pm 0.07}$ & $\mathbf{90.65_{\pm 0.20}}$ \\
        $\r=0.4$ & $88.55_{\pm 0.18}$ & $85.81_{\pm 0.41}$ & $\mathbf{90.30_{\pm 0.20}}$ \\
    \end{tabular}
\end{table}

\begin{table}[h]
    \caption{CIFAR-100 test accuracy (\%, mean $\pm$ std over 3 seeds) for ResNet-20 trained with \ref{eq:usam} under three learning-rate schedules with $wd=0.0$. Best in bold.}
    \label{tab:usam_20_100_0.0}
    \centering
    \begin{tabular}{c|ccc}
        $wd=0.0$ & \textbf{Constant \ref{eq:usam}} & \textbf{\ref{eq:usam} with \ref{eq:ca}} & \textbf{\ref{eq:usam} with \ref{eq:usam-sps}} \\
        \hline
        $\r=0.1$ & $88.42_{\pm 0.23}$ & $87.61_{\pm 0.33}$ & $\mathbf{89.32_{\pm 0.32}}$ \\
        $\r=0.2$ & $88.45_{\pm 0.01}$ & $86.62_{\pm 0.02}$ & $\mathbf{89.36_{\pm 0.12}}$ \\
        $\r=0.3$ & $88.23_{\pm 0.02}$ & $85.66_{\pm 0.16}$ & $\mathbf{89.67_{\pm 0.04}}$ \\
        $\r=0.4$ & $87.47_{\pm 0.29}$ & $84.10_{\pm 0.39}$ & $\mathbf{89.67_{\pm 0.15}}$ \\
    \end{tabular}
\end{table}

\begin{table}[h]
    \caption{CIFAR-100 test accuracy (\%, mean $\pm$ std over 3 seeds) for ResNet-20 trained with \ref{eq:usam} under three learning-rate schedules with $wd=5\cdot10^{-4}$. Best in bold.}
    \label{tab:usam_20_100_0.5}
    \centering
    \begin{tabular}{c|ccc}
        $wd=5\cdot10^{-4}$ & \textbf{Constant \ref{eq:usam}} & \textbf{\ref{eq:usam} with \ref{eq:ca}} & \textbf{\ref{eq:usam} with \ref{eq:usam-sps}} \\
        \hline
        $\r=0.1$ & $89.63_{\pm 0.24}$ & $89.44_{\pm 0.17}$ & $\mathbf{90.94_{\pm 0.12}}$ \\
        $\r=0.2$ & $89.51_{\pm 0.03}$ & $88.16_{\pm 0.08}$ & $\mathbf{91.17_{\pm 0.23}}$ \\
        $\r=0.3$ & $89.18_{\pm 0.11}$ & $86.96_{\pm 0.27}$ & $\mathbf{90.82_{\pm 0.05}}$ \\
        $\r=0.4$ & $88.62_{\pm 0.13}$ & $86.13_{\pm 0.14}$ & $\mathbf{90.42_{\pm 0.06}}$ \\
    \end{tabular}
\end{table}

\begin{table}[h]
    \caption{CIFAR-10 test accuracy (\%, mean $\pm$ std over 3 seeds) for ResNet-20 trained with \ref{eq:nsam} under three learning-rate schedules with $wd=0.0$. Best in bold.}
    \label{tab:nsam_20_10_0.0}
    \centering
    \begin{tabular}{c|ccc}
        $wd=0.0$ & \textbf{Constant \ref{eq:nsam}} & \textbf{\ref{eq:nsam} with \ref{eq:ca}} & \textbf{\ref{eq:nsam} with \ref{eq:nsam-sps}} \\
        \hline
        $\r=0.1$ & $88.53_{\pm 0.14}$ & $87.63_{\pm 0.09}$ & $\mathbf{89.34_{\pm 0.07}}$ \\
        $\r=0.2$ & $88.31_{\pm 0.20}$ & $86.21_{\pm 0.47}$ & $\mathbf{89.93_{\pm 0.09}}$ \\
        $\r=0.3$ & $87.17_{\pm 0.11}$ & $83.66_{\pm 0.19}$ & $\mathbf{90.11_{\pm 0.18}}$ \\
        $\r=0.4$ & $85.57_{\pm 0.50}$ & $80.63_{\pm 0.27}$ & $\mathbf{89.81_{\pm 0.21}}$ \\
    \end{tabular}
\end{table}

\begin{table}[h]
    \caption{CIFAR-10 test accuracy (\%, mean $\pm$ std over 3 seeds) for ResNet-20 trained with \ref{eq:nsam} under three learning-rate schedules with $wd=5\cdot10^{-4}$. Best in bold.}
    \label{tab:nsam_20_10_0.5}
    \centering
    \begin{tabular}{c|ccc}
        $wd=5\cdot10^{-4}$ & \textbf{Constant \ref{eq:nsam}} & \textbf{\ref{eq:nsam} with \ref{eq:ca}} & \textbf{\ref{eq:nsam} with \ref{eq:nsam-sps}} \\
        \hline
        $\r=0.1$ & $89.58_{\pm 0.23}$ & $89.48_{\pm 0.04}$ & $\mathbf{90.94_{\pm 0.07}}$ \\
        $\r=0.2$ & $89.40_{\pm 0.31}$ & $88.16_{\pm 0.21}$ & $\mathbf{91.17_{\pm 0.08}}$ \\
        $\r=0.3$ & $88.68_{\pm 0.26}$ & $85.94_{\pm 0.02}$ & $\mathbf{90.22_{\pm 0.26}}$ \\
        $\r=0.4$ & $87.44_{\pm 0.23}$ & $83.65_{\pm 0.24}$ & $\mathbf{89.25_{\pm 0.17}}$ \\
    \end{tabular}
\end{table}

\begin{table}[h]
    \caption{CIFAR-100 test accuracy (\%, mean $\pm$ std over 3 seeds) for ResNet-20 trained with \ref{eq:nsam} under three learning-rate schedules with $wd=0.0$. Best in bold.}
    \label{tab:nsam_20_100_0.0}
    \centering
    \begin{tabular}{c|ccc}
        $wd=0.0$ & \textbf{Constant \ref{eq:nsam}} & \textbf{\ref{eq:nsam} with \ref{eq:ca}} & \textbf{\ref{eq:nsam} with \ref{eq:nsam-sps}} \\
        \hline
        $\r=0.1$ & $88.58_{\pm 0.08}$ & $87.88_{\pm 0.09}$ & $\mathbf{89.32_{\pm 0.34}}$ \\
        $\r=0.2$ & $88.54_{\pm 0.09}$ & $86.22_{\pm 0.24}$ & $\mathbf{89.98_{\pm 0.16}}$ \\
        $\r=0.3$ & $87.49_{\pm 0.35}$ & $84.28_{\pm 0.09}$ & $\mathbf{90.03_{\pm 0.14}}$ \\
        $\r=0.4$ & $86.05_{\pm 0.21}$ & $81.69_{\pm 0.42}$ & $\mathbf{90.33_{\pm 0.12}}$ \\
    \end{tabular}
\end{table}

\begin{table}[h]
    \caption{CIFAR-100 test accuracy (\%, mean $\pm$ std over 3 seeds) for ResNet-20 trained with \ref{eq:nsam} under three learning-rate schedules with $wd=5\cdot10^{-4}$. Best in bold.}
    \label{tab:nsam_20_100_0.5}
    \centering
    \begin{tabular}{c|ccc}
        $wd=5\cdot10^{-4}$ & \textbf{Constant \ref{eq:nsam}} & \textbf{\ref{eq:nsam} with \ref{eq:ca}} & \textbf{\ref{eq:nsam} with \ref{eq:nsam-sps}} \\
        \hline
        $\r=0.1$ & $89.56_{\pm 0.21}$ & $89.80_{\pm 0.13}$ & $\mathbf{91.15_{\pm 0.02}}$ \\
        $\r=0.2$ & $89.53_{\pm 0.25}$ & $88.13_{\pm 0.02}$ & $\mathbf{91.28_{\pm 0.09}}$ \\
        $\r=0.3$ & $88.92_{\pm 0.13}$ & $86.05_{\pm 0.37}$ & $\mathbf{90.39_{\pm 0.09}}$ \\
        $\r=0.4$ & $87.85_{\pm 0.18}$ & $84.31_{\pm 0.27}$ & $\mathbf{89.29_{\pm 0.08}}$ \\
    \end{tabular}
\end{table}

\begin{table}[h]
    \caption{CIFAR-10 test accuracy (\%, mean $\pm$ std over 3 seeds) for ResNet-32 trained with \ref{eq:usam} under three learning-rate schedules with $wd=0.0$. Best in bold.}
    \label{tab:usam_32_10_0.0}
    \centering
    \begin{tabular}{c|ccc}
        $wd=0.0$ & \textbf{Constant \ref{eq:usam}} & \textbf{\ref{eq:usam} with \ref{eq:ca}} & \textbf{\ref{eq:usam} with \ref{eq:usam-sps}} \\
        \hline
        $\r=0.1$ & $88.93_{\pm 0.14}$ & $87.88_{\pm 0.17}$ & $\mathbf{89.43_{\pm 0.15}}$ \\
        $\r=0.2$ & $88.97_{\pm 0.14}$ & $87.06_{\pm 0.19}$ & $\mathbf{89.72_{\pm 0.20}}$ \\
        $\r=0.3$ & $88.69_{\pm 0.21}$ & $85.69_{\pm 0.24}$ & $\mathbf{90.19_{\pm 0.05}}$ \\
        $\r=0.4$ & $88.24_{\pm 0.06}$ & $84.77_{\pm 0.19}$ & $\mathbf{90.26_{\pm 0.04}}$ \\
    \end{tabular}
\end{table}

\begin{table}[h]
    \caption{CIFAR-10 test accuracy (\%, mean $\pm$ std over 3 seeds) for ResNet-32 trained with \ref{eq:usam} under three learning-rate schedules with $wd=5\cdot10^{-4}$. Best in bold.}
    \label{tab:usam_32_10_0.5}
    \centering
    \begin{tabular}{c|ccc}
        $wd=5\cdot10^{-4}$ & \textbf{Constant \ref{eq:usam}} & \textbf{\ref{eq:usam} with \ref{eq:ca}} & \textbf{\ref{eq:usam} with \ref{eq:usam-sps}} \\
        \hline
        $\r=0.1$ & $90.60_{\pm 0.02}$ & $90.05_{\pm 0.08}$ & $\mathbf{91.73_{\pm 0.13}}$ \\
        $\r=0.2$ & $90.56_{\pm 0.02}$ & $88.96_{\pm 0.17}$ & $\mathbf{92.35_{\pm 0.26}}$ \\
        $\r=0.3$ & $90.05_{\pm 0.27}$ & $87.89_{\pm 0.22}$ & $\mathbf{92.13_{\pm 0.15}}$ \\
        $\r=0.4$ & $89.76_{\pm 0.19}$ & $86.79_{\pm 0.17}$ & $\mathbf{92.06_{\pm 0.10}}$ \\
    \end{tabular}
\end{table}

\begin{table}[h]
    \caption{CIFAR-100 test accuracy (\%, mean $\pm$ std over 3 seeds) for ResNet-32 trained with \ref{eq:usam} under three learning-rate schedules with $wd=0.0$. Best in bold.}
    \label{tab:usam_32_100_0.0}
    \centering
    \begin{tabular}{c|ccc}
        $wd=0.0$ & \textbf{Constant \ref{eq:usam}} & \textbf{\ref{eq:usam} with \ref{eq:ca}} & \textbf{\ref{eq:usam} with \ref{eq:usam-sps}} \\
        \hline
        $\r=0.1$ & $89.11_{\pm 0.07}$ & $88.27_{\pm 0.16}$ & $\mathbf{89.47_{\pm 0.15}}$ \\
        $\r=0.2$ & $88.91_{\pm 0.20}$ & $86.81_{\pm 0.14}$ & $\mathbf{89.81_{\pm 0.23}}$ \\
        $\r=0.3$ & $88.65_{\pm 0.06}$ & $85.76_{\pm 0.18}$ & $\mathbf{90.24_{\pm 0.13}}$ \\
        $\r=0.4$ & $88.02_{\pm 0.16}$ & $84.49_{\pm 0.32}$ & $\mathbf{90.51_{\pm 0.11}}$ \\
    \end{tabular}
\end{table}

\begin{table}[h]
    \caption{CIFAR-10 test accuracy (\%, mean $\pm$ std over 3 seeds) for ResNet-32 trained with \ref{eq:nsam} under three learning-rate schedules with $wd=0.0$. Best in bold.}
    \label{tab:nsam_32_10_0.0}
    \centering
    \begin{tabular}{c|ccc}
        $wd=0.0$ & \textbf{Constant \ref{eq:nsam}} & \textbf{\ref{eq:nsam} with \ref{eq:ca}} & \textbf{\ref{eq:nsam} with \ref{eq:nsam-sps}} \\
        \hline
        $\r=0.1$ & $89.28_{\pm 0.06}$ & $88.28_{\pm 0.22}$ & $\mathbf{89.85_{\pm 0.09}}$ \\
        $\r=0.2$ & $88.92_{\pm 0.01}$ & $86.75_{\pm 0.14}$ & $\mathbf{90.23_{\pm 0.13}}$ \\
        $\r=0.3$ & $87.79_{\pm 0.38}$ & $84.73_{\pm 0.19}$ & $\mathbf{90.61_{\pm 0.15}}$ \\
        $\r=0.4$ & $86.34_{\pm 0.12}$ & $80.61_{\pm 0.78}$ & $\mathbf{91.00_{\pm 0.12}}$ \\
    \end{tabular}
\end{table}

\begin{table}[h]
    \caption{CIFAR-10 test accuracy (\%, mean $\pm$ std over 3 seeds) for ResNet-32 trained with \ref{eq:nsam} under three learning-rate schedules with $wd=5\cdot10^{-4}$. Best in bold.}
    \label{tab:nsam_32_10_0.5}
    \centering
    \begin{tabular}{c|ccc}
        $wd=5\cdot10^{-4}$ & \textbf{Constant \ref{eq:nsam}} & \textbf{\ref{eq:nsam} with \ref{eq:ca}} & \textbf{\ref{eq:nsam} with \ref{eq:nsam-sps}} \\
        \hline
        $\r=0.1$ & $90.56_{\pm 0.20}$ & $90.27_{\pm 0.35}$ & $\mathbf{91.51_{\pm 0.28}}$ \\
        $\r=0.2$ & $90.40_{\pm 0.21}$ & $88.99_{\pm 0.18}$ & $\mathbf{92.12_{\pm 0.12}}$ \\
        $\r=0.3$ & $89.58_{\pm 0.07}$ & $86.75_{\pm 0.17}$ & $\mathbf{91.79_{\pm 0.22}}$ \\
        $\r=0.4$ & $88.26_{\pm 0.38}$ & $84.21_{\pm 0.22}$ & $\mathbf{90.87_{\pm 0.12}}$ \\
    \end{tabular}
\end{table}

\begin{table}[h]
    \caption{CIFAR-100 test accuracy (\%, mean $\pm$ std over 3 seeds) for ResNet-32 trained with \ref{eq:nsam} under three learning-rate schedules with $wd=0.0$. Best in bold.}
    \label{tab:nsam_32_100_0.0}
    \centering
    \begin{tabular}{c|ccc}
        $wd=0.0$ & \textbf{Constant \ref{eq:nsam}} & \textbf{\ref{eq:nsam} with \ref{eq:ca}} & \textbf{\ref{eq:nsam} with \ref{eq:nsam-sps}} \\
        \hline
        $\r=0.1$ & $88.78_{\pm 0.22}$ & $88.30_{\pm 0.12}$ & $\mathbf{89.80_{\pm 0.13}}$ \\
        $\r=0.2$ & $89.10_{\pm 0.04}$ & $86.93_{\pm 0.19}$ & $\mathbf{90.30_{\pm 0.16}}$ \\
        $\r=0.3$ & $88.19_{\pm 0.14}$ & $84.53_{\pm 0.29}$ & $\mathbf{90.54_{\pm 0.09}}$ \\
        $\r=0.4$ & $86.50_{\pm 0.06}$ & $82.31_{\pm 0.23}$ & $\mathbf{90.83_{\pm 0.15}}$ \\
    \end{tabular}
\end{table}

\end{document}